\documentclass[a4paper,12pt]{amsart}

\usepackage{latexsym}
\usepackage{amssymb}
\usepackage{graphicx}
\usepackage{amsthm}
\usepackage{epsfig}
\RequirePackage{color}

\usepackage{amscd,enumerate}
\usepackage{amsfonts}

\input xy
\xyoption{all}

\usepackage{multicol}
\usepackage{hyperref}
\hypersetup{ colorlinks,
linkcolor=blue,
filecolor=green,
urlcolor=blue,
citecolor=blue }

\addtolength{\textwidth}{4cm} \addtolength{\oddsidemargin}{-2cm} \addtolength{\evensidemargin}{-2cm}
\textheight=22.15truecm

\setcounter{secnumdepth}{4}

\newtheorem{lemma}{Lemma}[section]
\newtheorem{corollary}[lemma]{Corollary}
\newtheorem{theorem}[lemma]{Theorem}
\newtheorem{proposition}[lemma]{Proposition}
\newtheorem{remark}[lemma]{Remark}
\newtheorem{definition}[lemma]{Definition}
\newtheorem{definitions}[lemma]{Definitions}
\newtheorem{example}[lemma]{Example}
\newtheorem{examples}[lemma]{Examples}

\newtheorem{notation}[lemma]{Notation}

\newcommand{\C}{{\mathcal{C}}}

\definecolor{turquoise2}{rgb}{0,0.898039,0.933333}
\definecolor{magenta}{rgb}{1,0,1}

\begin{document}

\subjclass[2010]{Primary 16D70} \keywords{Leavitt path algebra, center, socle, extreme cycle, cycle, line point}

\title[Extreme cycles. The center of a Leavitt path algebra]{Extreme cycles. The center of a Leavitt path algebra}

\author[M. G. Corrales]{Mar\'{\i}a G. Corrales Garc\'{\i}a}
\address{M. G. Corrales Garc\'{\i}a:  Centro Regional Universitario de Cocl\'e: \lq\lq Dr. 
Bernardo Lombardo\rq\rq, Universidad de  Panam\'a.  Apartado Postal 0229. Penonom\'e, 
Provincia de Cocl\'e. Panam\'a.}
\email{mcorrales@ancon.up.ac.pa}
\author[D. Mart\'{\i}n]{Dolores Mart\'{\i}n Barquero}
\address{D. Mart\'{\i}n Barquero: Departamento de Matem\'atica Aplicada, Escuela T\'ecnica Superior de Ingenieros Industriales, Universidad de M\'alaga. 29071 M\'alaga. Spain.}
\email{dmartin@uma.es}

\author[C. Mart\'{\i}n]{C\'andido Mart\'{\i}n Gonz\'alez}
\address{C. Mart\'{\i}n Gonz\'alez:  Departamento de \'Algebra Geometr\'{\i}a y Topolog\'{\i}a, Fa\-cultad de Ciencias, Universidad de M\'alaga, Campus de Teatinos s/n. 29071 M\'alaga. Spain.}
\email{candido@apncs.cie.uma.es}

\author[M. Siles ]{Mercedes Siles Molina}
\address{M. Siles Molina: Departamento de \'Algebra Geometr\'{\i}a y Topolog\'{\i}a, Fa\-cultad de Ciencias, Universidad de M\'alaga, Campus de Teatinos s/n. 29071 M\'alaga.   Spain.}
\email{msilesm@uma.es}

\author[J. F. Solanilla]{Jos\'e F. Solanilla Hern\'andez}
\address{J. F. Solanilla Hern\'andez:  Centro Regional Universitario de Cocl\'e: \lq\lq Dr. 
Bernardo Lombardo\rq\rq,  Universidad de  Panam\'a. Apartado Postal 0229. Penonom\'e, 
Provincia de Cocl\'e. Panam\'a.}
\email{jose.solanilla@up.ac.pa}

\begin{abstract} In this paper we introduce new techniques in order to deepen into the structure of a Leavitt path algebra with the aim of giving a description of the center. Extreme cycles appear for the first time; they concentrate the purely infinite part of a Leavitt path algebra and, jointly with the line points and vertices in cycles without exits, are the key ingredients in order to determine the center of a Leavitt path algebra. Our work will rely on our previous approach to the center of a prime Leavitt path algebra \cite{CMMSS1}. We will go further into the structure itself of the Leavitt path algebra. For example, the ideal  $I(P_{ec} \cup P_{c} \cup P_l)$ generated by vertices in extreme cycles ($P_{ec}$), by vertices in cycles without exits ($P_c$) and by line points ($P_l$) will be a dense ideal in some cases, for instance in the finite one or, more generally, if every vertex connects to $P_l \cup P_c\cup P_{ec}$. Hence its structure will contain much of the information about the Leavitt path algebra. In the row-finite case, we will need to add a new hereditary set: the set of vertices whose tree has infinite bifurcations ($P_{b^\infty}$).
\end{abstract}
\maketitle

\section{Introduction and preliminary results}

When trying to determine the structure of a Leavitt path algebra $L_K(E)$, one can realize that two essential pieces appear. These are the sets of line points, $P_l(E)$, which are the vertices whose tree does not contain neither bifurcations nor cycles, and the set of vertices in cycles without exits, $P_c(E)$. 

The ideal generated by $P_l(E)$, isomorphic to a direct sum of matrix rings over $K$, is precisely the socle of the Leavitt path algebra (this was studied in \cite{AMMS1, AMMS2, ARS}), so it contains the locally  artinian side of the Leavitt path algebra. On the other hand, $P_c(E)$ contains the information about the locally noetherian character of the Leavitt path algebra: the ideal generated by $P_c(E)$ is isomorphic to a direct sum of matrix rings over $K[x, x^{-1}]$; this was determined in \cite{AAPS, ABS}. 

There is however a third ingredient whose presence could be guessed but which was immaterial until now: the purely infinite heart of the Leavitt path algebra. 

In this paper we introduce the notion of extreme cycle: a cycle with exits such that every path starting at the cycle connects to the cycle, and show that the ideal generated by the set $P_{ec}(E)$ of vertices in these cycles is a direct sum of purely infinite simple rings (see Section \ref{extreme}).

In Leavitt path algebras, density of an ideal generated by a hereditary subset of vertices, say $H$, can be translated graphically: every vertex of the graph connects to a vertex in $H$, as shown in Section 1 (Proposition \ref{density}).
Section 1 is also devoted to study ideals generated by the union and the intersection of hereditary subsets. 

We will see that $P_l(E)$, $P_c(E)$ and $P_{ec}(E)$ are the three primary colors of the center of a Leavitt path algebra and our intuition says that their importance goes further. This comes out also in Corollary \ref{structurePrime}, where we see that when a Leavitt path algebra $L_K(E)$ coming from a graph with a finite number of vertices is prime, every vertex connects to one and only one of the sets $P_l(E)$, $P_c(E)$ or $P_{ec}(E)$. In each case $I(P_l(E))$, $I(P_c(E))$ or $I(P_{ec}(E))$ is a dense ideal of $L_K(E)$, hence they contain the essential information about the Leavitt path algebra. The ideal  $I(P_{ec} \cup P_{c} \cup P_l)$ will be dense in some cases, for instance in the finite one or, more generally, if every vertex connects to $P_l \cup P_c\cup P_{ec}$. For row-finite graphs every vertex will connect to $P_l \cup P_c\cup P_{ec}\cup P_{b^\infty}$, hence the ideal it generates is dense.

The set $P=P_l(E)\cup P_c(E) \cup P_{ec}(E)$ determines if there exists nontrivial center in the Leavitt path algebra $L_K(E)$, but not only, because the center is related to finite subgraphs of $E$, and more concretely,  the cardinal of the equivalence classes determined by the relation given in Definitions \ref{classextendido} will provide the cardinal of the nonzero components of the center.

Concretely we show that for every row-finite graph $E$ and every field $K$,

$$Z(L_K(E))\cong K^{\vert{X}^l_f\vert} \oplus K^{\vert {X}^{ec}_f\vert}\oplus K[x, x^{-1}]^{\vert{X}^c_f\vert},$$

\noindent where ${X}^l_f, {X}^{ec}_f$ and ${X}^c_f$ are certain subsets of classes of elements in $P_l\cup P_c \cup P_{ec}$.

We give a basis for the center of the Leavitt path algebra of a row-finite graph and compute the extended centroid of the Leavitt path algebra of a finite graph.

\medskip

Now, we start with the basic definitions.

\medskip

A \emph{directed graph} is a 4-tuple $E=(E^0, E^1, r_E, s_E)$ consisting of two disjoint sets $E^0$, $E^1$ and two maps
$r_E, s_E: E^1 \to E^0$. The elements of $E^0$ are called the \emph{vertices} of $E$ and the elements of $E^1$ the edges of $E$ while for
$e\in E^1$, $r_E(e)$ and $s_E(e)$ are called the \emph{range} and the \emph{source} of $e$, respectively. If there is no confusion with respect to the graph we are considering, we simply write $r(e)$ and $s(e)$.

Given a (directed) graph $E$ and a field $K$, the {\it path $K$-algebra} of $E$,
denoted by $KE$ is defined as the free associative $K$-algebra generated by the
set of paths of $E$ with relations:
\begin{enumerate}
\item[(V)] $vw= \delta_{v,w}v$ for all $v,w\in E^0$.
\item [(E1)] $s(e)e=er(e)=e$ for all $e\in E^1$.
\end{enumerate}

 If
$s^{-1}(v)$ is a finite set for every $v\in E^0$, then the graph is called \emph{row-finite}. If
$E^0$ is finite and $E$ is row-finite, { then} $E^1$ must necessarily be finite as well; in this case we
say simply that $E$ is \emph{finite}.

A vertex which emits no edges is called a \emph{sink}. 
A vertex $v$ is called an \emph{infinite emitter} if $s^{-1}(v)$ is an infinite set, and a \emph{regular vertex} otherwise. 
The  set of infinite emitters {will} be denoted by $ E_{inf}^0$ while ${\rm Reg}(E)$ will denote the set of regular vertices.

The  {\it extended graph of} $E$ is defined as the new graph $\widehat{E}=(E^0,E^1\cup (E^1)^*, r_{\widehat{E}}, s_{\widehat{E}}),$ where
$(E^1)^*=\{e_i^* \ | \ e_i\in  E^1\}$ and the functions $r_{\widehat{E}}$ and $s_{\widehat{E}}$ are defined as 
$${r_{\widehat{E}}}_{|_{E^1}}=r,\ {s_{\widehat{E}}}_{|_{E^1}}=s,\
r_{\widehat{E}}(e_i^*)=s(e_i), \hbox{ and }  s_{\widehat{E}}(e_i^*)=r(e_i).$$

\noindent
The elements of $E^1$ will be called \emph{real edges}, while for $e\in E^1$ we will call $e^\ast$ a
\emph{ghost edge}.    
\medskip

The {\it Leavitt path algebra of} $E$ {\it with coefficients in} $K$, denoted $L_K(E)$, is the quotient of the path algebra $K\widehat{E}$ by the ideal of $K\widehat{E}$ generated by the relations:

\begin{enumerate}    
\item[(CK1)] $e^*e'=\delta _{e,e'}r(e) \ \mbox{ for all } e,e'\in E^1$.
\item[(CK2)] $v=\sum _{\{ e\in E^1\mid s(e)=v \}}ee^* \ \ \mbox{ for every}\ \ v\in  {\rm Reg}(E).$
\end{enumerate}

Observe that in $K\widehat{E}$ the relations (V) and (E1) remain valid and that the following is also satisfied:

\begin{enumerate}
\item [(E2)] $r(e)e^*=e^*s(e)=e^*$ for all $e\in E^1$.
\end{enumerate}
\medskip

 Note that if $E$ is a finite graph, then
$L_{K}(E)$ is unital with $\sum _{v\in E^0} v=1_{L_{K}(E)}$; otherwise, $L_{K}(E)$
is a ring with a set of local units consisting of sums of distinct vertices (for a ring $R$
the assertion \emph{$R$ has local units} means that each
finite subset of $R$ is contained in a \emph{corner} of $R$, that
is, a subring of the form $e Re$ where $e$ is an idempotent of $R$).
 Note that since every Leavitt path algebra $L_{K}(E)$ has
local units, it is the directed union of its corners.

A \emph{path} $\mu$ in a graph $E$ is a finite sequence of edges $\mu=e_1\dots e_n$
such that $r(e_i)=s(e_{i+1})$ for $i=1,\dots,n-1$. In this case, $s(\mu):=s(e_1)$ and $r(\mu):=r(e_n)$ are the
\emph{source} and \emph{range} of $\mu$, respectively, and $n$ is the \emph{length} of $\mu$. We also say that
$\mu$ is \emph{a path from $s(e_1)$ to $r(e_n)$} and denote by $\mu^0$ the set of its vertices, i.e.,
$\mu^0:=\{s(e_1),r(e_1),\dots,r(e_n)\}$. By $\mu^1$ we denote the set of edges appearing in $\mu$, i.e., $\mu^1:=\{e_1,\dots, e_n\}$.

We view the elements of $E^{0}$ as paths of length $0$. The set of all paths of a graph $E$ is denoted by ${\rm Path}(E)$.

The Leavitt path algebra $L_{K}(E)$ is a
$\mathbb{Z}$-graded $K$-algebra, spanned as a $K$-vector space by
$\{\alpha\beta^{\ast } \ \vert \ \alpha, \beta \in {\rm Path}(E)\}$. In particular, for each $n\in\mathbb{Z}$,
the degree $n$ component $L_{K}(E)_{n}$ is spanned by the set 
$\{\alpha \beta^{\ast }\ \vert \  \alpha, \beta \in {\rm Path}(E)\ \hbox{and}\  \mathrm{length}(\alpha)-\mathrm{length}(\beta)=n\}$. Denote by $h(L_K(E))$ the
set of all homogeneous elements in $L_K(E)$, that is, 
$$h(L_K(E)):= \cup_{n\in\mathbb{Z}}L_{K}(E)_{n}.$$

\smallskip

If $\mu$ is a path in $E$, and if $v=s(\mu)=r(\mu)$, then $\mu$ is called a \emph{closed path based
at $v$}. If $s(\mu)=r(\mu)$
and $s(e_i)\neq s(e_j)$ for every $i\neq j$, then $\mu$ is called a \emph{cycle}. A graph which
contains no cycles is called
\emph{acyclic}.  For $\mu = e_1 \dots e_n\in {\rm Path}(E)$ we write $\mu^*$ for the element $e_n^* \dots e_1^*$ of $L_{K}(E)$.

An edge $e$ is an {\it exit} for a path $\mu = e_1 \dots e_n$ if there exists $i\in \{1, \dots, n\}$ such that
$s(e)=s(e_i)$ and $e\neq e_i$. We say that $E$ satisfies \emph{Condition} (L) if every cycle in
$E$ has an exit. 
We denote by $P_c(E)$ ($P_c$ if there is no confusion about the graph)  the set of vertices of a graph $E$ lying in cycles without exits, and decompose it as:
$P_c(E)=P_c(E)^+ \sqcup P_c(E)^-$, where $P_c(E)^+$ are those elements of $c^0$, for $c$ a cycle without exits, such that
the number of paths ending at a vertex of $c^0$ and not containing all the edges of $c$ is infinite,  
 $P_c(E)^- =  P_c(E)\setminus  P_c(E)^+$ and $\sqcup$ denotes the disjoint union. If it is clear from the context the graph we are referring to, we will write simply $P_c^+$ or $P_c^-$.

Finally, given paths $\alpha, \beta$, we say $\alpha \leq \beta$ if $\beta =\alpha\alpha'$ for some path $\alpha'$.

Let $X$ be a subset of $E^0$. A \emph{path in $X$} is a path $\alpha$ in $E$ with
$\alpha^0\subseteq X$. We say that a path $\alpha$ in $X$ has \emph{an exit in $X$} if there
exists $e\in E^1$ { which is}  an exit for $\alpha$ { and} such that $r(e)\in X$.

\medskip

We define a relation $\ge$ on $E^0$ by setting $v\ge w$ if there exists a path in $E$ from $v$ to
$w$. A subset $H$ of $E^0$ is called \emph{hereditary} if $v\ge w$ and $v\in H$ imply $w\in H$. A
hereditary set is \emph{saturated} if every regular vertex which feeds into $H$ and only into $H$ is again
in $H$, that is, if $s^{-1}(v)\neq \emptyset$ is finite and $r(s^{-1}(v))\subseteq H$ imply $v\in H$. Denote
by $\mathcal{H}_E$ the set of hereditary saturated subsets of $E^0$.
\medskip

Hereditary and saturated subsets of vertices play an important role in the theory of Leavitt path algebras; in fact, they are closely related to graded ideals of Leavitt path algebras, as was highlighted for the first time in \cite{AMP}; and also to ideals since every ideal $I$ in a Leavitt path algebra $L_K(E)$ contains a graded part, the ideal generated by $I\cap E^0$ (see \cite[Theorem 2.8.6]{AAS}).
For a graph $E$ the ideal generated in $L_K(E)$ by a subset $X$ of vertices in $E^0$, denote it by $I(X)$, is a graded ideal, as it is generated by elements of degree zero. Moreover, if $E$ is a row-finite graph every graded ideal $J$ of $L_K(E)$ is $I(H)$ for $H$ a hereditary and saturated subset of $E^0$; concretely, $H=J\cap E^0$ (see \cite[Lemma 2.1 and Remark 2.2]{APS1}). Although not every hereditary subset has to be saturated, 
hereditary subsets (much more easy to get) give important information about the Leavitt path algebra.
\medskip

Whenever $X$ is a  set of vertices of a graph $E$, the \emph{saturated closure} of $X$ is defined as $\cup_{i\in \mathbb{N}}\Lambda_i(X)$, where $\Lambda_0(X)=X$ and by recurrence 
$\Lambda_i(X)= \Lambda_{i-1}(X)\cup \{v\in  {\rm Reg}(E)\ \vert \ r(s^{-1}(v))\in \Lambda_{i-1}(X)\}$. In particular, 
for a hereditary subset of vertices, say $H$, this saturated closure is hereditary and saturated and is denoted by $\overline H$. 

For $X$ a subset of vertices in a graph $E$, the \emph{hereditary closure} of $X$ is defined as the minimum hereditary subset of $E^0$ containing $X$. It always exists because is just the intersection of all hereditary subsets of $E^0$ which contains $X$. The \emph{hereditary and saturated closure} of a set of vertices is defined as the saturated closure of the hereditary closure.

Results that will be very useful are the following.

\begin{lemma}\label{Interseccion}
Let $E$ be an arbitrary graph and let $H_1, H_2$ be non empty hereditary subsets of vertices of $E$. Then:
\begin{enumerate}[\rm (i)]
\item $\Lambda_m(H_i)$ is hereditary for every $m\in \mathbb{N}$.
\item $\overline {H_1} \cap \overline {H_2}=\overline{H_1\cap H_2}$. 
\end{enumerate}
\end{lemma}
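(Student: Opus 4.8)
The plan is to treat the two parts separately, establishing (i) first, since the closure $\overline{H}$ is assembled from the sets $\Lambda_m(H)$ and (i) is used throughout (ii). For (i) I would induct on $m$. The base case $\Lambda_0(H_i)=H_i$ is hereditary by hypothesis. Assuming $\Lambda_{m-1}(H_i)$ is hereditary, take $v\in\Lambda_m(H_i)$ and $w$ with $v\ge w$. If $v\in\Lambda_{m-1}(H_i)$, heredity of the latter gives $w\in\Lambda_{m-1}(H_i)\subseteq\Lambda_m(H_i)$. Otherwise $v\in{\rm Reg}(E)$ with $r(s^{-1}(v))\subseteq\Lambda_{m-1}(H_i)$; choosing a path realizing $v\ge w$, either $w=v$, or its first edge $e$ satisfies $r(e)\in r(s^{-1}(v))\subseteq\Lambda_{m-1}(H_i)$ and $r(e)\ge w$, so again $w\in\Lambda_{m-1}(H_i)\subseteq\Lambda_m(H_i)$.

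For (ii), my first move is to record that $\overline{H}$ is the least element of $\mathcal{H}_E$ containing $H$. By (i) it is an increasing union of hereditary sets, hence hereditary; it is saturated because for a regular vertex $v$ the finite set $r(s^{-1}(v))\subseteq\overline{H}$ already sits in some $\Lambda_N(H)$, forcing $v\in\Lambda_{N+1}(H)$; minimality follows by showing $\Lambda_m(H)\subseteq K$ for any $K\in\mathcal{H}_E$ with $H\subseteq K$, by induction on $m$. Granting this, the inclusion $\overline{H_1\cap H_2}\subseteq\overline{H_1}\cap\overline{H_2}$ is immediate: $H_1\cap H_2$ is hereditary, the set $\overline{H_1}\cap\overline{H_2}$ is again hereditary and saturated (both properties pass to intersections) and contains $H_1\cap H_2$, so it contains its closure.

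The reverse inclusion $\overline{H_1}\cap\overline{H_2}\subseteq\overline{H_1\cap H_2}$ is the heart of the matter, and I would prove it by induction on $\ell(v)=\ell_1(v)+\ell_2(v)$, where $\ell_i(v)$ is the least index with $v\in\Lambda_{\ell_i(v)}(H_i)$. Writing $K=\overline{H_1\cap H_2}$, the base case $\ell_1(v)=\ell_2(v)=0$ gives $v\in H_1\cap H_2\subseteq K$. If, say, $\ell_1(v)=0<\ell_2(v)=n$, then $v\in H_1$ is regular with $s^{-1}(v)\neq\emptyset$ and $r(s^{-1}(v))\subseteq\Lambda_{n-1}(H_2)$; heredity of $H_1$ also gives $r(s^{-1}(v))\subseteq H_1$, so each such $r(e)$ lies in $\overline{H_1}\cap\overline{H_2}$ with strictly smaller $\ell$, hence in $K$ by induction, and saturation of $K$ yields $v\in K$. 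The symmetric case, and the case $\ell_1(v),\ell_2(v)>0$ where $r(s^{-1}(v))\subseteq\Lambda_{\ell_1(v)-1}(H_1)\cap\Lambda_{\ell_2(v)-1}(H_2)$, run identically.

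I expect the main obstacle to be exactly this double bookkeeping: the argument works only because at a saturation node both indices $\ell_1$ and $\ell_2$ of the children drop simultaneously, so that $r(s^{-1}(v))$ lands inside the single set $K$ and saturation of $K$ can be invoked; tracking one closure index alone would not suffice. The one convention I would use consistently is that the saturation step concerns only regular vertices with $s^{-1}(v)\neq\emptyset$, so that sinks never enter the inductive step spuriously and the finiteness of $r(s^{-1}(v))$ is available exactly where it is needed.
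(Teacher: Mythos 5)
Your proposal is correct, and part (i) is essentially the paper's own argument (induction on $m$, splitting on whether $v\in\Lambda_{m-1}(H_i)$ or $v$ entered via the saturation step). For part (ii), however, you take a genuinely different route. The paper strengthens the claim to the level-matched identity $\Lambda_m(H_1)\cap\Lambda_m(H_2)=\Lambda_m(H_1\cap H_2)$ and proves it by induction on the \emph{single} index $m$: the containment $\supseteq$ comes from (i), and for $\subseteq$ it uses heredity of $\Lambda_{m-1}(H_j)$ to convert ``$u\in\Lambda_{m-1}(H_j)$'' into ``$r(s^{-1}(u))\subseteq\Lambda_{m-1}(H_j)$'' on both sides, then applies the induction hypothesis; the equality of closures follows by taking the increasing unions. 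You instead first establish the universal property of $\overline{H}$ as the least hereditary saturated set containing $H$ (which the paper asserts elsewhere without proof), getting the forward inclusion for free, and then prove the reverse inclusion by a well-founded induction on $\ell_1(v)+\ell_2(v)$, feeding $r(s^{-1}(v))$ directly into the saturation of $K=\overline{H_1\cap H_2}$. Each approach buys something: the paper's yields the finer per-level equality, potentially reusable elsewhere, while yours isolates exactly where saturation of the closure is invoked and, thanks to your explicitly stated convention that the saturation step concerns only regular vertices with $s^{-1}(v)\neq\emptyset$, cleanly covers the degenerate case where the vertex under consideration is a sink or infinite emitter --- a case the paper's final step ``so $u\in\Lambda_m(H_1\cap H_2)$'' silently elides (there one must observe $u\in H_1\cap H_2$ directly, since the saturation rule cannot add such a vertex). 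One small caveat: your closing remark that tracking one closure index alone ``would not suffice'' overstates matters --- the paper's proof does precisely that, at the price of strengthening the induction statement to the matched-level equality rather than aiming straight at membership in $K$.
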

\begin{proof}
(i). For $m=0$ the result  is trivial. Suppose the result true for $m-1$ and let us show it for $m$. Take $u\in \Lambda_m(H_i)$ and let $e\in E^1$ be such that $s(e)=u$. Then $r(e)\in r(s^{-1}(u))\subseteq \Lambda_{m-1}(H_i)\subseteq \Lambda_{m}(H_i)$. This implies the result.

(ii). It is immediate to see $\overline{H_1\cap H_2}\subseteq \overline {H_1} \cap \overline {H_2}$. For the converse we will prove: $\Lambda_m(H_1)\cap \Lambda_m(H_2)= \Lambda_m(H_1\cap H_2)$. Note that the first observation implies 
$\Lambda_m(H_1)\cap \Lambda_m(H_2)\supseteq \Lambda_m(H_1\cap H_2)$. For the converse containment, use induction. If $m=0$ then the result is trivially true. Suppose our assertion is true for $m-1$ and show it for $m$. If $u\in \Lambda_m(H_1)\cap \Lambda_m(H_2)$ then $u\in \Lambda_{m-1}(H_1)$ or
$r(s^{-1}(u))\subseteq \Lambda_{m-1}(H_1)$. In the first case, and since $\Lambda_{m-1}(H_1)$ is hereditary (by (i)) we have also $r(s^{-1}(u))\subseteq \Lambda_{m-1}(H_1)$. Analogously we prove $r(s^{-1}(u))\subseteq \Lambda_{m-1}(H_2)$. This means $r(s^{-1}(u))\subseteq \Lambda_{m-1}(H_1)\cap \Lambda_{m-1}(H_2)= \Lambda_{m-1}(H_1\cap H_2)$ (by the induction hypothesis) and so $u\in \Lambda_{m}(H_1\cap H_2)$.
\end{proof}

\begin{lemma}\label{caminillos}
Let $E$ be a graph and $H$ a hereditary subset of $E^0$. Then, for every $v\in \overline H$ there exists a finite number of paths $\alpha_1,\dots, \alpha_n$ satisfying $r(\alpha_i)\in H$ and
$v=\sum_{i=1}^n\alpha_i\alpha_i^*$.
\end{lemma}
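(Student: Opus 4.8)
The plan is to run an induction on the least integer $m$ with $v\in\Lambda_m(H)$, which exists because $\overline H=\cup_{i\in\mathbb{N}}\Lambda_i(H)$. To make the induction self-sustaining I would prove the slightly stronger statement that the paths $\alpha_i$ can in addition be chosen with $s(\alpha_i)=v$; this is exactly what lets the expansion of a vertex be fed into the expansion of the vertices one level below it.

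For the base case $m=0$ one has $v\in H$, and it suffices to take the single path $\alpha_1=v$ of length zero: then $r(\alpha_1)=v\in H$ and, since $v^*=v$ and $v$ is idempotent by (V), $\alpha_1\alpha_1^*=v$. For the inductive step, take $v\in\Lambda_m(H)\setminus\Lambda_{m-1}(H)$. By definition of $\Lambda_m$ this forces $v\in{\rm Reg}(E)$ with $r(s^{-1}(v))\subseteq\Lambda_{m-1}(H)$, so $s^{-1}(v)=\{e_1,\dots,e_k\}$ is finite and nonempty and (CK2) gives $v=\sum_{j=1}^k e_je_j^*$. Each $r(e_j)$ lies in $\Lambda_{m-1}(H)$, so the induction hypothesis supplies finitely many paths $\beta_{j,1},\dots,\beta_{j,n_j}$ with $s(\beta_{j,l})=r(e_j)$, $r(\beta_{j,l})\in H$ and $r(e_j)=\sum_{l}\beta_{j,l}\beta_{j,l}^*$. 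Inserting $e_j=e_jr(e_j)$ (by (E1)) into each summand, I would compute $e_je_j^*=e_j\bigl(\sum_l\beta_{j,l}\beta_{j,l}^*\bigr)e_j^*=\sum_l(e_j\beta_{j,l})(e_j\beta_{j,l})^*$, using that $r(e_j)=s(\beta_{j,l})$ makes $e_j\beta_{j,l}$ a genuine path and that $(e_j\beta_{j,l})^*=\beta_{j,l}^*e_j^*$. Summing over $j$ then exhibits $v$ as a finite sum of terms $\alpha\alpha^*$ with $\alpha=e_j\beta_{j,l}$, which satisfy $s(\alpha)=v$ and $r(\alpha)=r(\beta_{j,l})\in H$, closing the induction.

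I do not expect a serious obstacle here; the argument is essentially bookkeeping built on (CK2). The one point that needs care — and the reason for strengthening the statement — is the matching of sources and ranges when substituting the lower-level expansions into the edges $e_j$: one must know that the paths produced at level $m-1$ emanate from $r(e_j)$, so that the concatenations $e_j\beta_{j,l}$ are defined and the Cuntz--Krieger computation telescopes correctly. Keeping the auxiliary condition $s(\alpha_i)=v$ in the inductive statement makes this transparent and is the only subtlety worth flagging.
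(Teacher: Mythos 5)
Your proof is correct and follows essentially the same route as the paper: induction along the filtration $\overline H=\cup_m\Lambda_m(H)$, with the base case $v\in H$ handled by the trivial path and the inductive step performed by substituting the expansions of the $r(e_j)$ into the (CK2) relation $v=\sum_j e_je_j^*$. Your auxiliary strengthening $s(\alpha_i)=v$ is implicit in the paper's argument (its paths are concatenations $f\beta_i^f$ emanating from $v$) and is a harmless, indeed clarifying, way to make the bookkeeping explicit.
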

\begin{proof} For $v$ in $H$ we get immediately the result.
Take $v$ in $\Lambda_1(H)$ not being a sink. Then $v=\sum_{f\in s^{-1}(v)} ff^*$ and we have the claim. Suppose the result true for every $u\in \Lambda_{i-1}(H)$ and take $v\in \Lambda_i(H)$. Then, for every $f\in s^{-1}(v)$, since $r(f)\in r(s^{-1})(v)\subseteq \Lambda_{i-1}(H)$, by the induction hypothesis,  there exists a finite number of paths $\beta_1^f, \dots, \beta_m^f$ in ${\rm Path}(E)$ such that $r(f)=\sum_i\beta_i^f(\beta_i^f)^*$ and $r(\beta_i^f)\in H$. Hence $v=\sum_{f\in s^{-1}(v)}ff^*= \sum_f f(\sum_i\beta_i^f(\beta_i^f)^*)f^*
= \sum_{f, i}f\beta_i^f(\beta_i^f)^*f^*$ and we have finished.
\end{proof}

The following result was stated in \cite[Proposition 3.1]{AMMS1} (althought in that paper the statement is slightly different); it  has proved to be very useful in many different contexts, for example in order to get the Uniqueness Theorems (see \cite[Theorem 3.5]{AMMS1}). In this paper we also find another context where it can be used. 
\medskip

Except otherwise stated, $E$ will denote an arbitrary graph and $K$ an arbitrary field. As usual,  we will use the notation $K^\times $ for $K\setminus \{0\}$.

\begin{theorem}\label{red}
For every nonzero element $a$ in a Leavitt path algebra $L_K(E)$, there exist $\alpha, \beta\in {\rm Path}(E)$ such that:
\begin{enumerate}[\rm (i)]
\item $0\neq \alpha^\ast a \beta=kv$ for some $k\in K^\times$ and $v\in E^0$, or
\item $0\neq \alpha^\ast a \beta=p(c,c^\ast)$, where $c$ is a cycle without exits in $E$ and $p(x, x^{-1})\in K[x, x^{-1}]$.
\end{enumerate}
\end{theorem}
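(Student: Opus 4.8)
My plan is to prove the statement by a \emph{compression} argument: starting from a reduced expression of $a$, I repeatedly replace $a$ by $\gamma^\ast a\,\delta$ for suitable $\gamma,\delta\in{\rm Path}(E)$, always keeping the result nonzero, until it takes one of the two stated shapes. Write $a=\sum_{i=1}^n k_i\mu_i\nu_i^\ast$ with $k_i\in K^\times$, $r(\mu_i)=r(\nu_i)$, and the monomials $\mu_i\nu_i^\ast$ pairwise distinct (reduced form). The only moves I need are left multiplication by a ghost path $\gamma^\ast$ and right multiplication by a real path $\delta$; by (CK1) the product $\gamma^\ast\mu$ is nonzero precisely when $\gamma$ and $\mu$ are comparable ($\gamma\le\mu$ or $\mu\le\gamma$), in which case it is a real path or a ghost path, and symmetrically for $\nu^\ast\delta$. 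The case $n=1$ is the base: for $a=k_1\mu_1\nu_1^\ast$ one computes $\mu_1^\ast a\,\nu_1=k_1\,r(\mu_1)r(\nu_1)=k_1v$ with $v=r(\mu_1)$, which is conclusion (i).

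Next I would localize and compress onto a single vertex. Since $a$ involves only finitely many vertices, $(\sum_w w)\,a\,(\sum_v v)=a$, so some $wav=w^\ast a v$ is nonzero; after this step every surviving monomial has $s(\mu_i)=w$ and $s(\nu_i)=v$. Then, choosing in turn a monomial whose real (resp.\ ghost) part has maximal length and multiplying by it, the two moves above collapse the longest parts to vertices and shorten the others; iterating, I aim to reach a nonzero element $b=\gamma^\ast a\,\delta$ lying in a corner $tL_K(E)t$ for a single vertex $t$. The point of the localization is to confine all the remaining combinatorics to the paths that return to $t$.

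The dichotomy is then governed by exitless cycles. If $t$ lies on a cycle $c$ without exits, then at every vertex of $c$ the cyclic edge is the unique edge emitted, so (CK1)--(CK2) yield $cc^\ast=c^\ast c=t$; moreover the only paths issuing from $t$ are initial segments of $c^\infty$, whence $tL_K(E)t=K[c,c^\ast]\cong K[x,x^{-1}]$ and the nonzero $b$ is exactly $p(c,c^\ast)$, which is (ii). In every other case I claim the reduction can be pushed all the way to a single vertex. The obstruction to shrinking $b$ is a closed path reproducing itself under compression; but if the cycle $c$ responsible carries an exit $f$ (an edge with $f\ne e$ and $s(f)=s(e)$ for a cyclic edge $e$), then $f^\ast e=0$, and after compressing the corner to the vertex emitting $f$ I can strip the cyclic part of $b$: factoring out leading powers of $c$ by means of $c^\ast$ (using $c^\ast c=t$) until the constant term is nonzero, and then applying $f^\ast(-)f$ to annihilate all positive powers of $c$, leaving a nonzero multiple of $r(f)$, i.e.\ (i). When $t$ lies on no cycle, no self-reproducing path exists and a compression by a real part of maximal length already terminates at some $kv$.

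The main obstacle is the bookkeeping that guarantees termination and nonvanishing: each compression must be chosen so that the product stays nonzero while a suitable complexity measure (for instance the lexicographic pair formed by the maximal path length occurring and the number of monomials attaining it) strictly decreases, since a careless choice can cancel $b$ to $0$. Equally delicate is the last step, namely proving that an exit \emph{always} permits a further strict reduction to a vertex; this is precisely what forces the exitless hypothesis in case (ii). By contrast, the corner identification $tL_K(E)t\cong K[x,x^{-1}]$ for an exitless cycle is a routine consequence of $cc^\ast=c^\ast c=t$.
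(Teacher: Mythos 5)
You have not actually proved the theorem; at the two decisive points your text records the difficulty instead of resolving it, and one of the concrete devices you propose fails. (Note also that the paper itself does not prove this statement: it imports it from \cite[Proposition 3.1]{AMMS1}, the so-called Reduction Theorem, so the comparison is with that proof.) The central gap is your compression step. A ``reduced form'' $a=\sum_{i=1}^{n}k_i\mu_i\nu_i^{\ast}$ is not canonical, because the monomials $\mu\nu^{\ast}$ are \emph{linearly dependent} in $L_K(E)$: relation (CK2) gives $v=\sum_{s(e)=v}ee^{\ast}$, so after a move $a\mapsto\gamma^{\ast}a\delta$ distinct surviving monomials can coincide and cancel, and pairwise distinctness of the $\mu_i\nu_i^{\ast}$ does not prevent this. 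Your own sketch concedes that each move must be ``chosen so that the product stays nonzero while a suitable complexity measure strictly decreases,'' but exhibiting such a choice \emph{is} the theorem; no argument is offered. The proof in \cite{AMMS1} gets around exactly this obstruction by a preliminary lemma you are missing: for every nonzero $a\in L_K(E)$ there exists a single real path $\mu$ with $0\neq a\mu\in KE$ (this rests on $L_K(E)$ being an algebra of right quotients of $KE$, cf.\ \cite{SilesAQLPA}). Once inside the path algebra, distinct real paths are linearly independent, so writing $0\neq b=\sum k_i\alpha_i$, fixing a range vertex $v$ with $bv\neq 0$ and compressing by a \emph{minimal}-length $\alpha_1$ yields $\alpha_1^{\ast}bv=k_1v+\sum_{i\geq 2}k_i\gamma_i$ with the $\gamma_i$ distinct nontrivial closed paths based at $v$ and no cancellation possible; the dichotomy on exitless cycles is then run on this clean expression.

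Your exit-stripping device is also flawed as stated. If $t$ lies on a cycle $c$ \emph{with} exits, the corner $tL_K(E)t$ is not spanned by the powers $c^m$ and $(c^{\ast})^m$: it contains monomials $\mu\nu^{\ast}$ that leave the cycle through an exit and return, and moreover $cc^{\ast}\neq t$, so ``the constant term'' of $b$ is not well defined and ``factoring out leading powers of $c$ by means of $c^{\ast}$'' does not normalize $b$; likewise $f^{\ast}(-)f$ annihilates the pure cycle powers but not the off-cycle monomials, so a nonzero multiple of $r(f)$ is not what remains in general. Compare Proposition \ref{Zgen} in this paper, where precisely such a corner requires the generating set $A\cup B$ and the iterated map $S(x)=d^{\ast}xd$. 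Handling this case needs a genuine induction (in \cite{AMMS1}, on lengths of the closed paths $\gamma_i$ and chosen exits), not a two-step cleanup. By contrast, your base case $n=1$, the localization $(\sum_w w)a(\sum_v v)=a$, and the identification $tL_K(E)t\cong K[x,x^{-1}]$ when $t$ lies on an exitless cycle (this is \cite[Proposition 2.3]{ABS}) are all correct; the missing preliminary lemma and the missing induction are what separate your outline from a proof.
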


Of special interest will be the following result. 

\begin{corollary}\label{redhom}
Let $a$ be a nonzero homogeneous element in a Leavitt path algebra $L_K(E)$. Then, there exist $\alpha, \beta\in {\rm Path}(E)$, $k\in K^\times$ and $v\in E^0$, such that $0\neq \alpha^\ast a \beta=kv$.
\end{corollary}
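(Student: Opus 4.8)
The plan is to invoke Theorem \ref{red} and then exploit the extra hypothesis of homogeneity to collapse its second alternative into the first. First I would apply Theorem \ref{red} to the nonzero element $a$, obtaining paths $\alpha,\beta$ with either $\alpha^\ast a\beta=kv$ for some $k\in K^\times$ and $v\in E^0$ (in which case there is nothing more to prove), or $\alpha^\ast a\beta=p(c,c^\ast)$ for a cycle without exits $c$ and some $p(x,x^{-1})\in K[x,x^{-1}]$. So the whole content is in handling this second case.

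The key observation is that, since $a$ is homogeneous of some degree $n$, the element $\alpha^\ast a\beta$ is again homogeneous, of degree $n+\mathrm{length}(\beta)-\mathrm{length}(\alpha)$. Now I would recall that for a cycle without exits $c$ based at $v$, every vertex of $c$ is a regular vertex emitting only the corresponding cycle edge, so applying (CK2) at each such vertex yields $cc^\ast=c^\ast c=v$; consequently the subalgebra generated by $c$ and $c^\ast$ inside $vL_K(E)v$ is a copy of $K[x,x^{-1}]$ with $c\mapsto x$. Writing $\ell=\mathrm{length}(c)$, the monomials $c^{m}$, $v$, $(c^\ast)^{m}$ live in the distinct graded components of degrees $m\ell$, $0$ and $-m\ell$. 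Hence a homogeneous element of this Laurent-polynomial subalgebra is forced to be a single monomial, i.e.\ $p(c,c^\ast)$ equals $kc^{m}$, $kv$, or $k(c^\ast)^{m}$ for some $k\in K^\times$ and $m>0$ (the middle case $kv$ is already of the required form).

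Finally I would absorb the cycle power into the connecting paths. If $\alpha^\ast a\beta=kc^{m}$, then non-vanishing of the product forces $r(\alpha)=s(c)=v$, so that $\alpha c^{m}$ is a genuine path, and using $(c^\ast)^{m}c^{m}=v$ one gets $(\alpha c^{m})^\ast a\beta=(c^\ast)^{m}(kc^{m})=kv$. Symmetrically, if $\alpha^\ast a\beta=k(c^\ast)^{m}$ then $r(\beta)=v$, the concatenation $\beta c^{m}$ is a path, and $\alpha^\ast a(\beta c^{m})=kv$. In every case we have produced paths, a scalar in $K^\times$, and a vertex as required. The only delicate points—really bookkeeping rather than genuine obstacles—are arguing that homogeneity pins the Laurent polynomial down to a single monomial, and checking that the concatenations $\alpha c^{m}$ and $\beta c^{m}$ are legitimate paths, which follows from the source–range compatibility dictated by the products being nonzero; everything else is routine manipulation with the Cuntz–Krieger relations.
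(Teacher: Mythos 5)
Your proposal is correct and follows essentially the same route as the paper: apply Theorem \ref{red}, use the grading to force $p(c,c^\ast)$ to be a single monomial $kc^{m}$ (with $m\in\mathbb{Z}$), and then multiply by $(c^\ast)^{m}$ on the left or $c^{-m}$ on the right, using $(c^\ast)^{m}c^{m}=r(c)$, to land on a scalar multiple of a vertex. Your extra remarks (that $cc^\ast=v$ via (CK2) for a cycle without exits, and the path-concatenation bookkeeping) are harmless but unnecessary, since only the (CK1) identity $c^\ast c=r(c)$ is actually needed.
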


\begin{proof}
Let $\alpha, \beta\in {\rm Path}(E)$ be such that $0\neq \alpha^\ast a \beta$ is as in cases (i) or (ii) in Theorem \ref{red}. In the first case, we have finished. In the second one, use the grading to obtain that in fact $p(c, c^\ast)$ in (ii) has to be a monomial, that is, $\alpha^\ast a \beta = kc^m$ for some $k\in K^\times$ and a certain integer $m$. If $m=0$, there is nothing more to do. If  $m>0$, then$ ({c^\ast})^m\alpha^\ast a \beta =kr(c)$ and we are done. If $m<0$, then $\alpha^\ast a \beta c^{-m}=kr(c)$ and the proof is complete.
\end{proof}


Another useful result which derives from Theorem \ref{red} is:

\begin{corollary}\label{vertH} Let $H$ be a non-empty hereditary subset of a graph $E$. Then, for every nonzero homogeneous $a\in I(H)$ there exist $\alpha, \beta \in {\rm Path}(E)$ such that $\alpha^\ast a \beta =kv$ for some $k\in K^\times$ and $v\in H$.
\end{corollary}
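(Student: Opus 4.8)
The plan is to first reduce $a$ to a scalar multiple of a single vertex and then to transport that vertex into $H$ by means of Lemma~\ref{caminillos}. Since $a$ is a nonzero homogeneous element, Corollary~\ref{redhom} applies directly and yields paths $\alpha,\beta\in {\rm Path}(E)$ together with $k\in K^\times$ and $v\in E^0$ such that $\alpha^\ast a\beta=kv\neq 0$. The point of departure is thus already a scalar multiple of a vertex; what remains is to show that $v$ lies in the hereditary and saturated closure $\overline H$ of $H$, and then to apply one further pair of multiplications to land on an actual vertex of $H$.

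First I would record two elementary facts that make the later concatenations legal. Because $\alpha^\ast a\beta=kv$ is fixed on the left by $r(\alpha)$ and on the right by $r(\beta)$ (as $r(\alpha)\alpha^\ast=\alpha^\ast$ by (E2) and $\beta\,r(\beta)=\beta$ by (E1)), the orthogonality of distinct vertices coming from (V) forces $r(\alpha)=r(\beta)=v$. Next, since $a\in I(H)$ and $I(H)$ is a two-sided ideal, the element $kv=\alpha^\ast a\beta$ belongs to $I(H)$, whence $v\in I(H)\cap E^0$. Using the identification $I(H)\cap E^0=\overline H$ valid for a hereditary set $H$ (one containment is immediate from Lemma~\ref{caminillos}, since each vertex of $\overline H$ is a finite sum $\sum\alpha_i\alpha_i^\ast$ with $r(\alpha_i)\in H\subseteq I(H)$; the reverse containment follows by passing to the quotient graph modulo $\overline H$, where a vertex maps to zero precisely when it lies in $\overline H$), I conclude $v\in\overline H$.

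Now Lemma~\ref{caminillos} provides a finite expression $v=\sum_{i=1}^{n}\alpha_i\alpha_i^\ast$ with $r(\alpha_i)\in H$; multiplying this identity on the left by $v$ and using (V) shows $s(\alpha_i)=v$ for every $i$. Fix any index $i$ and set $\gamma=\alpha\alpha_i$ and $\delta=\beta\alpha_i$, which are genuine paths because $r(\alpha)=r(\beta)=v=s(\alpha_i)$. Then
\[
\gamma^\ast a\,\delta=\alpha_i^\ast(\alpha^\ast a\beta)\alpha_i=k\,\alpha_i^\ast v\,\alpha_i=k\,\alpha_i^\ast\alpha_i=k\,r(\alpha_i),
\]
using $v\alpha_i=\alpha_i$ and the relation $\alpha_i^\ast\alpha_i=r(\alpha_i)$ obtained by iterating (CK1). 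Since $k\in K^\times$ and $r(\alpha_i)\in H$, the paths $\gamma,\delta$ and the vertex $w=r(\alpha_i)$ are exactly what the statement requires. The only genuinely non-formal step is the identity $I(H)\cap E^0=\overline H$; everything else is bookkeeping with the grading and with the source/range relations (V), (E1), (E2), (CK1). I expect that identity, rather than the manipulations, to be the main obstacle, as it is precisely where the hereditariness of $H$ is used and where one must take care that the argument remains valid for an arbitrary, not necessarily row-finite, graph.
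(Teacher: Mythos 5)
Your proof is correct, and its first half is exactly the paper's: both apply Corollary \ref{redhom} to replace $a$ by a scalar multiple $kv$ of a vertex $v\in I(H)\cap E^0$. You diverge in how the vertex is transported into $H$. The paper applies \cite[Lemma 3.1]{AAPS} directly to $v$ — every element of $I(H)$ is a finite sum $\sum_i k_i\lambda_i\mu_i^*$ with $r(\lambda_i)=r(\mu_i)\in H$ — and then multiplies by $\mu_1^*(\cdot)\mu_1$ to land on $kr(\mu_1)\in K^\times H$; this needs no saturation and never mentions $\overline H$. You instead route through the identity $I(H)\cap E^0=\overline H$ and then invoke Lemma \ref{caminillos} to get the symmetric decomposition $v=\sum_i\alpha_i\alpha_i^*$ with $r(\alpha_i)\in H$. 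This is valid: the containment you actually use, $I(H)\cap E^0\subseteq\overline H$, is precisely the graded-ideal correspondence the paper itself imports via \cite[Theorem 2.4.13]{AAS} in the proof of Lemma \ref{interseccionhereditarios}, although your quotient-graph justification needs some care for arbitrary graphs (breaking vertices enter the quotient graph construction; the conclusion that a vertex outside $\overline H$ survives in the quotient is nonetheless correct). What your detour buys is a cleaner endgame — the symmetric form makes $\alpha_i^*v\alpha_i=r(\alpha_i)$ immediate, with no bookkeeping about the shape of a general element of $I(H)$ — at the cost of strictly heavier machinery; the paper's route is more economical, and indeed you could delete your middle paragraph entirely by applying \cite[Lemma 3.1]{AAPS} to $v$ as the paper does. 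One cosmetic point: multiplying $v=\sum_i\alpha_i\alpha_i^*$ on the left by $v$ shows only that summands with $s(\alpha_i)\neq v$ may be discarded, not that every $s(\alpha_i)=v$; that weaker statement suffices for your computation, and the paths produced by Lemma \ref{caminillos} do start at $v$ by construction anyway.
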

\begin{proof} Given the nonzero element $a\in I(H)$ apply 
Corollary \ref{redhom} and choose $\lambda, \mu\in {\rm Path}(E)$ such that $\lambda^\ast a \mu =kw$ for some $k\in K^\times$ and $w\in E^0$. 
Observe that $w\in I(H)$.
Use \cite[Lemma 3.1]{AAPS}  to write $w=\sum_{i=1}^m k_i\lambda_i\mu_i^\ast$
with $k_i \in K^\times$, $\lambda_i, \mu_i \in {\rm Path}(E)$, $r(\lambda_i)=r(\mu_i)\in H$, and suppose $\lambda_i\mu_i^* \neq \lambda_j\mu_j^*$ for every $i\neq j$.
Then for $v= r(\mu_1)$, $\alpha=\lambda\mu_1$ and $\beta=\mu\mu_1$ we have
$\alpha^*a\beta= \mu_1^\ast\lambda^\ast a \mu\mu_1 = k \mu_1^\ast w\mu_1 = k\mu_1^\ast \mu_1= kr(\mu_1)=kv$, which is nonzero and satisfies $v\in H$. 
\end{proof}

\begin{proposition}\label{sumher}
Let $\{H_i\}_{i\in \Lambda}$ be a family of hereditary subsets of a graph $E$ such that $H_i \cap H_j =\emptyset$ for every $i\neq j$. Then: 
$$I\left(\overline{\underset{i\in \Lambda}\cup H_i}\right) =I\left(\underset{i\in \Lambda}\cup H_i\right) = \underset{i\in \Lambda}\oplus I(H_i)
= \underset{i\in \Lambda}\oplus I\left(\overline{H_i}\right).$$
\end{proposition}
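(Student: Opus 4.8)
The plan is to prove the chain of equalities by splitting it into three essentially independent facts: that passing to the saturated closure does not change the generated ideal, that the ideal generated by a union of hereditary sets is the sum of the ideals they generate, and that this sum is direct. Throughout I write $H:=\bigcup_{i\in\Lambda}H_i$, which is again hereditary, being a union of hereditary sets.

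First I would dispose of the equalities $I(\overline H)=I(H)$ and $I(\overline{H_i})=I(H_i)$ at once. Since $H\subseteq\overline H$, one containment is trivial; for the other, Lemma \ref{caminillos} shows that every $v\in\overline H$ can be written as $v=\sum_i\alpha_i\alpha_i^\ast$ with $r(\alpha_i)\in H$, so that $v=\sum_i\alpha_i\,r(\alpha_i)\,\alpha_i^\ast\in I(H)$; hence $\overline H\subseteq I(H)$ and therefore $I(\overline H)\subseteq I(H)$. The same argument applied to each $H_i$ gives $I(\overline{H_i})=I(H_i)$, which already identifies the third and fourth terms of the statement. The middle equality $I(H)=\sum_i I(H_i)$ is the purely ring-theoretic observation that the ideal generated by $\bigcup_i H_i$ is the smallest ideal containing every $H_i$, and thus coincides with $\sum_i I(H_i)$.

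The heart of the proof is showing that this sum is direct, i.e. $I(H_i)\cap\sum_{j\neq i}I(H_j)=0$ for every $i$. Writing $B_i:=\bigcup_{j\neq i}H_j$, which is hereditary and disjoint from $H_i$, and using the previous paragraph to identify $\sum_{j\neq i}I(H_j)=I(B_i)$, this reduces to the key claim: if $A,B$ are hereditary subsets of $E^0$ with $A\cap B=\emptyset$, then $I(A)\cap I(B)=0$. To prove it I would argue by contradiction. Since $I(A)$ and $I(B)$ are generated by vertices they are graded, so their intersection is a graded ideal; were it nonzero it would contain a nonzero homogeneous element $a$. Applying Corollary \ref{vertH} to $a\in I(A)$ produces $\alpha,\beta\in{\rm Path}(E)$, $k\in K^\times$ and $v\in A$ with $\alpha^\ast a\beta=kv$; as $a\in I(B)$ and $I(B)$ is an ideal, this forces the vertex $v$ to lie in $I(B)$.

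It remains to reach the contradiction. Because $I(B)=I(\overline B)$ with $\overline B$ hereditary and saturated, the standard identity $I(\overline B)\cap E^0=\overline B$ gives $v\in\overline B$; together with $v\in A\subseteq\overline A$ this yields $v\in\overline A\cap\overline B$, which by Lemma \ref{Interseccion}(ii) equals $\overline{A\cap B}=\overline{\emptyset}=\emptyset$, a contradiction. (If one prefers to stay strictly inside the toolkit of this section, the same conclusion follows by applying Corollary \ref{vertH} a second time to the degree-zero element $v\in I(B)$: this produces $\gamma,\delta$ with $\gamma^\ast v\delta$ a nonzero scalar multiple of a vertex $w\in B$, which forces $\gamma=\delta$, hence $w=r(\gamma)$ and $v=s(\gamma)\ge w$; heredity of $A$ then puts $w\in A\cap B=\emptyset$.) With the claim established, taking $A=H_i$ and $B=B_i$ gives directness and completes the identification of all four ideals. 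I expect the directness step, and within it the passage from ``the vertex $v$ lies in $I(B)$'' to ``$v\in\overline B$'', to be the only genuinely delicate point; everything else is either formal or a direct appeal to Lemma \ref{caminillos} and Corollary \ref{vertH}.
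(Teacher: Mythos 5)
Your proof is correct, and its skeleton coincides with the paper's: both establish $I\bigl(\cup_i H_i\bigr)=\sum_i I(H_i)$, prove directness by extracting a nonzero homogeneous element from a putative intersection and applying Corollary \ref{vertH} to land on a vertex, and then derive a contradiction from heredity plus disjointness. The differences are in how the sub-steps are discharged. For the outer equalities the paper simply cites \cite[Lemma 2.1]{APS1}, whereas you rederive $I(\overline H)=I(H)$ from Lemma \ref{caminillos}; this is a legitimate, self-contained shortcut. For the middle equality the paper invokes the normal form of \cite[Lemma 3.1]{AAPS} to split an element of $I(H)$ according to which $H_i$ the ranges lie in; your observation that this equality is purely formal (a sum of ideals is an ideal containing $\cup_i H_i$, and conversely) is simpler and equally valid. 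The only genuinely divergent point is the finishing move in the directness step: the paper writes the vertex $w\in H_j\cap I\bigl(\cup_{i\neq j}H_i\bigr)$ as $\sum_l k_l\alpha_l\beta_l^\ast$ with $r(\alpha_l)\in\cup_{i\neq j}H_i$ via \cite[Lemma 3.1]{AAPS} and concludes $r(\alpha_1)\in H_j\cap\bigl(\cup_{i\neq j}H_i\bigr)=\emptyset$ by heredity of $H_j$ --- which is exactly your parenthetical second route, since Corollary \ref{vertH} applied to the degree-zero element $v$ is that decomposition in disguise. Your primary route instead passes through the identity $I(\overline B)\cap E^0=\overline B$ (the hereditary-saturated/graded-ideal correspondence, available for arbitrary graphs via \cite[Theorem 2.4.13]{AAS}, which the paper only deploys in the subsequent Lemma \ref{interseccionhereditarios}) together with Lemma \ref{Interseccion}(ii) and the tacit fact $\overline{\emptyset}=\emptyset$ (true because saturation only concerns vertices with $s^{-1}(v)\neq\emptyset$). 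That route buys you a statement-level argument at the cost of heavier imported machinery; your second route, like the paper's, stays entirely within the section's elementary toolkit, and your instinct that the passage from $v\in I(B)$ to a vertex of $B$ below $v$ is the delicate point is exactly right.
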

\begin{proof}
The union of any family of hereditary subsets is again hereditary, hence $H:=\underset{i\in \Lambda}\cup H_i$ is a hereditary subset of $E^0$. By \cite[Lemma 3.1]{AAPS} every element $a$ in $I(H)$ can be written as $a=\sum_{l=1}^nk_l\alpha_l\beta^\ast_l$, where $k_l\in K^\times$, $\alpha_l, \beta_l \in {\rm Path}(E)$ and
$r(\alpha_l)=r(\beta_l)\in H$. Separate the vertices appearing as ranges of the $\alpha_l$'s depending on the $H_i$'s they belong to, and apply again \cite[Lemma 3.1]{AAPS}. This gives
$a \in \underset{i\in \Lambda}\sum I(H_i)\subseteq I(H)$ since $H_i\subseteq H$ and so $\underset{i\in \Lambda}\sum I(H_i)= I(H)$.

Now we prove that the sum of the $I(H_i)$'s is direct. If this is not the case, since we are dealing with graded ideals, we may suppose that there exists a homogeneous element $0\neq a\in I(H_j)\cap \underset{j\neq i\in \Lambda}\sum I(H_i)$ for some
$j\in \Lambda$; by Corollary \ref{vertH} there exist $\alpha, \beta \in {\rm Path}(E)$ and $k\in K^\times$ such that $0\neq k^{-1}\alpha^\ast a \beta =w\in H_j$. Observe that $w$ also belongs to $I(\underset{j\neq i\in \Lambda}\cup H_i)$.

Write $w=\sum_{l=1}^nk_l\alpha_l\beta^\ast_l$, with $k_l\in K$, $\alpha_l, \beta_l \in {\rm Path}(E)$,
$r(\alpha_l)=r(\beta_l)\in \underset{j\neq i\in \Lambda}\cup H_i$, and assume that every summand is non-zero. 
Then $0\neq r(\beta_1)= \beta_1^\ast \beta_1= \beta_1^\ast w\beta_1\in \underset{j\neq i\in \Lambda}\cup H_i$. On the other hand, $s(\alpha_1)=w\in H_j$ implies (since $H_j$ is a hereditary set)  $r(\alpha_1)\in H_j$; therefore, $r(\alpha_1)=r(\beta_1)\in H_j \cap \left(\underset{j\neq i\in \Lambda}\cup H_i\right)$, a contradiction.

To conclude the proof we point out that the first and last identities follow from \cite[Lemma 2.1]{APS1}.
\end{proof}

A similar relation can be established for the ideal generated by the intersection of a family of hereditary subsets.
\medskip

\begin{lemma}\label{interseccionhereditarios} 
Let $\{H_i\}_{i\in \Lambda}$ be a family of hereditary  subsets of an arbitrary graph $E$. Then:
\begin{enumerate}[\rm (i)]
\item $I(\cap_{i\in \Lambda}\overline{H_i})= \cap_{i\in \Lambda}I(H_i)$.
\item If $\Lambda$ is finite, then $I(\cap_{i\in \Lambda}H_i)= \cap_{i\in \Lambda}I(H_i)$.
\end{enumerate}
\end{lemma}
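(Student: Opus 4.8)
The key tool will be Lemma \ref{Interseccion}(ii), which already establishes that $\overline{H_1}\cap\overline{H_2}=\overline{H_1\cap H_2}$, together with the fact (cited from \cite[Lemma 2.1]{APS1} and used in Proposition \ref{sumher}) that $I(\overline{H})=I(H)$ for any hereditary $H$. Since one containment in each part is essentially formal, the substance lies in the reverse containment, where I expect to exploit Corollary \ref{vertH} and the grading exactly as in Proposition \ref{sumher}.

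\textbf{Part (i).} First I would dispose of the easy containment: since $\cap_i\overline{H_i}\subseteq\overline{H_j}$ for each $j$, monotonicity of $I(\cdot)$ gives $I(\cap_i\overline{H_i})\subseteq I(\overline{H_j})=I(H_j)$ for every $j$, hence $I(\cap_i\overline{H_i})\subseteq\cap_i I(H_i)$. For the reverse containment I would take a nonzero $a\in\cap_i I(H_i)$; since each $I(H_i)$ is graded, their intersection is graded, so it suffices to treat homogeneous $a$. By Corollary \ref{vertH} applied inside each $I(H_i)$ I can produce $\alpha,\beta$ and a vertex reduction, but the cleaner route is to argue directly on vertices: the graded ideal $\cap_i I(H_i)$ equals $I(J\cap E^0)$ where $J=\cap_i I(H_i)$, and a vertex $v$ lies in $I(H_i)$ precisely when $v\in\overline{H_i}$ (this is the standard characterization of which vertices lie in the graded ideal generated by a hereditary set, again via \cite[Lemma 2.1]{APS1}). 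Thus $J\cap E^0=\cap_i\overline{H_i}$, giving $\cap_i I(H_i)=I(\cap_i\overline{H_i})$. The main obstacle here is justifying that $v\in I(H_i)\iff v\in\overline{H_i}$; I would cite the saturation description from \cite{APS1} rather than reprove it, noting Lemma \ref{caminillos} already shows $v\in\overline{H_i}\Rightarrow v\in I(H_i)$ constructively.

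\textbf{Part (ii).} Assuming $\Lambda$ finite, I would reduce to (i) by the identity $\overline{\cap_i H_i}=\cap_i\overline{H_i}$, which is precisely the finite iterate of Lemma \ref{Interseccion}(ii) (finiteness of $\Lambda$ is exactly what makes the inductive extension of that two-set statement legitimate, since the cited lemma proves $\Lambda_m(H_1)\cap\Lambda_m(H_2)=\Lambda_m(H_1\cap H_2)$ only for two sets). Then
$$I\Bigl(\bigcap_{i\in\Lambda}H_i\Bigr)=I\Bigl(\overline{\bigcap_{i\in\Lambda}H_i}\Bigr)=I\Bigl(\bigcap_{i\in\Lambda}\overline{H_i}\Bigr)=\bigcap_{i\in\Lambda}I(H_i),$$
where the first equality is $I(H)=I(\overline H)$, the second is the iterated Lemma \ref{Interseccion}(ii), and the third is part (i). The one point needing care is that the two-set result iterates: one checks $\overline{H_1\cap H_2\cap H_3}=\overline{(H_1\cap H_2)\cap H_3}=\overline{H_1\cap H_2}\cap\overline{H_3}=\overline{H_1}\cap\overline{H_2}\cap\overline{H_3}$, using that $H_1\cap H_2$ is again hereditary so Lemma \ref{Interseccion}(ii) applies at each stage. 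I expect the genuine difficulty throughout to be bookkeeping rather than conceptual: ensuring the vertex-level characterization of graded ideals is invoked correctly and that finiteness is used only where the intersection of saturated closures could otherwise fail to commute with saturation.
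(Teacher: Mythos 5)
Your part (ii) is exactly the paper's argument: finiteness of $\Lambda$ lets you iterate Lemma \ref{Interseccion}(ii) to get $\overline{\cap_i H_i}=\cap_i\overline{H_i}$, and the chain $I(\cap_i H_i)=I(\overline{\cap_i H_i})=\cap_i I(\overline{H_i})=\cap_i I(H_i)$ is word for word what the paper writes. The problem is part (i), where your ``cleaner route'' has a genuine gap: you deduce $\cap_i I(H_i)=I(J\cap E^0)$ from the principle that a graded ideal is generated by its vertices, citing \cite[Lemma 2.1]{APS1}. That lemma is a \emph{row-finite} statement, while the lemma under discussion is asserted for arbitrary graphs, and in the presence of infinite emitters the principle is false: graded ideals of $L_K(E)$ correspond to admissible pairs $(H,S)$ with $S$ a set of breaking vertices, and for $S\neq\emptyset$ the graded ideal generated by $H\cup\{v^H : v\in S\}$, where $v^H=v-\sum_{s(e)=v,\,r(e)\notin H}ee^*$, is strictly larger than the ideal generated by its set of vertices (which is still just $\overline H$). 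So from ``$J:=\cap_i I(H_i)$ is graded with $J\cap E^0=\cap_i\overline{H_i}$'' you may only conclude $J\supseteq I(\cap_i\overline{H_i})$; for equality you must additionally rule out that some $v^H$, with $H=\cap_i\overline{H_i}$ and $v$ a breaking vertex of $H$, lies in every $I(H_i)$. That exclusion is true --- pick $i$ with $v\notin\overline{H_i}$ (possible since $v\notin H$); as every edge leaving $v$ whose range misses $H$ also has range missing $\overline{H_i}$ when $v$ sits inside the other closures, one checks that $v^H$ maps to a nonzero element, essentially $v^{\overline{H_i}}$, in $L_K(E)/I(\overline{H_i})$ --- but it is an argument you neither give nor can import from \cite{APS1}.

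The paper sidesteps all of this by quoting the arbitrary-graph lattice isomorphism of \cite[Theorem 2.4.13]{AAS} between hereditary saturated subsets and the special class of graded ideals generated by sets of vertices; that correspondence in particular turns intersections of such ideals into intersections of the corresponding hereditary saturated sets, which is precisely the content your vertex-level step tries to reproduce by hand. Your subsidiary claims are fine: $v\in I(H_i)$ if and only if $v\in\overline{H_i}$ does hold for arbitrary graphs (Lemma \ref{caminillos} gives one direction, as you note), and $I(\overline H)=I(H)$ is likewise safe. So the proof is repaired either by citing \cite[Theorem 2.4.13]{AAS} as the paper does, or by supplying the breaking-vertex exclusion sketched above; as written, your argument is correct only for row-finite graphs and thus proves less than the stated lemma.
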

\begin{proof} 
(i). \ By the isomorphism given in \cite[Theorem 2.4.13]{AAS} among hereditary and saturated subsets of vertices and a special type of graded ideals,  $I( \cap_{i\in \Lambda}\overline{H_i}) =  \cap_{i\in \Lambda}I(\overline{H_i}) = \cap_{i\in \Lambda}I(H_i)$.

(ii). \ When $\Lambda$ is finite, then $\cap_{i\in \Lambda} \overline{H_i} = \overline{\cap_{i\in \Lambda}H_i}$ (use Lemma \ref{Interseccion}), and consequently 
$$I(\cap_{i\in \Lambda}H_i)= I(\overline{\cap_{i\in \Lambda}H_i}) = \cap_{i\in \Lambda}I(\overline{H_i}) = \cap_{i\in \Lambda}I(H_i).$$

\end{proof}

Before stating the result that will allow us to work with connected graphs,
 we need to recall that the ideal generated by a hereditary and saturated subset in a Leavitt path algebra is isomorphic to a Leavitt path algebra.


\medskip


Let $E$ be a graph. For every non empty hereditary subset $H$ of $E^0$, define
$$F_E(H)=\{ \alpha =e_1 \dots e_n\mid e_i\in
E^1, s(e_1)\in E^0\setminus H, r(e_i)\in E^0\setminus H \mbox{ for } i<n, r(e_n)\in H\}.$$ Denote by $\overline{F}_E(H)$ another
copy of $F_E(H)$. For $\alpha\in F_E(H)$, we write $\overline{\alpha}$ to denote a copy of $\alpha$ in $\overline{F}_E(H)$. Then, we define the graph
$${}_HE=({}_HE^0, {}_HE^1, s', r')$$

\noindent   
 as follows:
\begin{enumerate}

\item ${}_HE^0=({}_HE)^0=H\cup F_E(H)$. \item ${}_HE^1=({}_HE)^1=\{ e\in
E^1\mid s(e)\in H\}\cup \overline{F}_E(H)$.
\item For every $e\in E^1 \mbox{ with } s(e)\in H$, $s'(e)=s(e)$ and $r'(e)=r(e)$. \item For every $\overline{\alpha}\in
\overline{F}_E(H)$,  $s'(\overline{\alpha})=\alpha$ and $r'(\overline{\alpha})=r(\alpha)$.
\end{enumerate}
\medskip

The following result was first proved in \cite[Lemma 5.2]{APS1} and then in \cite[Lemma 1.2]{AP} for row-finite graphs, although the result is valid in general (see \cite[Theorem 2.4.22]{AAS}).

\medskip

\begin{lemma}\label{idhersetLPA} Let $E$ be an arbitrary graph and $K$ any field. For a hereditary subset  $H\subseteq E^0$, the ideal
$I(H)$ is isomorphic  to the Leavitt path algebra $L_K({_H}E)$. Concretely, there is an isomorphism (which is not graded)
$\varphi: L_K({_H}E) \to I(H)$ acting as follows: 
\begin{enumerate}
\item[ ] $\varphi (v) = v$ for every $v\in H$,
\item[ ] $\varphi(\alpha)=\alpha\alpha^*$ for every $\alpha \in F_E(H)$,
\item[ ] $\varphi(e)=e$ and $\varphi(e^*)=e^*$ for every $e\in E^1$ such that $s(e)\in H$.
\item[ ] $\varphi (\overline \alpha)=\alpha$ and $\varphi (\overline \alpha^*)=\alpha^*$ for every $\overline\alpha \in
\overline{F}_E(H)$.
\end{enumerate}
\end{lemma}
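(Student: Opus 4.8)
The plan is to construct $\varphi$ directly on generators and invoke the universal property of $L_K({}_HE)$, then establish surjectivity onto $I(H)$ and, finally, injectivity. Throughout, the combinatorics of $F_E(H)$ does most of the work. The decisive observation is that the only vertex of a path $\alpha\in F_E(H)$ lying in $H$ is its range $r(\alpha)$; consequently no element of $F_E(H)$ is a prefix of another, so that $\alpha^\ast\beta=\delta_{\alpha,\beta}\,r(\alpha)$ in $L_K(E)$ for $\alpha,\beta\in F_E(H)$. Two further structural facts are used repeatedly: in ${}_HE$ a vertex $v\in H$ emits exactly the edges $\{e\in E^1: s(e)=v\}$ it emits in $E$ (the ghost copies $\overline\alpha$ have source $\alpha\notin H$), while each new vertex $\alpha\in F_E(H)$ emits the single edge $\overline\alpha$ and receives none, since every edge of ${}_HE$ has range in $H$.

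With these in hand I would first check that the prescribed images satisfy the defining relations of a Cuntz--Krieger ${}_HE$-family in $I(H)\subseteq L_K(E)$, so that $\varphi$ extends to an algebra homomorphism. Relation (V) reduces to the prefix computation above together with $v(\alpha\alpha^\ast)=0$ for $v\in H$, $\alpha\in F_E(H)$ (as $s(\alpha)\notin H$); (E1), (E2) follow from $\alpha^\ast\alpha=r(\alpha)$ for $\alpha\in F_E(H)$ and the hereditariness of $H$ (which guarantees $r(e)\in H$ whenever $s(e)\in H$); (CK1) splits into the real--real case (inherited from $E$), the ghost--ghost case (the prefix identity), and the mixed case, which vanishes because $e^\ast\beta=0$ whenever $s(e)\in H$ and $\beta\in F_E(H)$. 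For (CK2), a regular $v\in H$ inherits the relation from $E$ since its emitted edges coincide, while each new vertex $\alpha$ is regular with $(s')^{-1}(\alpha)=\{\overline\alpha\}$, so (CK2) reads $\alpha\alpha^\ast=\alpha\alpha^\ast$.

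Surjectivity onto $I(H)$ I would obtain from the spanning description of $I(H)$ by monomials $\gamma\delta^\ast$ with $r(\gamma)=r(\delta)\in H$ (\cite[Lemma 3.1]{AAPS}). Each such $\gamma$ factors as $\gamma=\gamma_0\gamma_1$, where $\gamma_0$ is the initial segment up to the first vertex in $H$ (an element of $F_E(H)$, or empty if $s(\gamma)\in H$) and $\gamma_1$ is a path entirely inside $H$; since $\gamma_0=\varphi(\overline{\gamma_0})$ and every edge of $\gamma_1$ lies in the image, $\gamma$ and likewise $\delta$ are in $\mathrm{Im}(\varphi)$, whence so is $\gamma\delta^\ast$. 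That $\mathrm{Im}(\varphi)\subseteq I(H)$ is clear, as every generator lands in $I(H)$.

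The main obstacle is injectivity, and here I would exploit the reduction Theorem \ref{red} rather than a graded uniqueness theorem (indeed $\varphi$ is explicitly not graded). Suppose $0\neq x\in\ker\varphi$. Applying Theorem \ref{red} inside $L_K({}_HE)$ yields paths $\alpha,\beta$ with $\alpha^\ast x\beta$ equal either to $kv$ for some $k\in K^\times$ and $v\in{}_HE^0$, or to $p(c,c^\ast)$ for a cycle $c$ without exits in ${}_HE$ and $0\neq p$. Applying $\varphi$ forces $\varphi(\alpha)^\ast\varphi(x)\varphi(\beta)=0$, that is $\varphi(kv)=0$ or $\varphi(p(c,c^\ast))=0$. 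In the first case $\varphi(v)\in\{v,\alpha\alpha^\ast\}$ is nonzero, a contradiction. In the second case I would argue that no cycle of ${}_HE$ can pass through a new vertex (these receive no edges), so $c$ lies wholly in $H$ and uses only edges of $E$; since the out-edges of $H$-vertices agree in ${}_HE$ and in $E$, $c$ is a cycle without exits in $E$ as well, forcing $\varphi(p(c,c^\ast))=p(c,c^\ast)\neq0$, again a contradiction. Hence $\ker\varphi=0$ and $\varphi$ is the desired isomorphism.
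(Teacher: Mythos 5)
Your proof is correct, but note that the paper itself offers no proof of this lemma: it is imported from the literature, with pointers to \cite[Lemma 5.2]{APS1} and \cite[Lemma 1.2]{AP} for row-finite graphs and to \cite[Theorem 2.4.22]{AAS} for the general case. Your construction agrees with the standard one in those sources: the same assignment on generators, justified by the two combinatorial facts you isolate (no element of $F_E(H)$ is a prefix of another, whence $\alpha^*\beta=\delta_{\alpha,\beta}r(\alpha)$; every edge of ${}_HE$ has range in $H$, so each new vertex $\alpha$ is a source emitting only $\overline{\alpha}$, and $s'^{-1}(v)=s^{-1}(v)$ for $v\in H$), followed by the universal property and surjectivity via the spanning set of \cite[Lemma 3.1]{AAPS} --- the same lemma this paper uses elsewhere. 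Where you genuinely diverge is injectivity. Since $\varphi$ is not graded for the standard $\mathbb{Z}$-gradings, the usual route in the literature is to regrade $L_K({}_HE)$ (giving $\overline{\alpha}$ degree $\mathrm{length}(\alpha)$, which makes $\varphi$ graded and lets a graded uniqueness theorem apply); you instead run the Reduction Theorem (Theorem \ref{red}) inside $L_K({}_HE)$ and push both outcomes through $\varphi$, correctly observing that no cycle of ${}_HE$ meets $F_E(H)$ (new vertices receive no edges) and that a no-exit cycle of ${}_HE$ remains a no-exit cycle of $E$ because the out-edges of vertices in $H$ coincide in the two graphs. This is sound and, in the context of this paper, arguably more economical, since Theorem \ref{red} is already stated here for arbitrary graphs; what it buys over regrading is self-containedness, and what it costs is two standard nonvanishing facts you use tacitly and should cite: vertices (hence the idempotents $\alpha\alpha^*$, as $\alpha^*(\alpha\alpha^*)\alpha=r(\alpha)$) are nonzero in any Leavitt path algebra, and for a cycle $c$ without exits the corner $s(c)L_K(E)s(c)$ is isomorphic to $K[x,x^{-1}]$ (\cite[Proposition 2.3]{ABS}, quoted in this paper), which is what converts ``$p\neq 0$ as a Laurent polynomial'' --- extracted from the same corner isomorphism applied in $L_K({}_HE)$ --- into $p(c,c^*)\neq 0$ in $L_K(E)$.
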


%
%

When  we build the Leavitt path algebra of a graph $E$, we consider paths not only in $E$, but in the extended graph $\widehat{E}$; this means that when we think of a connected graph we have in mind ghost paths too. For this reason we say that a graph $E$ is {\it connected} if $\widehat{E}$ is a connected graph in the usual sense, that is, if given any two vertices $u, v\in E^0$ there exist $h_1, \dots, h_m\in E^1\cup ({E^1})^\ast$ such that $\eta:=h_1\dots h_m$ is a path in $K\widehat{E}$ (in particular it is non-zero) such that $s(\eta)=u$ and $r(\eta)=v$.

The {\it connected components} of a graph $E$ are the graphs $\{E_i\}_{i\in \Lambda}$ such that $E$ is the disjoint union 
$E=\sqcup_{i\in \Lambda}  E_i$, where every $E_i$ is connected.

\begin{corollary}\label{ConnectedComp} Let $E$ be a graph and suppose $E=\sqcup_{i\in \Lambda} E_i$, where each $E_i$ is a connected component of $E$. Then
$L_K(E) \cong \oplus_{i\in \Lambda} L_K(E_i)$.
\end{corollary}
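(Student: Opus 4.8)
The plan is to establish the isomorphism $L_K(E)\cong \oplus_{i\in\Lambda}L_K(E_i)$ by exhibiting mutually inverse $K$-algebra homomorphisms, or equivalently by checking that the evident map is well defined, bijective, and multiplicative. First I would observe that since $E=\sqcup_{i\in\Lambda}E_i$ is a disjoint union, the vertex and edge sets decompose as $E^0=\sqcup_i E_i^0$ and $E^1=\sqcup_i E_i^1$, and crucially $r,s$ restrict to $E_i^1\to E_i^0$ (an edge never crosses between components). The key structural fact I would record is that any path $\alpha\beta^*$ in the spanning set $\{\alpha\beta^*\mid \alpha,\beta\in\mathrm{Path}(E)\}$ of $L_K(E)$ is entirely supported in a single component: if $\alpha=e_1\cdots e_n$ then $r(e_j)=s(e_{j+1})$ forces all vertices of $\alpha$ into one $E_i^0$, and the relation (CK1) $e^*e'=\delta_{e,e'}r(e)$ means a nonzero $\alpha\beta^*$ requires $r(\alpha)=r(\beta)$, pinning $\alpha$ and $\beta$ to the same component.

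The cleanest route I would take is the universal property approach. Each $L_K(E_i)$ sits inside the direct sum via the inclusion $E_i\hookrightarrow E$, and the family of generators $\{v,e,e^*\mid v\in E_i^0, e\in E_i^1\}$ inside $\oplus_j L_K(E_j)$ (placed in the $i$-th slot) satisfies the defining relations (V), (E1), (CK1), (CK2) of $L_K(E)$. Here I must check (CK2) is respected: for a regular vertex $v\in E_i^0$, the set $s_E^{-1}(v)$ equals $s_{E_i}^{-1}(v)$ because no edge of any other component has source $v$, so the relation $v=\sum_{s(e)=v}ee^*$ holds componentwise. By the universal property of $L_K(E)$ this yields a well-defined $K$-algebra homomorphism $\Phi\colon L_K(E)\to \oplus_{i\in\Lambda}L_K(E_i)$ sending each generator to its copy in the appropriate slot. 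In the opposite direction, for each $i$ the inclusion of generators of $E_i$ into $E$ satisfies the relations of $L_K(E_i)$, giving $\psi_i\colon L_K(E_i)\to L_K(E)$, and these assemble into $\Psi=\oplus_i\psi_i$ on the direct sum (using that $L_K(E)$ has local units so the direct sum maps in sensibly even when $\Lambda$ is infinite).

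Finally I would verify $\Phi$ and $\Psi$ are mutually inverse by checking equality on the generating sets, which is immediate since both send a generator of $E_i$ to the corresponding generator and back. Alternatively, to see $\Phi$ is an isomorphism directly, I would use the monomial spanning set: $\Phi$ carries the spanning element $\alpha\beta^*$ (supported in $E_i$) to $\alpha\beta^*$ in the $i$-th summand, so $\Phi$ maps a spanning set onto a spanning set bijectively and preserves the grading-compatible multiplication, where products $\alpha\beta^*\gamma\delta^*$ across distinct components vanish by (CK1) in both algebras. I expect the main obstacle to be the careful bookkeeping when $\Lambda$ is infinite: $\oplus_{i\in\Lambda}L_K(E_i)$ is the algebraic direct sum (finitely supported tuples), and I must confirm that every element of $L_K(E)$, being a finite $K$-linear combination of monomials each landing in one component, indeed maps into this finitely supported direct sum — this is exactly the support lemma established at the outset, so no genuine difficulty arises, only the need to state the non-unital/local-units setting correctly.
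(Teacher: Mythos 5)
Your proof is correct, but it takes a genuinely different route from the paper's. The paper obtains the corollary in two lines from machinery it has already built: each $E_i^0$ is hereditary and the $E_i^0$ are pairwise disjoint, so Proposition \ref{sumher} gives $L_K(E)=I(\sqcup_i E_i^0)=\oplus_i I(E_i^0)$; then Lemma \ref{idhersetLPA} identifies $I(E_i^0)\cong L_K({}_{E_i^0}E)$, and since no edge enters a component from outside, $F_E(E_i^0)=\emptyset$, so ${}_{E_i^0}E$ is just $E_i$. You instead argue from scratch via the universal property, constructing $\Phi\colon L_K(E)\to\oplus_i L_K(E_i)$ and $\Psi=\oplus_i\psi_i$ and checking the relations; your two key verifications are exactly the right ones, namely that $s_E^{-1}(v)=s_{E_i}^{-1}(v)$ (so (CK2) and regularity are componentwise) and that cross-component products vanish, e.g.\ $\beta^*\gamma=\beta^*\,r(\beta)s(\gamma)\,\gamma=0$ by (V). What each approach buys: the paper's proof is essentially free given results it needs elsewhere and makes the conceptual point that components correspond to ideals generated by hereditary subsets, while yours is self-contained, avoids invoking the isomorphism of Lemma \ref{idhersetLPA} (which is not graded in general), and produces a manifestly graded isomorphism valid for arbitrary graphs and arbitrary index sets, with the finite-support issue for infinite $\Lambda$ correctly handled by your support lemma. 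One minor caveat: in your ``alternative'' verification, sending a spanning set bijectively onto a spanning set does not by itself prove injectivity, since spanning sets need not be linearly independent; but your main argument via mutually inverse homomorphisms determined on generators is complete, so this does not affect the correctness of the proof.
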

\begin{proof}
By Proposition \ref{sumher}, $L_K(E)= \oplus_{i\in \Lambda}I(E^0_i)$ and by 
Lemma \ref{idhersetLPA}, $I(E_i^0)$ is isomorphic to the Leavitt path algebra $L_K({_{E^0_i}}E)$. Since the graph ${_{E^0_i}}E$ is just $E_i$, the result follows.
\end{proof}

By means of this corollary, and for our purposes, from now on we will restrict our attention to Leavitt path algebras of connected graphs.
\medskip

Another application of Proposition \ref{sumher} is stated below. Concretely, we will see that essentiality of  graded ideals generated by hereditary and saturated subsets in a Leavitt path algebra (which is equivalent to density as one-sided ideals) can be expressed in terms of properties of the underlying graph. 
  
  \begin{proposition}\label{density}
  Let $H$ be a hereditary  subset of a graph $E$. Then $I(H)$ is a dense (left/right) ideal if and only if every vertex of $E^0$ connects to a vertex in $H$.
 \end{proposition}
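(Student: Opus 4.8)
The plan is to prove both implications by translating the ring-theoretic notion of density into graph-theoretic connectivity. Recall that $I(H)$ is dense as a one-sided ideal precisely when it is an essential ideal, meaning it has nonzero intersection with every nonzero one-sided ideal; equivalently, using local units, it suffices to check that $I(H)$ meets every corner nontrivially, or more concretely that for every nonzero element $a \in L_K(E)$ one has $I(H) \cap L_K(E)a \neq 0$ (and the symmetric statement on the other side). I would phrase the whole argument in terms of this essentiality criterion, and exploit the reduction theorem (Theorem \ref{red}) together with its homogeneous corollary.

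For the forward direction (density implies every vertex connects to $H$), I would argue by contraposition. Suppose some vertex $v \in E^0$ does not connect to any vertex of $H$. Then the hereditary closure (or the tree) of $v$ is disjoint from $H$, and one expects the ideal $I(T(v))$ generated by the tree of $v$ to be disjoint from $I(H)$, or at least that $v$ itself generates a nonzero one-sided ideal meeting $I(H)$ trivially. Concretely, since no path from $v$ reaches $H$, I would show that $vI(H) = 0$ (or the relevant corner $vL_K(E)v$ intersects $I(H)$ in zero), contradicting density. The key computation is that elements of $I(H)$ can be written, by \cite[Lemma 3.1]{AAPS}, as sums $\sum_l k_l \alpha_l \beta_l^\ast$ with $r(\alpha_l) = r(\beta_l) \in H$; multiplying on the left by $v$ forces $s(\alpha_l) = v$, so $\alpha_l$ would be a path from $v$ into $H$, which cannot exist. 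Hence $v I(H) = 0$ and $I(H)$ is not essential.

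For the reverse direction (every vertex connects to $H$ implies density), I would take a nonzero element $a \in L_K(E)$ and aim to produce a nonzero element of $I(H)$ inside the one-sided ideal generated by $a$. Here the natural tool is Corollary \ref{vertH} combined with Theorem \ref{red}: applying the reduction theorem to $a$ yields $\alpha, \beta$ with $\alpha^\ast a \beta$ equal to either $kv$ for a vertex $v$, or $p(c,c^\ast)$ for a cycle $c$ without exits. In the vertex case, since $v$ connects to some $w \in H$ via a path $\gamma$, I would multiply further to land on $w \in H \subseteq I(H)$, for instance via $\gamma^\ast \alpha^\ast a \beta \gamma = k\, r(\gamma) = kw$, giving a nonzero element of $I(H)$ in the ideal generated by $a$. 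In the cycle case, the cycle $c$ (having no exit) must itself connect to $H$, and again one can route an appropriate path to reach a vertex of $H$; I would handle this by first extracting a vertex on the cycle and then using connectivity to $H$ as before.

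The main obstacle I anticipate is the careful handling of the cycle case (ii) of Theorem \ref{red} in the reverse direction: ensuring that after obtaining $p(c,c^\ast)$ one can genuinely multiply down to a single vertex of $H$ without the polynomial interfering, and managing the fact that connectivity of a cycle vertex to $H$ produces a path that may need to be composed on the correct side. A secondary subtlety is pinning down the precise equivalence between \emph{density as a one-sided ideal} and \emph{essentiality}, and confirming that the non-unital, local-units setting does not cause trouble; since $L_K(E)$ is the directed union of its corners, essentiality can be checked corner by corner, and I would state this reduction explicitly at the outset so that both implications can work with the element-wise criterion rather than abstract essentiality.
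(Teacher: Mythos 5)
Your graph-theoretic core is sound and in places more elementary than the paper's. For the forward direction the paper intersects ideals, invoking Lemma \ref{interseccionhereditarios} to get $I(H)\cap I(T(v))=0$ when $T(v)\cap H=\emptyset$; your direct computation that $vI(H)=0$ (via the presentation of elements of $I(H)$ from \cite[Lemma 3.1]{AAPS}: each $v\alpha_l\beta_l^*$ vanishes unless $s(\alpha_l)=v$, which would be a path from $v$ into $H$) gives $vL_K(E)\cap I(H)=0$ and is a perfectly good, lighter substitute. For the converse the paper takes a genuinely different route from yours: it works with graded ideals, uses that $L_K(E)$ is an algebra of right quotients of the path algebra $KE$ (\cite[Proposition 2.2]{msm}) to push a homogeneous $x\in J$ into $KE$ by a right multiplication $x\mu$, and then appends a path $\lambda$ into $H$, with \cite[Lemma 1]{CMMSS1} guaranteeing $x\mu\lambda\neq 0$; note that this only ever multiplies on the right. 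Your argument via Theorem \ref{red} is viable too, but it needs two repairs.

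First, your plan for case (ii) --- ``extracting a vertex'' from $p(c,c^*)$ --- cannot work in general: for a cycle $c$ without exits the corner $s(c)L_K(E)s(c)$ is isomorphic to $K[x,x^{-1}]$, and an element such as $v+c$ (single-loop graph) is never reducible to $kw$ by multiplications; this is precisely why Theorem \ref{red} has two cases. The correct and immediate fix is that no extraction is needed: since $c$ has no exits, every path leaving $c^0$ stays in $c^0$, so the hypothesis that $s(c)$ connects to $H$ forces $c^0\cap H\neq\emptyset$, and heredity of $H$ then gives $c^0\subseteq H$, whence $p(c,c^*)=s(c)\,p(c,c^*)\in I(H)$ already. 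Second, and more seriously, the equivalence you ``recall'' between density of a one-sided ideal and essentiality is false in general rings and is a genuine ingredient here, not bookkeeping: the paper's first paragraph establishes it using left nonsingularity of Leavitt path algebras (\cite[Proposition 4.1]{SilesAQLPA} together with \cite[(8.7) Proposition]{lam}) and passes between one-sided and two-sided essentiality via \cite[(14.1) Proposition]{lam}, with \cite[2.3.5 Proposition]{Nast} reducing further to graded essentiality. Your corner-by-corner local-units sketch does not substitute for these bridges; moreover your own computation produces $\gamma^*\alpha^* a\beta\gamma$, an element of the two-sided ideal $L_K(E)aL_K(E)$ rather than of $L_K(E)a$, so the element-wise criterion you announced ($I(H)\cap L_K(E)a\neq 0$) is not what you actually verify. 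Without the nonsingularity and semiprimeness arguments, this mismatch leaves the stated one-sided density unproved even though your two-sided essentiality argument is essentially correct.
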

 \begin{proof}
 We first remark that since every Leavitt path algebra is left nonsingular (see \cite[Proposition 4.1]{SilesAQLPA}), the notions of dense left/right ideal and that of essential are equivalent in this context (by \cite[(8.7) Proposition]{lam}), and by  \cite[(14.1) Proposition]{lam}, $I(H)$ is essential as a left/right ideal if and only if it is essential as an ideal. Moreover, as $I(H)$ is a graded ideal, by \cite[2.3.5 Proposition]{Nast}  essentiality  and graded essentiality of $I(H)$ are equivalent. Hence, we will show that $I(H)$ is a graded essential ideal if and only if every vertex of $E^0$ connects to a vertex in $H$.
 
 Suppose first that $I(H)$ is a graded essential ideal of $L_K(E)$. Let $v\in E^0$. If $H \cap T(v) = \emptyset$, then Lemma \ref{interseccionhereditarios} would imply $I(H)\cap I(T(v)) = 0$, but this cannot happen as $I(H)$ is a graded essential ideal. Hence $H\cap T(v) \neq \emptyset$. This implies that $v$ connects to a vertex in $H$.
 
 Now we prove the converse, i.e.,  that $I(H)$ is an essential graded  ideal. Let $J$ be a nonzero graded ideal and pick a nonzero homogeneous element $x= uxv\in J$, where $u, v \in E^0$. Since the Leavitt path algebra $L_K(E)$ is an algebra of right quotients of $KE$, by  \cite[Proposition 2.2]{msm} (which is valid even for non necessarily row-finite graphs) there exists $\mu \in {\rm Path}(E)$ such that $0\neq x\mu\in KE$. Denote by $w$ the range of $\mu$. 
 By the hypothesis $w$ connects to a vertex in $H$, hence there exists $\lambda \in {\rm Path}(E)$ such that $w=s(\lambda)$ and $r(\lambda)\in H$. If $x\mu\lambda=0$ then $x\mu \in uL_K(E)w\cap KE$ would satisfy $\lambda\in {\rm Path}(E)\cap ran(x\mu)=\emptyset$,  by \cite[Lemma 1]{CMMSS1}, a contradiction, hence $0\neq x\mu\lambda\in I(H)\cap J$ which shows our claim.
  \end{proof}

%
\section{Extreme cycles}\label{extreme}
%

In this section we introduce the notion of extreme cycle. Roughly speaking it is a cycle such that every path starting at a vertex of the cycle comes back to it. The ideal generated by extreme cycles will be proved to be  a direct sum  of purely infinite simple Leavitt path algebras.

 \begin{definitions}\label{ece}{\rm
 Let $E$ be  a graph and $c$ a cycle in $E$. We say that $c$ is an {\it extreme cycle} if $c$ has exits and for every path $\lambda$ starting at a vertex in $c^0$ there exists $\mu\in \rm{Path}(E)$ such that $0\neq \lambda\mu$ and $r(\lambda\mu)\in c^0$. We will denote by $P_{ec}(E)$ the set of vertices which belong to extreme cycles.
 } 
 \end{definitions}

 \begin{definitions}\label{eceClass}{\rm 
  Let $X'_{ec}$ be the set of all extreme cycles in a graph $E$. We define in $X'_{ec}$ the following relation: given $c, d\in X'_{ec}$, we write $c \sim d$ whenever $c$ and $d$ are connected, that is, $T(c^0)  \cap d^0  \neq \emptyset$, equivalently, $T(d^0)  \cap c^0 \neq \emptyset$.
 It is not difficult to see that $\sim$ is an equivalence relation. Denote the set of all equivalence classes by $X_{ec} =X'_{ec}/\sim$. 
 When we want to emphasize the graph we are considering we will write $X'_{ec}(E)$ and $X_{ec}(E)$ for $X'_{ec}$ and $X_{ec}$, respectively.
 
 For any $c\in X'_{ec}$, let $\tilde c$ denote the class of $c$ and let use ${\tilde c}^0$ to represent the set of all vertices which are in the cycles belonging to $\tilde c$.}
\end{definitions} 
 The following will be of use.
 
 \begin{remark}\label{propECE} \rm{For a graph $E$ and using the notation described above:
 \begin{enumerate}[\rm (i)]
 \item For any $c\in X'_{ec}$, ${\tilde c}^0 =T(c^0)$, hence ${\tilde c}^0$ is a hereditary subset.
 \item  Given $c, d\in X'_{ec}$, $\tilde c \neq \tilde d$ if and only if ${\tilde c}^0 \cap {\tilde d}^0 = \emptyset$.
 \item For $c, d\in X'_{ec}$, $c\sim d$ if and only if $T(c) = T(d)$.
 \end{enumerate}}
 \end{remark}
 
  \begin{examples}
 Consider the following graphs 
 
$$ E \equiv \quad \quad
\xymatrix{\bullet \ar@(dl,ul)^e   \ar [r]^f  & {\bullet}  \ar@(u,r)^g    \ar@(r,d)^h } 
 \quad \quad \quad
 F \equiv \quad
  \xymatrix{\bullet \ar@(dl,ul)^{e} \ar@/^.5pc/[r]^{f} & \bullet \ar@/^.5pc/[l]^{g} \ar@(u,r)^{h_1} \ar@(r,d)^{h_2}}$$
\smallskip 

Then $X'_{ec}(E)=\{g, h\}$, $X_{ec}(E)=\{\tilde g\}$, $X'_{ec}(F) = \{e, fg, gf, h_1, h_2\}$ and $X_{ec}(F)=\left\{\tilde{e}\right\}$.
 \end{examples}
  \medskip
  
  Now we will analyse the structure of the ideal generated by $P_{ec}(E).$
  \medskip

  \begin{lemma}\label{eceIsPIS}
  Let $E$ be an arbitrary graph and $K$ any field. For every cycle $c$ such that $c\in X_{ec}'$, the ideal $I(\tilde c^0)$ is  isomorphic to a purely infinite simple Leavitt path algebra. Concretely, to $L_K({_H}E)$, where $H= {\tilde c}^0$.
\end{lemma}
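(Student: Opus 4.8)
The statement really has two parts, and the first is essentially free. Since $H:=\tilde{c}^{0}=T(c^{0})$ is a hereditary subset of $E^{0}$ by Remark \ref{propECE}(i), Lemma \ref{idhersetLPA} gives at once $I(\tilde{c}^{0})=I(H)\cong L_K({}_HE)$, which is the ``concretely'' part. So the whole content is to show that $L_K({}_HE)$ is purely infinite simple. The plan is to invoke the standard graph-theoretic characterization (see \cite{AAS}): $L_K(F)$ is purely infinite simple exactly when $F$ satisfies Condition (L), the only hereditary and saturated subsets of $F^{0}$ are $\emptyset$ and $F^{0}$, and every vertex of $F$ connects to a cycle. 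I would verify these three conditions for $F={}_HE$.

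The one observation that encodes the \emph{extreme} hypothesis, and which drives everything, is that every vertex of $H$ connects back to $c^{0}$. Indeed, if $w\in H=T(c^{0})$, pick $u\in c^{0}$ and a path $\lambda$ from $u$ to $w$; since $\lambda$ starts in $c^{0}$, Definition \ref{ece} yields $\mu$ with $0\neq\lambda\mu$ and $r(\lambda\mu)\in c^{0}$, and then $\mu$ is a path from $w$ to $c^{0}$. I would combine this with the shape of ${}_HE$: its vertex set is $H\sqcup F_E(H)$, the subgraph supported on $H$ is exactly the restriction of $E$ to $H$ (so $c$ itself is a cycle of ${}_HE$), and each new vertex $\alpha\in F_E(H)$ is a source emitting the single edge $\overline{\alpha}$ with range $r(\alpha)\in H$ and receiving no edges.

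For Condition (L): no new vertex lies on a cycle (new vertices receive no edges), so every cycle of ${}_HE$ is a cycle $d$ inside $H$. If such a $d$ had no exit, then every vertex of $d^{0}$ would have a unique outgoing edge, hence $T(v)=d^{0}$ for $v\in d^{0}$; but the return property gives $v\ge c^{0}$, so $c^{0}\subseteq d^{0}$, which is impossible because the extreme cycle $c$ has an exit at some vertex of $c^{0}\subseteq d^{0}$, contradicting that $d$ is exit-free. Thus every cycle of ${}_HE$ has an exit $e$ with $s(e)\in H$, whence $r(e)\in H$ and $e$ survives as an exit in ${}_HE$. For the hereditary–saturated condition: a nonempty hereditary saturated $S\subseteq{}_HE^{0}$ cannot meet only $F_E(H)$, since heredity along $\overline{\alpha}$ would drag $r(\alpha)\in H$ into $S$; so $S$ meets $H$, heredity pushes $S$ down to $c^{0}$, and then $c^{0}\ge H$ forces $H\subseteq S$, after which saturation absorbs every $\alpha\in F_E(H)$ (a regular vertex whose unique range $r(\alpha)$ lies in $H\subseteq S$), giving $S={}_HE^{0}$. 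Finally, $c$ is a cycle of ${}_HE$, every vertex of $H$ reaches it by the return property, and every $\alpha\in F_E(H)$ reaches it through $r(\alpha)$, so every vertex connects to a cycle. The three conditions together yield pure infiniteness and simplicity.

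The main obstacle is the Condition (L) step, that is, ruling out exit-free cycles inside $H$, because it is the only place that genuinely uses the extreme hypothesis (via the return property) rather than the mere construction of ${}_HE$; once that is in hand, the hereditary–saturated bookkeeping and the ``connects to a cycle'' check are routine consequences of the same return property.
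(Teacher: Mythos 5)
Your proposal is correct and follows essentially the same route as the paper: reduce to $L_K({}_HE)$ via Lemma \ref{idhersetLPA} and verify the three graph-theoretic conditions (Condition (L), no nontrivial hereditary saturated subsets, every vertex connects to a cycle) of the purely infinite simple characterization, with the extreme hypothesis entering through the return property of $H=T(c^0)$. Your Condition (L) step argues by out-degree counting where the paper instead asserts tersely that any cycle $d$ in $H$ satisfies $\tilde d=\tilde c$ and hence has an exit, but this is the same idea with more detail filled in, not a different approach.
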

\begin{proof}
Use Lemma \ref{idhersetLPA} to have $I(\tilde c^0)$ isomorphic to the Leavitt path algebra $L_K({_H}E)$ and let us show that this Leavitt path algebra is purely infinite and simple. For that, we will use the characterization  \cite[Theorem 4.3]{AA3}, which is valid for arbitrary graphs as can be proved by using the techniques described in \cite{G} or in \cite{ABS} in order to translate certain characterizations of  Leavitt path algebras from the row-finite case to the case of an arbitrary Leavitt path algebra.

Every vertex of ${_H}E$ connects to a cycle: take $v\in {_H}E^0$; if $v\in H$ then it connects to $c$, otherwise there exists a path $\mu\in {\rm Path}(E)$ such that $s(\mu)=v$ and $r(\mu)\in H$. Since $r(\mu)$ connects to the cycle $c$ in $E$, the vertex $v$ also connects to $c$ in ${_H}E$.

Every cycle in ${_H}E$ has an exit: this follows because any cycle in this graph comes from a cycle $d$ in $H$ and, by construction, $\tilde d=\tilde c$; this means that $d$ connects to $c$ and hence it has an exit which is an exit in ${_H}E$.

The only hereditary and saturated subsets of ${_H}E^0$ are $\emptyset$ and ${_H}E^0$: let $H'\in \mathcal{H}_{{_H}E}$ be non empty and consider $v \in H'$; if $v\in H$ then $H\subseteq H'$; since $H'$ is saturated, $H'={_H}E^0$; if $v\notin H$ then there exists $f\in {_H}E^1$ such that $v=s(f)$ and $r(f)\in H$; this implies $H\subseteq H'$; now, apply the construction of ${_H}E$ and that $H'$ is saturated to get $H'={_H}E^0$.
\end{proof}
  
  \begin{proposition}\label{DescriptionPece} 
  Let $E$ be any graph and $K$ any field. Then $I(P_{ec}(E))=\oplus_{\tilde c\in X_{ec}}I({\tilde c}^0)$ and every $I({\tilde c}^0)$ is isomorphic to a Leavitt path algebra which is purely infinite simple.
  \end{proposition}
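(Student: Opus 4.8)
The plan is to prove the two assertions of Proposition \ref{DescriptionPece} separately, using the groundwork already laid in the excerpt. The second assertion---that each $I(\tilde c^0)$ is isomorphic to a purely infinite simple Leavitt path algebra---is already exactly the content of Lemma \ref{eceIsPIS}, so that part requires no further argument beyond a citation. Thus the real work is establishing the direct sum decomposition $I(P_{ec}(E)) = \oplus_{\tilde c \in X_{ec}} I(\tilde c^0)$.

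For the decomposition, my strategy is to recognize $P_{ec}(E)$ as a disjoint union of hereditary subsets and then invoke Proposition \ref{sumher}. First I would observe that, by definition, $P_{ec}(E) = \bigcup_{\tilde c \in X_{ec}} \tilde c^0$, where the union runs over the equivalence classes of extreme cycles under $\sim$. By Remark \ref{propECE}(i), each $\tilde c^0 = T(c^0)$ is a hereditary subset of $E^0$, and by Remark \ref{propECE}(ii), distinct classes give disjoint sets: $\tilde c \neq \tilde d$ implies $\tilde c^0 \cap \tilde d^0 = \emptyset$. This is precisely the hypothesis of Proposition \ref{sumher} applied to the family $\{\tilde c^0\}_{\tilde c \in X_{ec}}$. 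That proposition then yields
$$I\left(\bigcup_{\tilde c \in X_{ec}} \tilde c^0\right) = \bigoplus_{\tilde c \in X_{ec}} I(\tilde c^0),$$
and since the left-hand side is $I(P_{ec}(E))$, the decomposition follows immediately.

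The one point that deserves care---and which I expect to be the main (though minor) obstacle---is verifying the set-theoretic identity $P_{ec}(E) = \bigcup_{\tilde c} \tilde c^0$ cleanly. By Definitions \ref{ece}, $P_{ec}(E)$ is the set of vertices lying on some extreme cycle; by Definitions \ref{eceClass}, $\tilde c^0$ is the set of vertices lying on cycles belonging to the class $\tilde c$. Every extreme cycle belongs to exactly one equivalence class, so every vertex of $P_{ec}(E)$ lies in some $\tilde c^0$, and conversely every vertex of every $\tilde c^0$ lies on an extreme cycle and hence in $P_{ec}(E)$. One must be slightly careful that $\tilde c^0$ as defined (vertices \emph{in the cycles} belonging to $\tilde c$) agrees with $T(c^0)$ as asserted in Remark \ref{propECE}(i); but since that remark is already stated and available, I may simply use it. With that identification in hand, the argument is essentially a two-line reduction to Proposition \ref{sumher} together with Lemma \ref{eceIsPIS}, and no substantial new difficulty arises.
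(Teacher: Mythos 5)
Your proposal is correct and takes essentially the same route as the paper: the paper's proof likewise decomposes $P_{ec}(E)=\sqcup_{\tilde c\in X_{ec}}{\tilde c}^0$, combines Remark \ref{propECE} with Proposition \ref{sumher} to obtain the direct sum, and cites Lemma \ref{eceIsPIS} for the purely infinite simple part. Your extra attention to verifying the set-theoretic identity $P_{ec}(E)=\bigcup_{\tilde c}\tilde c^0$ is a sound elaboration of a step the paper states without comment.
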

\begin{proof}
The hereditary set $P_{ec}(E)$ can be decomposed as: $P_{ec}(E)=\sqcup_{\tilde c\in X_{ec}}{\tilde c}^0$. By the Remark \ref{propECE} and the Proposition \ref{sumher}, $I(P_{ec}(E))= I(\sqcup_{\tilde c\in X_{ec}}{\tilde c}^0)=\oplus_{\tilde c\in X_{ec}}I({\tilde c}^0)$. Finally, observe that every 
$I({\tilde c}^0)$ is isomorphic to a purely infinite simple Leavitt path algebra by Lemma \ref{eceIsPIS}.
\end{proof}

\begin{lemma} \label{intersection}
For any graph $E$ the hereditary sets $P_l(E)$, $P_c(E)$ and $P_{ec}(E)$ are pairwise disjoint. Moreover, 
the ideal generated by their union is $I(P_l(E))\oplus I(P_c(E))\oplus I(P_{ec}(E))$.
\end{lemma}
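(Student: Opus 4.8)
The plan is to separate the statement into its two assertions and dispatch the decomposition by invoking Proposition \ref{sumher}, so that the genuine content lies in the pairwise disjointness. For the cases involving $P_l(E)$ I would argue straight from the definitions: a line point $v$ has a tree $T(v)$ containing no cycles, whereas any vertex in $P_c(E)$ or $P_{ec}(E)$ lies on a cycle, which necessarily sits inside its own tree. Hence $P_l(E)$ meets neither $P_c(E)$ nor $P_{ec}(E)$.

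The delicate case is $P_c(E)\cap P_{ec}(E)=\emptyset$, which I expect to be the main obstacle. Suppose $v$ lies on a cycle $c$ without exits; then every vertex $u\in c^0$ emits exactly one edge, namely the edge of $c$ leaving $u$. I would then show that $c$ is the only cycle through $v$: given any closed path $d=f_1\cdots f_n$ based at $v$, the edge $f_1$ is forced to be the $c$-edge at $v$ (there is no alternative), so $r(f_1)\in c^0$, and inductively every $f_i$ is a $c$-edge, whence $d$ is a power of $c$. In particular any cycle through $v$ coincides with $c$. Since an extreme cycle must have exits while $c$ has none, no extreme cycle passes through $v$, so $v\notin P_{ec}(E)$.

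For the second assertion I would recall that all three sets are hereditary: this is standard for $P_l(E)$ and $P_c(E)$ (the tree of a line point stays free of bifurcations and cycles under passing to descendants, and the tree of a vertex on a cycle without exits reduces to that cycle itself), and it was already noted for $P_{ec}(E)=\sqcup_{\tilde c\in X_{ec}}{\tilde c}^0$ in the proof of Proposition \ref{DescriptionPece}. Having established that $P_l(E)$, $P_c(E)$ and $P_{ec}(E)$ form a family of pairwise disjoint hereditary subsets, Proposition \ref{sumher} applies verbatim and yields
$$I\bigl(P_l(E)\cup P_c(E)\cup P_{ec}(E)\bigr)=I(P_l(E))\oplus I(P_c(E))\oplus I(P_{ec}(E)),$$
which is exactly the claimed decomposition. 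The only point worth double-checking is that the possible emptiness of one of the three sets causes no trouble, but this is harmless since $I(\emptyset)=0$ and the directness argument in Proposition \ref{sumher} is vacuous for an empty index.
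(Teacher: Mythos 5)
Your proposal is correct and follows essentially the same route as the paper, whose entire proof reads: disjointness holds by the definitions of $P_l(E)$, $P_c(E)$ and $P_{ec}(E)$, and the decomposition follows from Proposition \ref{sumher}. You merely spell out the details the paper leaves implicit (in particular, that a vertex on a cycle without exits lies on no other cycle, so it cannot belong to an extreme cycle, which by definition has exits), and your observations on heredity and on empty sets are accurate but already built into Proposition \ref{sumher} as the paper uses it.
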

\begin{proof} By the definition of $P_l(E)$, $P_c(E)$ and $P_{ec}(E)$, they are pairwise disjoint. To get the result, apply Proposition \ref{sumher}.
\end{proof}

The following ideal, will be of use in the rest of the paper, so we name it here.
\medskip

\begin{definition}\label{lce} For a graph $E$ we define
$$I_{lce}: =I(P_l(E))\oplus I(P_c(E))\oplus I(P_{ec}(E)).$$ 
\end{definition}

\begin{theorem}\label{idealDenso}   
Let $E$ be an arbitrary graph and $K$ any field.
Then:
\begin{enumerate}[\rm (i)]
\item $I_{lce}\cong \left(\oplus_{i\in \Lambda_1} M_{m_i}(K)\right) \oplus \left(\oplus_{j\in \Lambda_2} M_{n_j}(K[x, x^{-1}]) \right)\oplus \left(\oplus_{l\in \Lambda_3}I(\tilde{c}_{n_l}^0)\right)$, where $\Lambda_1$ is the index set of the sinks of $E$, $\Lambda_2$ is the index set of the cycles without exits in $E$ and $\Lambda_3$ indexes $X_{ec}(E)$.
\item  If $\vert E^0\vert < \infty$ then $I_{lce}$ is a dense ideal of $L_K(E)$.
\end{enumerate} 
\end{theorem}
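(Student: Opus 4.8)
The plan is to prove the two parts separately, with part (i) doing the structural bookkeeping and part (ii) deducing density from the finiteness hypothesis via Proposition \ref{density}.

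For part (i), I would start from the definition $I_{lce}=I(P_l(E))\oplus I(P_c(E))\oplus I(P_{ec}(E))$ (Definition \ref{lce}) and identify each of the three summands with the claimed form. The third summand is immediate: by Proposition \ref{DescriptionPece} we have $I(P_{ec}(E))=\oplus_{\tilde c\in X_{ec}}I(\tilde c^0)$, which is exactly $\oplus_{l\in\Lambda_3}I(\tilde c_{n_l}^0)$ after indexing $X_{ec}(E)$ by $\Lambda_3$. For the first two summands I would invoke the known structure results cited in the introduction: the ideal $I(P_l(E))$ is the socle, isomorphic to a direct sum of matrix rings $M_{m_i}(K)$ over $K$ indexed by the sinks of $E$ (references \cite{AMMS1, AMMS2, ARS}), and $I(P_c(E))$ is isomorphic to a direct sum of matrix rings $M_{n_j}(K[x,x^{-1}])$ indexed by the cycles without exits (references \cite{AAPS, ABS}). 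Assembling these three identifications gives the displayed isomorphism. The one point requiring a word of care is that the three ideals form a genuine internal direct sum inside $L_K(E)$; this is precisely the content of Lemma \ref{intersection}, so I would cite it to justify that $I_{lce}$ decomposes as an (external) direct sum of the three pieces rather than merely a sum.

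For part (ii), assume $\vert E^0\vert<\infty$. By Proposition \ref{density}, it suffices to show that every vertex $v\in E^0$ connects to a vertex in the hereditary set $H:=P_l(E)\cup P_c(E)\cup P_{ec}(E)$. The strategy is a descending/terminating argument on the tree $T(v)$. Since $E$ is finite, the tree $T(v)$ is finite, so any path out of $v$ must either terminate at a sink or eventually repeat a vertex, producing a cycle. If some path from $v$ reaches a sink, that sink lies in $P_l(E)\subseteq H$ and we are done. Otherwise $v$ connects to a cycle $c$; I would then split into cases according to the nature of $c$. If $c$ has no exit then $c^0\subseteq P_c(E)\subseteq H$. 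If $c$ has exits, consider whether every path leaving $c$ returns to $c$: if so, $c$ is an extreme cycle and $c^0\subseteq P_{ec}(E)\subseteq H$; if not, there is an exit leading to a vertex whose tree is strictly ``smaller'' in an appropriate sense, and I would iterate.

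The main obstacle is making the iteration in part (ii) terminate rigorously: one cannot simply say the tree gets smaller, because following an exit out of a cycle may lead to new vertices. The clean way to handle this is to set up a well-founded measure on the finite set of strongly connected components (condensation) of the subgraph on $T(v)$. Contracting each cycle to a point yields a finite acyclic digraph, so every maximal path in the condensation ends at a terminal component; a terminal strongly connected component of a finite graph is either a single sink (giving a line point in $P_l$) or a strongly connected set of cycles from which no path escapes, and a cycle in such a component is either exit-free (landing in $P_c$) or, having all its forward paths return, is extreme (landing in $P_{ec}$). Thus every vertex $v$ connects downstream to a terminal component meeting $H$, which is exactly the hypothesis needed for Proposition \ref{density}. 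I would present this condensation argument explicitly, since the finiteness of $E^0$ is used precisely to guarantee both that terminal components exist and that the classification of terminal components into the three types $P_l$, $P_c$, $P_{ec}$ is exhaustive.
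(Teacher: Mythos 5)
Your proposal is correct and takes essentially the same route as the paper's proof: part (i) assembles the known structure of $I(P_l(E))$ (the socle), of $I(P_c(E))$, and of $I(P_{ec}(E))$ via Proposition \ref{DescriptionPece}, with Lemma \ref{intersection} justifying the internal direct sum, and part (ii) reduces density to Proposition \ref{density} applied to the hereditary set $P_l(E)\cup P_c(E)\cup P_{ec}(E)$. The only differences are cosmetic: the paper first reduces to connected graphs via Corollary \ref{ConnectedComp} and simply asserts in one line that finiteness of $E^0$ forces every vertex to connect to a line point, a cycle without exits, or an extreme cycle, whereas your condensation-of-strongly-connected-components argument rigorously fills in exactly that assertion (and correctly handles the possible presence of infinite emitters, since only $\vert E^0\vert<\infty$ is assumed).
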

\begin{proof}
For our purposes we may suppose that the graph $E$ is connected because if $E=\sqcup_{i\in \Lambda}  E_i$ is the decomposition of $E$ into its connected components and we show the claims for every connected component, then the result will be true for $E$ by virtue of Corollary \ref{ConnectedComp}.

(i). We know that $I(P_l(E))$ is the socle of the Leavitt path algebra $L_K(E)$ (see \cite[Theorem 5.2]{AMMS2} and \cite[Theorem 1.10]{ARSs}), moreover $I(P_l(E))\cong \oplus_{i\in \Lambda_1} M_{m_i}(K)$, where $\Lambda_1$ ranges over the sinks of $E$ and for any $i\in \Lambda_1$, if $s_i$ is a sink (finite or not), then $m_i$ is the cardinal of the set of paths ending at $s_i$. 

The structure of $I(P_c(E))$ is also known: by \cite[Proposition 3.5]{AAPS} (which also works for arbitrary graphs) $I(P_c(E))\cong \oplus_{j\in \Lambda_2} M_{n_j}(K[x, x^{-1}])$, where the cardinal of $\Lambda_2$ is the cardinal of the set of cycles without exits and $n_j$ is the cardinal of the set of paths ending at a given cycle without exits $c_j$ and not containing the edges appearing in the cycle. 

Finally, the structure of the third summand in $I_{lce}$ follows by Proposition \ref{DescriptionPece}.

(ii). Take a vertex $v\in E^0$. Since $E^0$ is finite then $v$ connects to a line point, to a cycle without exists or to an extreme cycle. This means that  every vertex of $E$ connects to the hereditary set $H:=P_l(E) \cup P_c (E) \cup P_{ec}(E)$. By Proposition \ref{density} this means that $I(H)$ is a dense ideal of $L_K(E)$ and by Lemma \ref{intersection} it coincides with $I_{lce}$.
\end{proof}

\begin{remark}\label{obserP}{\rm
Following a similar reasoning as in the proof of Theorem \ref{idealDenso} it can be shown that any ideal of $L_K(E)$ generated by vertices in $P$ is isomorphic to 
$$\left(\oplus_{i\in \Lambda_1} M_{m_i}(K)\right) \oplus \left(\oplus_{j\in \Lambda_2} M_{n_j}(K[x, x^{-1}]) \right)\oplus \left(\oplus_{l\in \Lambda_3}I(\tilde{c}_{n_l}^0)\right)$$
for some $\Lambda_1$, $\Lambda_2$, $\Lambda_3$.}
\end{remark}

We specialize this result in the case of a prime Leavitt path algebra.

\begin{corollary}\label{structurePrime}
Let $E$ be a graph with a finite number of vertices such that $L_K(E)$ is a prime algebra. Then:
\begin{enumerate}[\rm (i)] 
\item There is a unique sink $v$ in $E$ and every vertex of $E$ connects to $v$. In this case $I(P_l(E))=I(T(v))\cong M_m(K)$, where $m$ is the cardinal of the set of paths ending at $v$, or
\item there is a unique cycle without exits $c$ and every vertex of $E$ connects to it. In this case $I(P_c(E))=I(c^0)\cong M_n(K[x, x^{-1}])$, where $n$ is the cardinal of the number of paths ending at $c$ and not containing all the edges of $c$, or
\item $ X_{ec}(E) =\{ \tilde c\}$, where $c$ is an extreme cycle and every vertex of $E$ connects to $c$. In this case $I(P_{ec}(E))= I(c^0)$ is a purely infinite simple unital ring.
\end{enumerate}
Moreover, the three cases are mutually exclusive. 
\end{corollary}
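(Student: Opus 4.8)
The plan is to combine Theorem~\ref{idealDenso} with the basic fact that in a prime ring two nonzero two-sided ideals cannot intersect trivially: their product lies in the intersection and is nonzero by primeness. Everything will be extracted by applying this observation first to the coarse decomposition of $I_{lce}$ and then to the refinements provided by Theorem~\ref{idealDenso}(i) and Proposition~\ref{DescriptionPece}.

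First I would note that, as $|E^0|<\infty$, Theorem~\ref{idealDenso}(ii) shows $I_{lce}$ is a dense, hence nonzero, ideal of $L_K(E)$. By Definition~\ref{lce} this is an internal direct sum $I_{lce}=I(P_l(E))\oplus I(P_c(E))\oplus I(P_{ec}(E))$, so the three summands have pairwise zero intersection. Were two of them nonzero at once, their product would vanish, contradicting primeness; hence exactly one summand is nonzero. This single statement already proves that the three alternatives are mutually exclusive and that precisely one of them holds.

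Next I would refine the unique nonzero summand by the same device. If $I(P_{ec}(E))\neq 0$, Proposition~\ref{DescriptionPece} presents it as the direct sum $\oplus_{\tilde c\in X_{ec}}I(\tilde c^0)$; primeness forces all but one summand to vanish, so $X_{ec}(E)=\{\tilde c\}$ and, since $\tilde c^0=T(c^0)$, one has $I(P_{ec}(E))=I(\tilde c^0)=I(c^0)$, which is purely infinite simple by Lemma~\ref{eceIsPIS}. If instead $I(P_l(E))\neq0$, then $P_l(E)\neq\emptyset$, and since $E$ is finite the forward line through any line point ends at a sink, so a sink exists; two distinct sinks $s_1,s_2$ have $T(s_1)=\{s_1\}$ and $T(s_2)=\{s_2\}$ disjoint, whence $I(s_1)\cap I(s_2)=0$ by Lemma~\ref{interseccionhereditarios}, impossible by primeness. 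Thus there is a unique sink $v$; as $v\in P_l(E)$ and every line point equals $\alpha\alpha^\ast$ for its line path $\alpha$ to $v$, one gets $I(P_l(E))=I(\{v\})=I(T(v))\cong M_m(K)$ via Theorem~\ref{idealDenso}(i). The case $I(P_c(E))\neq0$ is symmetric: distinct cycles without exits are vertex-disjoint (a shared vertex would create an exit), their vertex sets are hereditary, and Lemma~\ref{interseccionhereditarios} together with primeness leave a unique such cycle $c$ with $P_c(E)=c^0$ and $I(P_c(E))=I(c^0)\cong M_n(K[x,x^{-1}])$.

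Finally, to derive the connectivity conclusions I would use density: the unique nonzero summand equals $I_{lce}$ and is dense, so Proposition~\ref{density} gives that every vertex connects to the relevant hereditary set. In cases (i) and (ii) this set is $\{v\}$ (resp.\ $c^0$), so every vertex connects to $v$ (resp.\ $c$) directly. I expect the one genuinely delicate point to be case (iii), where Proposition~\ref{density} only yields connection to $\tilde c^0=T(c^0)$; to upgrade this to connection to $c^0$ I would use the defining property of an extreme cycle, namely that from any vertex reachable from $c^0$ one can return to $c^0$, so that $v\ge w$ with $w\in T(c^0)$ forces $v\ge u$ for some $u\in c^0$. The matrix-size counts and the remaining identifications are then routine consequences of the structure already recorded in Theorem~\ref{idealDenso}(i).
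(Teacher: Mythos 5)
The paper itself states this corollary without a written proof, offering it as a direct specialization of Theorem~\ref{idealDenso}, and your derivation is precisely the intended one: density of $I_{lce}$ makes it nonzero, the direct sum decomposition of Definition~\ref{lce} plus primeness forces exactly one of $I(P_l(E))$, $I(P_c(E))$, $I(P_{ec}(E))$ to be nonzero (which also gives mutual exclusivity), and each case is then refined by primeness applied to the finer decompositions -- sinks via Lemma~\ref{interseccionhereditarios}, pairwise disjoint no-exit cycles likewise, and the classes in Proposition~\ref{DescriptionPece} for the extreme case. Your upgrades are also sound: the line through a line point must end in a sink because finitely many vertices, no bifurcations (note an infinite emitter is a bifurcation, so only $\vert E^0\vert<\infty$ is needed, not finiteness of $E^1$) and no cycles force termination; $I(P_l(E))=I(\{v\})$ via $u=\alpha\alpha^*$; $I(\tilde c^0)=I(c^0)$ since $T(c^0)$ is the hereditary closure of $c^0$; and the passage from ``every vertex connects to $T(c^0)$'' (Proposition~\ref{density}) to ``every vertex connects to $c^0$'' via the defining return property of extreme cycles is exactly right and is indeed the one delicate point in (iii).

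The one item you never address is the word \emph{unital} in case (iii), and this is not a repairable omission in your argument: the claim is in fact problematic as stated. If an ideal $I$ of a unital ring has its own unit $e$, then $e$ is a central idempotent and $I=eL_K(E)$; in a prime unital ring this forces $I=0$ or $I=L_K(E)$, so unitality of $I(c^0)$ would amount to $I(c^0)=L_K(E)$, i.e., to the hereditary saturated closure of $c^0$ being all of $E^0$. This can fail under the stated hypotheses: let $E^0=\{u,v\}$ with a loop $d$ at $u$, an edge $e$ from $u$ to $v$, and two loops $f,g$ at $v$. Then $E$ is downward directed, so $L_K(E)$ is prime; there are no sinks and no cycles without exits; $f$ and $g$ are extreme cycles forming a single class with $\tilde f^0=c^0=\{v\}$, while $d$ is not extreme, so we are in case (iii). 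But $F_E(\{v\})=\{d^ke \,\vert\, k\geq 0\}$ is infinite, so by Lemma~\ref{idhersetLPA} the ideal $I(c^0)$ is isomorphic to the Leavitt path algebra of a graph with infinitely many vertices, hence has no unit (equivalently: $u\notin I(c^0)$, so $I(c^0)\subsetneq L_K(E)$, and a proper ideal of a prime unital ring cannot be unital). So your proof establishes everything in the corollary except unitality in (iii), which appears to be an overstatement in the corollary itself -- it holds exactly when $\overline{T(c^0)}=E^0$, e.g., when $F_E(T(c^0))$ is finite -- and it would improve your write-up to flag this explicitly rather than pass over it in silence. A final cosmetic remark: in case (ii), ``unique cycle without exits'' should be read up to rotation (the paper's Notation~\ref{notati} distinguishes $c_u$ from $c_v$), which your vertex-disjointness argument implicitly does.
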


%
\section{The center of the Leavitt path algebra of a row-finite graph}\label{centrofinito} 
%

In this section our aim is to completely describe the center of the Leavitt path algebra of a row-finite graph.
The key pieces for such a description are: the set of line points, the vertices in cycles without exits and the vertices in extreme cycles, jointly with an equivalence relation defined on $E^0$ whose set of classes is indexed in a subset of $P:=P_l(E)\cup P_c(E) \cup P_{ec}(E)$.

The extended centroid of the ideal generated by $P$ will coincide with the center of the Martindale symmetric ring of quotients of the Leavitt path algebra, notion that plays an important role.
\medskip

Recall that for an associative algebra $A$, the {\it center} of $A$, denoted $Z(A)$, is defined by:

$$Z(A):=\{x\in A \ \vert \ [x, a]=0 \ \hbox{for every}\ a \in A \},$$

\noindent
where  $[a, b]:=ab-ba$ and juxtaposition stands for the product in the algebra $A$.
\medskip

For a semiprime algebra $A$, the {\it extended centroid} of $A$, denoted by $\C(A)$ is defined as:

$$\C(A) = Z(Q_s(A)) = Z(Q_{max}^l(A)) =  Z(Q_{max}^r(A)),$$

\medskip
\noindent
where $Q_s(A)$, $Q_{max}^l(A)$ and $Q_{max}^r(A)$ are the Martindale symmetric ring of quotients of $A$, the maximal left ring of quotients of $A$ and the maximal right ring of quotients of $A$, respectively (see \cite[(14.18) Definition]{lam}).

\begin{lemma}\label{centroidPIS}
Let $A$ be a unital   simple algebra. Then $Z(A)=C(A)$.
\end{lemma}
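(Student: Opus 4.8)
The plan is to prove that for a unital simple algebra $A$, the center $Z(A)$ coincides with the extended centroid $\C(A)$. Since $\C(A)=Z(Q_s(A))$ by definition, it suffices to show that $Z(Q_s(A))=Z(A)$, where $Q_s(A)$ is the Martindale symmetric ring of quotients. The containment $Z(A)\subseteq \C(A)$ is standard: any central element of $A$ extends to a central element of $Q_s(A)$ because $A$ is an ideal of $Q_s(A)$ (being dense), so a fortiori $Z(A)\subseteq Z(Q_s(A))=\C(A)$.

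The substantive direction is $\C(A)\subseteq Z(A)$. First I would recall the key feature of a unital simple algebra: since $A$ is unital, $A$ itself is the only nonzero (two-sided) ideal, so in particular $A$ is its own unique dense ideal. The Martindale symmetric ring of quotients $Q_s(A)$ is built from pairs of maps defined on dense ideals; when $A$ is unital and simple, every dense ideal equals $A$, which forces $Q_s(A)=A$ itself (the multiplier-type construction collapses because there is no proper dense ideal to pass to, and the unit of $A$ serves as the unit of $Q_s(A)$). Hence $\C(A)=Z(Q_s(A))=Z(A)$ directly.

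Concretely I would argue as follows. Because $A$ is simple with $1$, the only nonzero ideal is $A$; the Martindale (symmetric) quotient ring is the direct limit of $\mathrm{Hom}$-type objects indexed over nonzero ideals $I$ with $\mathrm{Ann}(I)=0$, and here the only such $I$ is $A$ itself. A two-sided multiplier on $A$ that is compatible on left and right is, by unitality, given by multiplication by a fixed element $q=q\cdot 1$, and the symmetry condition $a(qb)=(aq)b$ holds automatically; thus the canonical embedding $A\hookrightarrow Q_s(A)$ is onto, giving $Q_s(A)\cong A$. Taking centers yields $\C(A)=Z(Q_s(A))=Z(A)$.

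The main obstacle I anticipate is making the identification $Q_s(A)=A$ clean rather than hand-wavy: one must invoke precisely the definition of $Q_s$ via dense ideals and verify that unitality forces every relevant multiplier to be an honest element of $A$. I would lean on the reference \cite[(14.18) Definition]{lam} for the construction and note that for a unital ring the maximal left quotient ring of a simple artinian-flavored situation need not equal $A$ in general—so the cleanest route is to argue directly at the level of $Q_s$ using simplicity plus the unit, rather than through $Q_{max}^l$ or $Q_{max}^r$. If one prefers to avoid identifying $Q_s(A)$ with $A$ outright, an alternative is to show that any $q\in Z(Q_s(A))$ satisfies $qa=aq$ for all $a\in A$, and then use that $1\in A$ together with the centrality relation to conclude $q=q\cdot 1\in A$, placing $q$ in $Z(A)$; this sidesteps the full structure of $Q_s(A)$ and is likely the smoothest write-up.
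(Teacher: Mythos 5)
Your proof is correct, but it takes a genuinely different route from the paper's. You prove the stronger statement that $Q_s(A)=A$ for a unital simple algebra: simplicity collapses the filter of dense two-sided ideals to $\{A\}$, and unitality identifies every multiplier on $A$ with multiplication by an element of $A$, whence $\C(A)=Z(Q_s(A))=Z(A)$ in one stroke. The paper never identifies $Q_s(A)$ with $A$; it argues element-wise using only the abstract property that $Q_s(A)$ is a right ring of quotients of $A$. For $Z(A)\subseteq\C(A)$ it assumes $x\in Z(A)$, $q\in Q_s(A)$ with $xq-qx\neq 0$, picks $a\in A$ with $(xq-qx)a\neq 0$ and $qa\in A$, and derives a contradiction; for $\C(A)\subseteq Z(A)$ it invokes that $\C(A)$ is a field (\cite[(14.22) Corollary]{lam}), chooses $x\in A$ with $0\neq q^{-1}x\in A$, uses simplicity and the unit to write $1=\sum_{i=1}^m a_iq^{-1}xb_i$, and multiplies by the central element $q$ to land $q=\sum_{i=1}^m a_ixb_i\in A$. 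Your approach buys brevity and a sharper conclusion ($Q_s(A)=A$), at the cost of opening up the construction of $Q_s$ and verifying the filter collapse carefully; the paper's approach buys independence from the internal construction, needing only the quotient-ring axioms plus the field property of the extended centroid. Two caveats in your write-up: the justification of the easy inclusion via ``$A$ is an ideal of $Q_s(A)$'' is not how that inclusion is proved in general (commutation with all of $Q_s(A)$ still requires a density computation, as in the paper's first paragraph), though this is moot once $Q_s(A)=A$ is established; and your closing ``alternative'' --- concluding $q=q\cdot 1\in A$ from centrality --- is circular as stated, since $q\cdot 1=q$ and placing it in $A$ requires exactly the containment $qA\subseteq A$ that the filter collapse provides, so you should retain the main argument rather than that shortcut.
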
 
\begin{proof}
We see first $Z(A)\subseteq \C(A)$. Suppose this containment is not true. Then there exists $x\in Z(A)$ and $q\in Q_s(A)$ such that $xq-qx \neq 0$.
Use that $Q_s(A)$ is a right ring of quotients of $A$ to find $a\in A$ such that $0\neq (xq-qx)a$ and $qa\in A$. Then $0\neq x(qa)-q(xa)= (qa)x-q(ax) =0$, a contradiction.

To prove $\C(A)\subseteq Z(A)$, consider $q\in \C(A)\setminus \{0\}$. By \cite[(14.22) Corollary]{lam}, $\C(A)$ is a field, hence there exist $q^{-1}\in \C(A)$. Use again that $Q_s(A)$ is a right ring of quotients of $A$ to find $x\in A$ such that $0\neq q^{-1}x\in A$. Since $A$ is simple and unital 
$Aq^{-1}xA=A$, in particular there exists a finite number of elements $a_1, \dots, a_m, b_1, \dots, b_m \in A$ such that $1=\sum_{i=1}^m a_iq^{-1}xb_i$.
Multiply this identity by $q$ and use that $q$ is in the center of $Q_s(A)$ to get $q=\sum_{i=1}^m a_ixb_i\in A$ as desired.
\end{proof}
\medskip

The following remarks and results  will be useful to study the center of a Leavitt path algebra. Their proofs are straightforward.

\begin{remark}\label{centrosumasdirectas}
{\rm If $A$ is an algebra and $\{I_i\}$ is a set of ideals of $A$ whose sum is direct, then $Z(\oplus_i I_i)= \oplus_i(Z(I_i))$. 
}
\end{remark}

\begin{lemma}\label{centrograd}
Let $G$ be an abelian group. The center of a ${G}$-graded algebra $A$  is  ${G}$-graded, that is, if $x\in Z(A)$ and $x=\sum_{g\in {G}}x_g$ is the decomposition of $x$ into its homogenous components, then $x_g\in Z(A)$ for every $g\in {G}$.
\end{lemma}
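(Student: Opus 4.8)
The plan is to reduce the verification of centrality to homogeneous test elements and then exploit the directness of the grading together with the commutativity of $G$. Write $A=\bigoplus_{g\in G}A_g$ with $A_gA_h\subseteq A_{gh}$, and let $x\in Z(A)$ have homogeneous decomposition $x=\sum_{g\in G}x_g$ with $x_g\in A_g$. Since the commutator $[\,\cdot\,,a]$ is $K$-linear in its second slot and $A$ is $K$-spanned by homogeneous elements, it suffices to prove that $[x_g,a]=0$ for every homogeneous $a$; so I would fix $h\in G$ and an arbitrary $a\in A_h$.

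The core computation is to locate the graded degree of each piece $[x_g,a]=x_ga-ax_g$. First I would observe that $x_ga\in A_{gh}$ and $ax_g\in A_{hg}$, and here is the one place the hypothesis is used: because $G$ is abelian, $gh=hg$, so both terms lie in the single component $A_{gh}$, whence $[x_g,a]\in A_{gh}$. Next I would note that as $g$ ranges over $G$ the products $gh$ are pairwise distinct (right translation by $h$ is a bijection of $G$), so the expression
$$[x,a]=\sum_{g\in G}[x_g,a]$$
is exactly the decomposition of $[x,a]$ into its homogeneous components, with the $g$-term sitting in the distinct component $A_{gh}$.

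To finish, I would invoke that $x\in Z(A)$ gives $[x,a]=0$, and then appeal to the directness of the sum $\bigoplus_{g}A_{g}$ (uniqueness of homogeneous components): a homogeneous decomposition equal to zero forces every component to vanish, so $[x_g,a]=0$ for each $g$. Since $a\in A_h$ and $h$ were arbitrary, and homogeneous elements span $A$, linearity yields $[x_g,b]=0$ for all $b\in A$, i.e.\ $x_g\in Z(A)$ for every $g$, as desired.

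There is no serious obstacle here; the argument is entirely formal. The only point requiring care is the use of the abelian hypothesis to guarantee $x_ga$ and $ax_g$ share a graded degree (otherwise $[x_g,a]$ need not be homogeneous and the separation-of-components argument collapses), together with the injectivity of multiplication by $h$ in $G$ that keeps the terms in distinct components.
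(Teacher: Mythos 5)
Your proof is correct and follows essentially the same route as the paper's: test $x$ against a homogeneous element, split the vanishing commutator into homogeneous pieces lying in the pairwise distinct components $A_{gh}$ (using $gh=hg$), conclude each $[x_g,a]=0$ by directness of the grading, and extend to all of $A$ by linearity. Your write-up is in fact more explicit than the paper's about where the abelian hypothesis and the injectivity of translation by $h$ are used, but the argument is the same.
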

\begin{proof}
Indeed, for every $y=\sum_{h\in {G}}y_h$ in $A$, 
$0=[x,\ y_h]= [\sum_{g\in {G}} x_g,\ y_h]= \sum_{g\in {G}} [x_g,\ y_h]$
 and using the grading on $A$ we get 
$[x_g,\ y_h]=0$ for any $g\in {G}$. Hence $0=\sum_{h\in {G}}[x_g,\ y_h] =[x_g,\ \sum_{g\in {G}}y_h]=[x_g,\ y]$ which means 
$x_g\in Z(A)$.
\end{proof}

\begin{notation}{\rm  The homogeneous component of degree $g$ in the center of a $G$-graded algebra $A$ will be denoted by $Z_g(A)$.}
\end{notation}

\begin{lemma}\label{idealcentro}
Let $I$ be an ideal of an algebra $A$. If for every $y\in Z(I)$ there exist $n\in \mathbb{N}$ and $\{a_i, b_i\}_{i=1}^n\subseteq Z(I)$ such that $y=\sum_{i=1}^na_ib_i$, then $Z(I)=I\cap Z(A)$.
\end{lemma}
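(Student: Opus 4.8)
The plan is to establish the equality $Z(I)=I\cap Z(A)$ by proving the two inclusions separately, with all the substance residing in one of them. The inclusion $I\cap Z(A)\subseteq Z(I)$ is immediate and does not use the hypothesis: if $x\in I$ commutes with every element of $A$, then in particular it commutes with every element of $I$, so $x\in Z(I)$.

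For the reverse inclusion $Z(I)\subseteq I\cap Z(A)$, note that $Z(I)\subseteq I$ holds by definition, so it suffices to show $Z(I)\subseteq Z(A)$; that is, every $y\in Z(I)$ must be shown to commute with every $a\in A$. This is where the hypothesis enters. I would fix $y\in Z(I)$ and $a\in A$, and use the hypothesis to write $y=\sum_{i=1}^n a_ib_i$ with all $a_i,b_i\in Z(I)$. The idea is to shuttle $a$ from the right-hand side of $ya$ all the way to the left, using at each step that one of the two central factors commutes with the ideal element sitting next to it. Concretely, since $b_i\in I$ and $I$ is an ideal, $b_ia\in I$, so the central element $a_i$ commutes with it, which rewrites $ya=\sum_i a_i(b_ia)=\sum_i b_i(aa_i)$. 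Now $aa_i\in I$ (again because $a_i\in I$), so the central element $b_i$ commutes with it, giving $\sum_i (aa_i)b_i=a\sum_i a_ib_i=ay$. Hence $ya=ay$, as required.

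The only genuine point — and the reason the hypothesis is phrased as it is — is that a single central factor does not suffice: an element $c\in Z(I)$ commutes with elements of $I$ but not a priori with an arbitrary $a\in A\setminus I$. The decomposition $y=\sum a_ib_i$ into products of \emph{pairs} of elements of $Z(I)$ is exactly what is needed to move $a$ past $y$ in two steps, each time relying on the ideal property (that both $b_ia$ and $aa_i$ lie in $I$) to place $a$ beside a factor with which the remaining central factor commutes. I therefore expect no serious obstacle beyond recognizing this two-step absorption argument; once it is in place, the verification is the short computation indicated above.
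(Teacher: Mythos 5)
Your proof is correct and is essentially identical to the paper's: both argue that $I\cap Z(A)\subseteq Z(I)$ trivially, and both establish $Z(I)\subseteq Z(A)$ by the same two-step absorption computation $ya=\sum_i a_i(b_ia)=\sum_i b_i(aa_i)=\sum_i(aa_i)b_i=ay$, using at each step that $b_ia$ and $aa_i$ lie in $I$ so the central factors $a_i,b_i\in Z(I)$ can be moved past them. Your remark explaining why products of \emph{pairs} of central elements are needed is a helpful gloss the paper omits, but the argument itself matches.
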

\begin{proof} It is clear that $I\cap Z(A)\subseteq Z(I)$. To show $Z(I)\subseteq Z(A)$, take 
 $y\in Z(I)$ and $x\in A$. Write $y=\sum_{i=1}^na_ib_i$, for $a_i, b_i\in Z(I)$. Then $yx=\sum_{i=1}^na_ib_ix=\sum_{i=1}^na_i(b_ix)=\sum_{i=1}^n(b_ix)a_i= \sum_{i=1}^nb_i(xa_i)= \sum_{i=1}^n(xa_i)b_i=xy$. 
\end{proof}

\begin{corollary}\label{centroideal}
Let $I$ be an ideal of a Leavitt path algebra $L_K(E)$ such that for every $y\in Z(I)$ there exist $a, b \in Z(I)$ such that $y=ab$.
 Then $Z(I)=I\cap Z(L_K(E))$. This happens, in particular, for every ideal of $L_K(E)$ generated by vertices in $P$.
\end{corollary}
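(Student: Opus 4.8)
The plan is to reduce Corollary \ref{centroideal} directly to Lemma \ref{idealcentro}, whose hypothesis only requires that each central element of the ideal factors as a sum of products of central elements. Here we are handed the stronger assumption that every $y\in Z(I)$ factors as a single product $y=ab$ with $a,b\in Z(I)$; taking $n=1$ in Lemma \ref{idealcentro} immediately yields $Z(I)=I\cap Z(L_K(E))$. So the first sentence of the statement is a one-line specialization and requires no further work.

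The substance is therefore in verifying the \emph{in particular} clause: that for any ideal $I=I(X)$ with $X\subseteq P=P_l(E)\cup P_c(E)\cup P_{ec}(E)$, every $y\in Z(I)$ admits a factorization $y=ab$ with $a,b\in Z(I)$. My approach is structural. By Remark \ref{obserP}, such an $I$ is isomorphic to a direct sum of blocks of the three shapes $M_{m_i}(K)$, $M_{n_j}(K[x,x^{-1}])$, and the purely infinite simple pieces $I(\tilde c^0_{n_l})$. By Remark \ref{centrosumasdirectas}, $Z(I)$ splits correspondingly as the direct sum of the centers of the blocks, and a factorization $y=ab$ can be assembled componentwise; hence it suffices to produce the desired factorization inside each individual block. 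Since the blocks carry local units (matrix rings over $K$ or $K[x,x^{-1}]$ have units in each finite corner, and the purely infinite simple pieces are unital), the natural choice in a unital block is $y=y\cdot 1$ with $a=y$ and $b=1$, both central; in the non-unital matrix blocks one works within a corner $e(\cdot)e$ containing $y$ where the local unit $e$ plays the role of $1$, and $e$ lies in the center of that corner.

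I would organize the argument as follows. First, state that $Z(I)=\bigoplus Z(\text{block})$ via Remarks \ref{obserP} and \ref{centrosumasdirectas}, so a central element $y$ is a finite sum $\sum y_k$ of central block components. Second, for each block produce $y_k=a_kb_k$ with $a_k,b_k$ central in that block: in a unital block take $a_k=y_k$, $b_k=$ the identity of the block (central by definition); for a center element of a (possibly non-unital) matrix block, note that $y_k$ and the relevant local unit both sit in a single corner isomorphic to a unital matrix ring and repeat the trick. Third, reassemble: setting $a=\sum a_k$ and $b=\sum b_k$ gives $a,b\in Z(I)$ with $ab=\sum a_kb_k=\sum y_k=y$, since the blocks are orthogonal ideals so cross terms $a_kb_{k'}$ vanish for $k\neq k'$.

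The main obstacle is the bookkeeping around units. The center of a direct sum of unital rings naturally contains the full identity, but $I$ itself need not be unital (it is only an ideal with local units), so I cannot literally write $y=y\cdot 1$ globally. The care needed is to carry out the factorization block-by-block using the identities of the unital blocks and corners of the non-unital matrix blocks, and to check that the central element $b$ assembled this way genuinely lies in $Z(I)$ rather than requiring an external identity. Because each $y\in Z(I)$ has finite support across the blocks and each block contributes a genuine central idempotent, this is a finite and clean construction; the purely infinite simple pieces being unital (as recorded in Corollary \ref{structurePrime}(iii)) removes any difficulty on that summand.
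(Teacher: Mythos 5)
Your route is, in outline, the paper's own: the first claim is Lemma \ref{idealcentro} with $n=1$, and the \emph{in particular} clause is handled by decomposing $I$ via Remark \ref{obserP} into blocks isomorphic to $M_{m_i}(K)$, $M_{n_j}(K[x,x^{-1}])$ and purely infinite simple pieces, splitting the center blockwise (Remark \ref{centrosumasdirectas}), and factoring componentwise with orthogonality killing the cross terms. Where the paper differs is in how it disposes of the blocks: it simply records that the center of each block is $0$, $K$, or $K[x,x^{-1}]$ (for the purely infinite simple summands, $0$ or $K$ by \cite[Theorem 3.6]{CMMSS1}), so every block with nonzero center has its center a unital ring whose unit is a central idempotent of the block, and $y_k=y_k\cdot 1_k$ does the job.

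This is exactly where your proposal has a genuine flaw. For the non-unital matrix blocks you propose to factor $y_k=y_k\cdot e$ where $e$ is a local unit with $y_k\in eI_ke$, on the grounds that \lq\lq $e$ lies in the center of that corner.\rq\rq\ True, but irrelevant: the hypothesis of the corollary requires $b\in Z(I)$, i.e., $e$ must be central in the whole ideal, and a local unit of a proper corner of an infinite matrix block is never central in that block --- in the ring of $\mathbb{N}\times\mathbb{N}$ matrices over $K$ with finitely many nonzero entries, $e=\mathrm{diag}(1,\dots,1,0,\dots)$ fails to commute with matrix units whose row or column index crosses the corner. So the element $b$ you assemble does not lie in $Z(I)$, and the verification you yourself flag as the \lq\lq care needed\rq\rq\ step cannot be completed as written. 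The repair is to notice that the case is vacuous: a non-unital (infinite) matrix block has zero center, so $Z(I_k)\neq 0$ forces that block to be unital with $Z(I_k)\cong K$ or $K[x,x^{-1}]$ --- precisely the observation the paper substitutes for your corner trick. Relatedly, your assertion that \lq\lq each block contributes a genuine central idempotent\rq\rq\ is false for the non-unital blocks (and for purely infinite simple pieces with zero center), though harmless once one knows those blocks contribute nothing to $Z(I)$. With that one-line correction your argument closes and coincides with the paper's.
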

\begin{proof} 

The first statement follows immediately from Lemma \ref{idealcentro} For $I$ generated by vertices in $P$, by Remark \ref{obserP},  $I$ is a direct sum of ideals of $L_K(E)$ which are isomorphic to $M_n(K)$, $M_n(K[x, x^{-1}])$ or $J$, for $J$ purely infinite and simple. In this last case, by \cite[Theorem 3.6]{CMMSS1}, $Z(J)$ is $0$ or isomorphic to $K$;  in the other cases, the centers are zero or isomorphic to $K$, or to $K[x, x^{-1}]$. In all of these situations our hypothesis on the ideal is satisfied.
\end{proof}

\begin{theorem}\label{centroide}
Let $E$ be graph such that $\vert E^0\vert < \infty$ and consider $I: =I_{lce}$. Then the extended centroid of $L_K(E)$ coincides with the extended centroid of $I$, $\mathcal{C}(I)$; moreover,  
$$\mathcal{C}(L_K(E)) = \mathcal{C}(I)\cong  \left(\oplus_{i=1}^m K\right) \oplus  \left(\oplus_{j=1}^n K[x, x^{-1}]\right)\oplus \left(\oplus_{l=1}^{n'} K\right),$$
 where $m$ is the number of sinks, $n$ is the number of cycles without exits and $n'$ is the number of equivalence classes of extreme cycles.
If $P_l(E)$, $P_c(E)$ or $P_{ec}(E)$ are empty, then the ideals they generate are zero and the corresponding summands in $\C(I)$ do not appear.
\end{theorem}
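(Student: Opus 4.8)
The plan is to reduce the computation of $\mathcal{C}(L_K(E))$ to that of $\mathcal{C}(I)$ for the dense ideal $I = I_{lce}$, and then to compute $\mathcal{C}(I)$ summand by summand using the direct-sum decomposition from Theorem \ref{idealDenso}(i). The first reduction should follow from the density of $I$ established in Theorem \ref{idealDenso}(ii): since $\vert E^0\vert<\infty$, $I$ is a dense (equivalently essential) ideal of $L_K(E)$, and a standard fact about the Martindale symmetric ring of quotients is that $Q_s(A)$ depends only on a dense ideal, i.e. $Q_s(A)\cong Q_s(I)$ whenever $I$ is a dense ideal of the semiprime algebra $A$. Taking centers gives $\mathcal{C}(L_K(E)) = Z(Q_s(L_K(E))) = Z(Q_s(I)) = \mathcal{C}(I)$. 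I would cite the relevant result from \cite{lam} for this invariance under passage to a dense ideal.

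\smallskip

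Once $\mathcal{C}(L_K(E))=\mathcal{C}(I)$ is in hand, the second step is to compute $\mathcal{C}(I)$ using that the extended centroid respects finite direct sums of ideals. By Theorem \ref{idealDenso}(i),
$$
I_{lce}\cong \Bigl(\oplus_{i=1}^m M_{m_i}(K)\Bigr)\oplus\Bigl(\oplus_{j=1}^n M_{n_j}(K[x,x^{-1}])\Bigr)\oplus\Bigl(\oplus_{l=1}^{n'} I(\tilde c_{n_l}^0)\Bigr),
$$
where $m,n,n'$ count sinks, cycles without exits, and equivalence classes of extreme cycles respectively (these indexing sets are finite because $\vert E^0\vert<\infty$). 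For a finite direct sum of semiprime algebras the extended centroid splits as the corresponding product of extended centroids, so it suffices to treat each type of summand. For a matrix algebra $M_k(R)$ over a commutative domain $R$ one has $\mathcal{C}(M_k(R))\cong \mathrm{Frac}(R)$ in general; here the point is rather that each simple or ``matrix over $K[x,x^{-1}]$'' summand $S$ is in fact a \emph{unital simple} algebra (or a direct sum of such after the connected-component reduction), so that by Lemma \ref{centroidPIS} its extended centroid coincides with its \emph{center}, $\mathcal{C}(S)=Z(S)$. Thus I would reduce the extended-centroid computation to the already-known center computations: $Z(M_{m_i}(K))\cong K$, $Z(M_{n_j}(K[x,x^{-1}]))\cong K[x,x^{-1}]$, and, for the purely infinite simple summands $I(\tilde c^0)$, $Z\cong K$ (or $0$) — the latter via Lemma \ref{centroidPIS} together with the structure from Lemma \ref{eceIsPIS} and \cite[Theorem 3.6]{CMMSS1}. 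Assembling these gives exactly the claimed $\bigl(\oplus_{i=1}^m K\bigr)\oplus\bigl(\oplus_{j=1}^n K[x,x^{-1}]\bigr)\oplus\bigl(\oplus_{l=1}^{n'}K\bigr)$, and the final sentence about empty $P_l,P_c,P_{ec}$ is immediate since an empty generating set yields the zero ideal and hence no corresponding summand.

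\smallskip

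\textbf{The main obstacle} I anticipate is the first step: justifying cleanly that $\mathcal{C}(L_K(E))=\mathcal{C}(I)$ for a dense ideal $I$. The invariance of the extended centroid (equivalently of $Z(Q_s)$) under replacing a semiprime algebra by a dense/essential ideal is the crucial technical input, and one must make sure the hypotheses match: $L_K(E)$ is semiprime (indeed left nonsingular, as noted before Proposition \ref{density}), and $I$ is essential as a two-sided ideal (Theorem \ref{idealDenso}(ii) plus the equivalences recalled in the proof of Proposition \ref{density}). Care is also needed to match the matrix-over-$K[x,x^{-1}]$ summand with Lemma \ref{centroidPIS}: that lemma is stated for unital simple algebras, whereas $M_{n_j}(K[x,x^{-1}])$ is not simple, so for those summands I would instead invoke the direct computation of the center together with the fact that for these well-behaved algebras the extended centroid equals the center of the unital simple factors (or equals $\mathcal C$ of the domain $K[x,x^{-1}]$, which is $K[x,x^{-1}]$ itself). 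Everything else is a routine bookkeeping of finite direct sums, and the finiteness of $E^0$ guarantees all index sets are finite so that the direct-sum/extended-centroid interchange is unproblematic.
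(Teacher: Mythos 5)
Your overall route is exactly the paper's: Theorem \ref{idealDenso}(ii) together with the invariance of the Martindale symmetric ring of quotients under passage to a dense (equivalently, essential) ideal --- the paper cites \cite[(14.14) Theorem]{lam} --- gives $Q_s(L_K(E))=Q_s(I)$ and hence $\mathcal{C}(L_K(E))=\mathcal{C}(I)$; the splitting of $\mathcal{C}$ over the finite direct sum $I(P_l)\oplus I(P_c)\oplus I(P_{ec})$ is \cite[Lemma 1.3 (i)]{msm} in the paper; and for the purely infinite simple summands the paper, like you, combines Lemma \ref{centroidPIS} with Proposition \ref{DescriptionPece}, quoting \cite[Theorem 4.2]{AC} for $Z\cong K$ where you quote \cite[Theorem 3.6]{CMMSS1} --- an immaterial difference. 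The paper also first reduces to connected components, as in Theorem \ref{idealDenso}. So your first step, the direct-sum bookkeeping, and the $P_l$ and $P_{ec}$ summands all match the paper.

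The genuine problem is your handling of the summands $M_{n_j}(K[x,x^{-1}])$, and as written it is internally contradictory: you first (correctly) record that $\mathcal{C}(M_k(R))\cong \mathrm{Frac}(R)$ for a commutative domain $R$, and then assert that $\mathcal{C}$ of the domain $K[x,x^{-1}]$ ``is $K[x,x^{-1}]$ itself.'' The latter is false. Since $K[x,x^{-1}]$ is a domain, every nonzero ideal is dense, and one checks $Q_s(K[x,x^{-1}])=K(x)$, so $\mathcal{C}(K[x,x^{-1}])=K(x)$; more structurally, $M_{n_j}(K[x,x^{-1}])$ is a prime ring, and by \cite[(14.22) Corollary]{lam} --- the very result invoked in the proof of Lemma \ref{centroidPIS} --- the extended centroid of a prime ring is a \emph{field}, which $K[x,x^{-1}]$ is not. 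Nor can Lemma \ref{centroidPIS} be stretched to cover this case, since $M_{n_j}(K[x,x^{-1}])$ is neither simple nor a direct sum of unital simple factors, exactly as you worried. A faithful execution of your (and the paper's) argument therefore produces $\oplus_{j=1}^n K(x)$ in the middle slot, not $\oplus_{j=1}^n K[x,x^{-1}]$; it is worth noting that the paper's own proof is silent on precisely this summand, assembling the displayed formula without ever justifying $\mathcal{C}(I(P_c(E)))\cong \oplus_j K[x,x^{-1}]$, so the gap you tried to paper over is present there too --- but your proposal cannot be counted as closing it, since the step you supply to close it is the false identity $\mathcal{C}(K[x,x^{-1}])=K[x,x^{-1}]$. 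A secondary, smaller caveat: with $\vert E^0\vert<\infty$ but infinite emitters allowed, the sizes $m_i$ can be infinite, so a summand $M_{m_i}(K)$ (and likewise $L_K({}_HE)$ in Lemma \ref{eceIsPIS}) may fail to be unital, in which case its center is $0$ while its extended centroid is still $K$; so for those summands one should compute $\mathcal{C}$ of a simple (possibly non-unital) ring directly rather than through Lemma \ref{centroidPIS}.
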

\begin{proof}
As in the proof of Theorem \ref{idealDenso} we may suppose that our graph is connected.
Apply Theorem \ref{idealDenso} (ii) and \cite[(14.14) Theorem]{lam} to obtain $Q_s(L_K(E))=Q_s(I)$. Then by  \cite[Lemma 1. 3 (i)]{msm} $ \mathcal C(L_K(E))={\mathcal C}(I)= {\mathcal C}\left(I(P_l(E))\right)\oplus  {\mathcal C}\left(I(P_c(E))\right) \oplus   {\mathcal C}\left(I(P_{ec}(E))\right)$.

By Theorem \ref{idealDenso} (i), Lemma \ref{centroidPIS} and Proposition \ref{DescriptionPece}, $\mathcal C (I(P_{ec}(E))) = \oplus_{l=1}^{n'} \C(I(\tilde c_l^0))$, where $\tilde c_l\in X_{ec}$ and $n'=\vert X_{ec}\vert$. Use \cite[Theorem 4.2]{AC} to obtain $\C(I(\tilde c_l^0))\cong K$ for every $l$. 

To finish, use the three pieces of information in the paragraphs before to obtain $C(L_K(E))=\mathcal C(I)=\left(\oplus_{i=1}^m K\right) \oplus  \left(\oplus_{j=1}^n K[x, x^{-1}]\right)\oplus \left(\oplus_{l=1}^{n'} K\right)$ and we get the claim in the statement.
\end{proof}
\medskip

\begin{definitions}\label{classextendido}{\rm 
Let  $u,v \in E^0$. The element $\lambda_{u,v}$ will denote a path such that $s(\lambda_{u,v})=u$ and $r(\lambda_{u,v})=v.$

 In $E^0$ we define the following relation: given $u,v \in E^0$ we write $u\sim^1 v$ if and only if $u=v$ or: 
\begin{enumerate}
\item [\rm (i)] $u\leq v$ or $v\leq u$ and there are no bifurcations at any vertex in $T(u)$ and $T(v)$.
\item [\rm (ii)]
there exist a cycle $c$, a vertex $w\in c^{0}$ and  $\lambda_{w,u}$,$\lambda_{w,v}$  $\in Path(E)$.
\end{enumerate}
This relation $\sim^1$ is reflexive and symmetric. Consider the transitive closure of  $\sim^1$; we shall denote it  by $\sim$. The notation $[v]$ will stand for the class of a vertex $v$.}
\end{definitions}

\begin{example} {\rm
The vertices $u$ and $v$ in the following graph are related:
$$\xymatrix@-0.5pc{
& &\bullet^{u} & &\\
&\bullet\ar@(ul,dl)[] \ar[r]\ar[ur]&\bullet^{v} &&\\
&&&&}
$$}
\end{example}
\begin{remark} {\rm
The following is an example of a graph which illustrates why do we need to consider the transitive closure of the relation $\sim^1$ in Definition \ref{classextendido}. Note that $u\sim^1 v$, $v \sim^1 w$ and $u\not\sim^1 w$; however, $u\sim w$. 

$$\xymatrix@-0.5pc{
& &\bullet^{u} & &\\
&\bullet\ar@(ul,dl)[] \ar[r]\ar[ur]&\bullet^{v} & \bullet \ar@(ur,dr)[]\ar[l]\ar[dl]&\\
&&\bullet^{w}&&}
$$
}
\end{remark}

In what follows we are going to describe the zero component of the center of a Leavitt path algebra $L_K(E)$ associated to a row-finite graph.

\begin{notation} {\rm Let $E$ be an arbitrary graph. Consider $P=P_l\cup P_c \cup P_{ec}$ and  define $X=P/\sim$.
Decompose $P=P_f\sqcup P_\infty$, where $P_f$ are those elements $v$ of $P$ such that 
\begin{enumerate}[\rm (i)]
\item $\vert \overline{[v]}\vert < \infty $, and
\item $\vert F_E( \overline{[v]})\vert < \infty $
\end{enumerate}
In the same vein we decompose $X=X_f\sqcup X_\infty$, where 
$$X_f=\{[u]\in X\ \vert \ \text{for all}\ v\in [u],\ v\in P_f  \}$$
and
$$X_\infty= \{[u]\in X\ \vert \ \text{for some}\ v\in [u],\ v\in P_\infty \}.$$

Finally, we decompose $X_f=X_f^l \cup X_f^c \cup X_f^{ec}$, where each of these subsets consists of equivalence classes induced by elements which are in $P_f\cap P_l$, in $P_f \cap P_c$ and in $P_f \cap P_{ec}$, respectively. Note that if $u$ and $v$ are vertices in $P_f$, then $u\sim v$ if and only if $u, v\in P_l$, $u, v \in P_c$ or $u, v\in P_{ec}$. 

When we want to emphasize the graph $E$ we are considering, we will write $P_l(E), X(E)$, etc.
}
\end{notation}

\begin{lemma}\label{disyuncion}
Let $E$ be an arbitrary graph and $u, v \in P$. Then:

\begin{enumerate}[\rm (i)]
\item  $[u]$ is a hereditary set.
\item If $u\not\sim v$ then  $\overline {[u]} \cap \overline{[v]} = \emptyset$.
\end{enumerate}
\end{lemma}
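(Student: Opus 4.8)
The plan is to establish (i) first and then obtain (ii) almost for free from Lemma~\ref{Interseccion}. Since a subset $H\subseteq E^0$ is hereditary exactly when $T(x)\subseteq H$ for all $x\in H$, proving (i) amounts to showing $T(x)\subseteq[u]$ for every $x\in[u]$. The one fact I would isolate at the outset is that \emph{a vertex $w$ lying on a cycle satisfies $T(w)\subseteq[w]$}: for any $y\in T(w)$, the cycle through $w$ together with the trivial path $\lambda_{w,w}=w$ and a path $\lambda_{w,y}$ realizing $w\ge y$ give $w\sim^1 y$ through clause~(ii) of Definition~\ref{classextendido}, so $y\in[w]$. This is precisely the mechanism by which clause~(ii) absorbs entire (hereditary) trees into a class.

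I would then prove $T(x)\subseteq[u]$ for all $x\in[u]$ by induction on the length of a $\sim^1$-chain from $u$ to $x$, which is finite because $\sim$ is the transitive closure of $\sim^1$. Base case $x=u\in P$: if $u\in P_c\cup P_{ec}$ then $u$ lies on a cycle and the isolated fact gives $T(u)\subseteq[u]$; if $u\in P_l$ then $T(u)$ has no bifurcations, so each $y\in T(u)$ is comparable to $u$ with $T(u),T(y)$ bifurcation-free, and clause~(i) yields $u\sim^1 y$, again $T(u)\subseteq[u]$. Inductive step: let $a\in[u]$ with $a\sim^1 x$. If this holds via clause~(i), then $T(x)$ has no bifurcations, so each $y\in T(x)$ satisfies $x\sim^1 y$ by clause~(i), whence $T(x)\subseteq[x]=[u]$. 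If it holds via clause~(ii), there are a cycle, a vertex $w$ on it and paths $\lambda_{w,a},\lambda_{w,x}$; then $w\sim^1 a$ (again by clause~(ii), using $\lambda_{w,w}=w$), so $w\in[a]=[u]$, and since $w$ lies on a cycle the isolated fact gives $T(w)\subseteq[u]$; as $x\in T(w)$ this yields $T(x)\subseteq T(w)\subseteq[u]$. This proves (i).

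For (ii), if $u\not\sim v$ then $[u]$ and $[v]$ are distinct equivalence classes, hence disjoint, and by (i) both are nonempty hereditary subsets of $E^0$; Lemma~\ref{Interseccion}(ii) then gives $\overline{[u]}\cap\overline{[v]}=\overline{[u]\cap[v]}=\overline{\emptyset}=\emptyset$. The genuinely non-routine point is the dichotomy driving the induction: clause~(i) only ever produces vertices with bifurcation-free trees, whereas clause~(ii) can relate $u$ to vertices sitting on other cycles and far ``above'' it. The danger is that such a step pulls in a vertex whose tree escapes the class; recognizing that any cycle vertex belonging to the class contributes its whole hereditary tree (the isolated fact) is exactly what neutralizes this, and it is the reason the induction must carry the stronger hypothesis $T(x)\subseteq[u]$ rather than merely $x\in[u]$.
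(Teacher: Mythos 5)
Your proof is correct, and for part (ii) it is exactly the paper's argument: distinct classes are disjoint, so (i) together with Lemma~\ref{Interseccion} gives $\overline{[u]}\cap\overline{[v]}=\overline{[u]\cap[v]}=\emptyset$. For part (i) your route is organized differently. The paper proves one-step heredity: given $w\in[u]$, a successor $w'\in r(s^{-1}(w))$, and a chain $w=v_1\sim^1 v_2\sim^1\cdots\sim^1 v_n=u$, it simply notes that $w'\sim^1 v_2$ (the edge is shifted across the first link: appended to $\lambda_{z,w}$ in a clause-(ii) link, or handled by bifurcation-freeness in a clause-(i) link), so membership in $[u]$ propagates edge by edge with the chain length preserved. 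You instead prove the stronger invariant $T(x)\subseteq[u]$ for every $x\in[u]$ by analyzing the \emph{last} link of the chain, with the absorption fact that a cycle vertex $w$ satisfies $T(w)\subseteq[w]$ via clause (ii) and the trivial path $\lambda_{w,w}=w$ (legitimate, since the paper treats vertices as paths of length zero). Your version buys explicitness: it covers the base case $x=u\in P$ and the degenerate situations that the paper's terse \lq\lq Note that $w'\sim^1 v_2$\rq\rq\ leaves to the reader. One harmless redundancy: your induction hypothesis is never used in either clause of the inductive step (only in the trivial case $a=x$), so your \lq\lq induction\rq\rq\ collapses to a direct case analysis of the final $\sim^1$-link; both arguments ultimately rest on the same dichotomy between the two clauses of Definitions~\ref{classextendido}.
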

\begin{proof}
(i). Let $ w\in [u]$ and consider $w'\in r(s^{-1}(w))$. Since $w\sim u$ there exists a finite set 
$\{v_1\dots v_n\}$ of vertices such that $w=v_1\sim^1v_2\sim^1\dots \sim^1v_n=u$. Note that $ w'\sim^1v_2$ and so $w'\in [u]$.   

(ii). By the hypothesis $[u]\cap [v]=\emptyset$. Use (i) and Lemma \ref{Interseccion} to get the result.
\end{proof}

\begin{definition}\label{estandarf}{\rm
Let $E$ be a graph and $K$ be any field. An element $a\in L_K(E)$ which can be written as
$$a=\sum_{{[v]\in {X_f}}}k_{[v]}a_{[v]}, \quad \text{where}\quad  k_{[v]}\in K^\times \quad \text{and} \quad
a_{[v]}=\sum_{u\in \overline{[v]}}u +\sum_{\alpha\in F_E({\overline{[v]}})}\alpha\alpha^*,$$
will be said to be written in the \it{standard form}.}
\end{definition}

We will prove that every element in the zero component of the center of a Leavitt path algebra can be written in the standard form.

\begin{lemma}\label{multipillin} 
Let   $E$ be an arbitrary graph and $K$ be any field. Consider $[v]\in X$. For every $u\in \overline{[v]}$ and  $\alpha, \beta \in F_E({\overline{[v]}})$ we have: 
\begin{enumerate}[\rm (i)]
\item If $s(\alpha)=s(\beta)$, then $ \alpha^*\beta \neq 0$ if and only if $\alpha =\beta$.
\item  If $s(\alpha)\neq s(\beta)$ then $ \alpha^*\beta= 0$.
\item $u\alpha=0$.
\end{enumerate}
\end{lemma}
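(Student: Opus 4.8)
The lemma concerns paths $\alpha, \beta \in F_E(\overline{[v]})$, which by definition are paths whose source lies outside $\overline{[v]}$, whose intermediate vertices lie outside $\overline{[v]}$, and whose range lands in $\overline{[v]}$. The key structural feature I would exploit is that each such path *first enters* $\overline{[v]}$ only at its final vertex $r(\alpha)$. Let me sketch the three parts.

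**Planning the proof.** For part (iii), $u\alpha = 0$ is the quickest: since $u \in \overline{[v]}$ and $s(\alpha) \notin \overline{[v]}$, and $\overline{[v]}$ is hereditary and saturated (so in particular $u \neq s(\alpha)$), the relation (V), namely $vw = \delta_{v,w}v$, forces $u \cdot s(\alpha) = 0$, and since $\alpha = s(\alpha)\alpha$ by (E1), we get $u\alpha = u s(\alpha) \alpha = 0$.

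For parts (i) and (ii), write $\alpha = e_1 \cdots e_n$ and $\beta = f_1 \cdots f_m$ as edge sequences. The plan is to compute $\alpha^* \beta = e_n^* \cdots e_1^* f_1 \cdots f_m$ and repeatedly apply the Cuntz–Krieger relation (CK1), $e^* e' = \delta_{e,e'} r(e)$. First I would handle part (ii): if $s(\alpha) = s(e_1) \neq s(f_1) = s(\beta)$, then $e_1 \neq f_1$, so the innermost product $e_1^* f_1 = \delta_{e_1, f_1} r(e_1) = 0$, collapsing $\alpha^*\beta$ to $0$. For part (i), assume $s(\alpha) = s(\beta)$ and $\alpha^*\beta \neq 0$. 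I would argue inductively that nonvanishing forces $e_1 = f_1$, then $e_2 = f_2$, and so on: at each stage $e_i^* f_i$ must equal $r(e_i)$ rather than $0$, so $e_i = f_i$. The subtle point requiring the $F_E$ structure is ensuring the two paths have the *same length* $n = m$: if one path were a proper prefix of the other, say $n < m$, then after cancelling we would be left with $r(e_n) f_{n+1} \cdots f_m = r(\alpha) f_{n+1} \cdots f_m$ with $r(\alpha) \in \overline{[v]}$ but $s(f_{n+1}) = r(e_n) = r(\alpha) \in \overline{[v]}$ contradicting that $f_{n+1}$ is an intermediate edge of $\beta \in F_E(\overline{[v]})$, whose source must lie *outside* $\overline{[v]}$. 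Thus $n = m$ and $\alpha = \beta$; the converse $\alpha = \beta \Rightarrow \alpha^*\alpha = r(\alpha) \neq 0$ is immediate.

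**The main obstacle.** The only genuinely delicate step is the length-matching argument in part (i): I must use crucially that $\beta$ belongs to $F_E(\overline{[v]})$, so that no intermediate vertex of $\beta$ lies in $\overline{[v]}$. Once cancellation of a full path $\alpha$ forces an intermediate vertex of the longer path into $\overline{[v]}$, the definition of $F_E$ yields the contradiction. I would be careful to state explicitly that the remaining edge $f_{n+1}$ satisfies $s(f_{n+1}) = r(e_n) \in \overline{[v]}$, which is forbidden since for $\beta = f_1 \cdots f_m \in F_E(\overline{[v]})$ one has $r(f_i) \in E^0 \setminus \overline{[v]}$ for all $i < m$, hence $s(f_{n+1}) = r(f_n) \notin \overline{[v]}$.
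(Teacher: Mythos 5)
Your proof is correct and takes essentially the same approach as the paper's: the paper invokes the standard fact that $\alpha^*\beta\neq 0$ forces $\alpha=\beta\gamma$ or $\beta=\alpha\delta$ (which you simply re-derive edge-by-edge from (CK1)) and then, exactly as you do, rules out a proper extension because it would put an intermediate vertex of a path in $F_E(\overline{[v]})$ inside $\overline{[v]}$. Part (iii) is in both cases the one-line observation that $u\alpha\neq 0$ would force $u=s(\alpha)$, impossible since $s(\alpha)\notin\overline{[v]}$.
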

\begin{proof}
(i). If $\alpha^*\beta \ne 0$ then $\alpha=\beta \gamma$ or  $\beta=\alpha\delta $ for some $\gamma, \delta \in \text {Path}(E)$. By the definition of $ F_E({\overline{[v]}})$, necessarily $\alpha=\beta$.

(ii). This case follows immediately.

(iii). For $u$ and $\alpha$ as in the statement, $u\alpha\neq 0$ implies $u=s(\alpha)$, but this is not possible as $\alpha\in F_E(\overline{[v]})$.
\end{proof}

\medskip

\begin{notation}{\rm
For a graph $E$ we denote by ${P}_e$ the set of vertices in cycles with exits.
}
\end{notation}
\begin{lemma}\label{capullete}
Let  $E$ be an arbitrary graph. Then, for every $a \in Z_0$ and $v\in P\cup P_e$ there exists $k_v\in K$ such that if $u \in \overline{[v]}$ then $uau=k_vu$.
\end{lemma}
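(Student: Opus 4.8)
Fix $a\in Z_0$, so $a$ is a homogeneous element of degree $0$ lying in the center of $L_K(E)$. The goal is to show that for each $v\in P\cup P_e$ the scalar $uau$ depends only on the class $\overline{[v]}$, not on the particular vertex $u\in\overline{[v]}$. The first reduction is to understand $uau$ for a single vertex $u$. Since $a\in Z_0$ and $u$ is an idempotent, $uau=uau\cdot u = u\cdot uau$, so $uau=ua$ is a homogeneous element of degree $0$ in the corner $uL_K(E)u$. I would first argue that $uau$ must be a scalar multiple of $u$: if $uau$ were not of this form, apply Corollary \ref{redhom} to the nonzero homogeneous element $uau-k u$ (for the appropriate candidate scalar) to extract $\alpha^\ast(uau)\beta=k'w$ with $w\in E^0$, and then use centrality of $a$ together with the structure of the vertices reachable from $u$ to force a contradiction. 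The cleanest route is to invoke that a central degree-zero element commutes with every edge $e$ with $s(e)=u$, so $uae = eae = ea\,r(e)$, which propagates the value of $uau$ along edges and pins it down as a scalar $k_u$ with $uau=k_u u$.

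\textbf{Propagation along edges and the role of $\sim^1$.} The heart of the argument is to show $k_u$ is constant on $\overline{[v]}$. I would establish this in two stages. First, for vertices related by a single step $\sim^1$, analyze the two defining cases of Definition \ref{classextendido} separately. In case (i), where $u\le w$ (or $w\le u$) along a bifurcation-free, cycle-free segment, there is a unique edge $e$ with $s(e)=u$ and $r(e)=w$ on that segment; centrality gives $k_u u e = uae = eae = k_w e$ after multiplying $uau=k_u u$ on the right by $e$ and using $ae=ea$ together with $waw=k_w w$, forcing $k_u=k_w$. In case (ii), where $u$ and $v$ are both reached from a common vertex $w$ on a cycle $c$ via paths $\lambda_{w,u},\lambda_{w,v}$, I would run the same commutation trick along each path to conclude $k_u=k_w=k_v$; the presence of the cycle is what links the two branches through the shared source $w$. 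Combining these, $k_u$ is invariant under $\sim^1$, hence (taking the transitive closure) under $\sim$, so $k_u$ is constant on $[v]$. Second, I must extend invariance from $[v]$ to its saturated closure $\overline{[v]}$. For this I would use Lemma \ref{caminillos}: every $u\in\overline{[v]}$ satisfies $u=\sum_i\alpha_i\alpha_i^\ast$ with $r(\alpha_i)\in[v]$, and then $uau=\sum_i\alpha_i\alpha_i^\ast a\,\alpha_j\alpha_j^\ast$; pushing $a$ through via centrality and applying Lemma \ref{multipillin}-type orthogonality of the $\alpha_i$ collapses this to $\sum_i\alpha_i(k_v\, r(\alpha_i))\alpha_i^\ast = k_v\sum_i\alpha_i\alpha_i^\ast=k_v u$, where $k_v$ is the common value already established on $[v]$.

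\textbf{Main obstacle.} The delicate point is case (ii) together with vertices in $P_e$ (cycles \emph{with} exits), since there the corner $uL_K(E)u$ is larger and a central degree-zero element need not obviously reduce to a scalar on $u$ by a one-step edge argument alone. The danger is a vertex $u$ on a cycle with an exit where the naive edge-propagation splits into several outgoing edges whose ranges carry a priori different scalars. The resolution I anticipate is that centrality forces compatibility across \emph{all} outgoing edges simultaneously via (CK2), $u=\sum_{s(e)=u}ee^\ast$, so that $uau=\sum_e eae^\ast=\sum_e k_{r(e)}ee^\ast$; for this to equal a scalar multiple of $u$ one needs all $k_{r(e)}$ to coincide, and this is exactly what the equivalence relation $\sim$ (which identifies all these ranges through the common source on the cycle) guarantees. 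I would therefore carry out the edge-by-edge commutation carefully, treating regular and infinite-emitter vertices uniformly by working in a corner cut out by a suitable sum of vertices, and lean on Corollary \ref{redhom} as the fallback to rule out any homogeneous degree-zero remainder that is not a scalar multiple of $u$.
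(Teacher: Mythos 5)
The skeleton of your argument---propagate the scalar along paths using centrality of $a$, then pass from $[v]$ to the hereditary closure and finally to $\overline{[v]}$ via Lemma \ref{caminillos}---matches the paper's proof. But there is a genuine gap at the base case: you never actually prove that $uau\in Ku$ for a vertex $u$ lying on a cycle, i.e.\ for $u\in P_c\cup P_e$ (which includes the extreme cycles). For such $u$ the corner $uL_K(E)u$ is large: in degree zero it contains elements such as $d^n(d^n)^*$ for a cycle $d$ based at $u$, or $ee^*$ for an exit $e$, none of which are scalar multiples of $u$. The paper does not derive the scalar property from the relation $\sim$ at all; it imports it as a black box from \cite[Corollary 7]{CMMSS1} (whose underlying mechanism, partially reproduced in the proof of Proposition \ref{Zgen} here, is the generating system $A\cup B$ of $uL_K(E)u$ together with the shift map $S(x)=d^*xd$, of which $au$ is a fixed point, killing the non-diagonal part), and for line points from the one-dimensionality $vL_K(E)v=Kv$ of \cite[Proposition 1.8]{ARSs}. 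Your two proposed substitutes both fail. The (CK2)-recursion $uau=\sum_{s(e)=u}e\,(a\,r(e))\,e^*$ is circular: the scalars $k_{r(e)}$ you want to compare are instances of the very statement being proved, the recursion has no base case (on a cycle it returns to $u$ itself, and along an infinite tree it need not terminate), and your claim that the relation $\sim$ ``guarantees the $k_{r(e)}$ coincide'' inverts the logic of the paper, where equality of scalars across a class is a \emph{consequence} of the scalar property at cycle vertices, not a substitute for it.

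The Corollary \ref{redhom} fallback does not close the argument either. If $b:=uau-ku\neq 0$, it is homogeneous of degree $0$, and Corollary \ref{redhom} gives $\alpha^*b\beta=k'w\neq 0$; but centrality yields $\alpha^*b\beta=a\alpha^*\beta-k\alpha^*\beta$, and comparing degrees forces $\beta=\alpha$ (up to the terminal vertex), so all you obtain is a vertex $w=r(\alpha)\in T(u)$ with $aw=(k+k')w$, $k'\neq 0$. That is not a contradiction: for a vertex of $P_e$ on a non-extreme cycle with exits, paths leaving $u$ need not return to $u$, so no conflict with $uau$ arises without precisely the corner analysis of \cite{CMMSS1} you were trying to avoid. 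Once the base case is granted, the rest of your plan is sound, though the propagation is cleaner run as in the paper---from the cycle vertex $w$ to $u$ via $ua=\lambda^*\lambda a=\lambda^*a\lambda=\lambda^*(kw)\lambda=ku$ for $\lambda=w\lambda u$---and your closure step needs no orthogonality of the $\alpha_i$, since $a\sum_i\alpha_i\alpha_i^*=\sum_i\alpha_i(a\,r(\alpha_i))\alpha_i^*$ collapses directly.
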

\begin{proof} Suppose first $u= v$.
If $v\in P_c \cup P_{e}$ the result follows by \cite[Corollary 7]{CMMSS1}.
If $v\in P_l$, by the proof of \cite[Proposition 1.8]{ARSs} $vL_K(E)v=Kv$, hence $av=va\in Kv$ .

Now, suppose $u\in [v]$. Since the relation $\sim$ is given as the transitive closure of $\sim^1$, we may assume first that there exists a vertex $w$ in a cycle, and paths $\lambda, \mu$ satisfying $\lambda = w\lambda u$ and $\mu= w\mu v$. By the paragraph before there exists an element $k\in K$ such that $waw= kw$. Then $uau= ua= \lambda^*\lambda a =
\lambda^*a\lambda= \lambda^*(kw)\lambda= k \lambda^*\lambda = k u$ and analogously
we show $va=kv$. Repeating this argument a finite number of steps we reach our claim.

It is easy to see that when $u$ is in the hereditary closure of $[v]$ we also have the result.

Finally, take $u\in \overline{[v]}$. We may assume that $u$ is not a sink (this case has been studied yet).  By Lemma \ref{disyuncion} (i) and Lemma \ref{caminillos} we may write $u=\sum_{i=1}^n\alpha_i\alpha_i^*$, where the $\alpha_i$'s are paths and $r(\alpha_i)$ is in the hereditary closure of $[v]$. Then
$au=a\sum_{i=1}^n\alpha_i\alpha_i^*=\sum_{i=1}^n\alpha_ia\alpha_i^*=
\sum_{i=1}^n\alpha_ikr(\alpha_i)\alpha_i^*= k \sum_{i=1}^n\alpha_i\alpha_i^* = ku$.
\end{proof}

\begin{proposition}\label{tresdiecinueve}
Let   $E$ be a row-finite graph and consider a nonzero homogeneous element $a$ of degree zero in $Z(L_K(E))$. Then: 
\begin{enumerate}[\rm (i)]
\item $av=0$ for every $v\in P_\infty$.
\item $a=\sum_{{[v]\in {X_f}}}k_{[v]}a_{[v]} \quad \text{where}\quad
a_{[v]}=\sum_{u\in \overline{[v]}}u +\sum_{\alpha\in F_E({\overline{[v]}})}\alpha\alpha^*$
\end{enumerate}

\end{proposition}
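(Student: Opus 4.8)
The plan is to first extract, from the hypotheses that $a$ is central and homogeneous of degree zero, two eigenvalue-type identities describing how $a$ acts on each colour $\overline{[v]}$. Since $a$ commutes with every vertex, for $u\in E^{0}$ we have $au=ua$ and hence $au=uau$; combining this with Lemma~\ref{capullete} gives, for each $v\in P$ and each $u\in\overline{[v]}$, the identity $au=uau=k_{[v]}u$, where $k_{[v]}$ is the class constant supplied by Lemma~\ref{capullete} (it is a class invariant, since $\overline{[u]}=\overline{[v]}$ for $u\sim v$). Commuting $a$ past a path and using $\alpha=\alpha\,r(\alpha)$ and $\alpha^{*}=r(\alpha)\,\alpha^{*}$ upgrades this to $a\,\alpha\alpha^{*}=\alpha\,(r(\alpha)\,a\,r(\alpha))\,\alpha^{*}=k_{[v]}\,\alpha\alpha^{*}$ for every $\alpha\in F_E(\overline{[v]})$, since $r(\alpha)\in\overline{[v]}$. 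Throughout I fix a local unit $\epsilon=\sum_{w\in W}w$ with $W$ finite and $a=\epsilon a=a\epsilon$; then $au=0$ whenever $u\in E^{0}\setminus W$, and $a\,\alpha\alpha^{*}=0$ whenever $s(\alpha)\notin W$, because $\epsilon u=0$, resp. $\epsilon\alpha=0$.

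For (i), fix $v\in P_\infty$; since $av=k_{[v]}v$ it suffices to prove $k_{[v]}=0$. If $\lvert\overline{[v]}\rvert=\infty$, pick $u\in\overline{[v]}\setminus W$ (possible as $W$ is finite); then $0=au=k_{[v]}u$ forces $k_{[v]}=0$. The remaining case, $\lvert\overline{[v]}\rvert<\infty$ but $\lvert F_E(\overline{[v]})\rvert=\infty$, is the delicate one. Here I first record the structural fact that \emph{no cycle lying outside $\overline{[v]}$ can reach $\overline{[v]}$}: every vertex of $\overline{[v]}$ connects down to $[v]$ (the saturation adjoins only vertices all of whose descendants already lie in the set), so a cycle $d$ reaching $\overline{[v]}$ reaches some $p\in[v]$, and then clause~(ii) of Definition~\ref{classextendido} (take $w\in d^{0}$, with paths $w\to w'$ around $d$ and $w\to p$) forces $d^{0}\subseteq[v]\subseteq\overline{[v]}$. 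Consequently every path in $F_E(\overline{[v]})$ is simple. Now the source set of $F_E(\overline{[v]})$ is necessarily infinite: by row-finiteness a finite set of vertices emits only finitely many paths of each bounded length, while a simple first-entrance path of length $n$ exhibits $n$ distinct sources among its suffixes, so finitely many sources would bound all lengths and hence make $F_E(\overline{[v]})$ finite. Therefore some $\alpha\in F_E(\overline{[v]})$ has $s(\alpha)\notin W$, giving $0=a\,\alpha\alpha^{*}=k_{[v]}\,\alpha\alpha^{*}$ and again $k_{[v]}=0$.

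For (ii), the plan is to show that $b:=\sum_{[v]\in X_f}k_{[v]}a_{[v]}$ equals $a$. Each $a_{[v]}$ with $[v]\in X_f$ is a genuine finite sum (by the $P_f$ conditions), is idempotent and self-adjoint, and a direct check against vertices and real edges (using Lemma~\ref{multipillin} and that $\overline{[v]}$ is hereditary) shows $a_{[v]}$ is central; moreover $a_{[v']}a_{[v]}=0$ for $[v']\neq[v]$ by Lemma~\ref{disyuncion}. Only finitely many $k_{[v]}$ are nonzero: if $k_{[v]}\neq0$ then every $u\in\overline{[v]}$ satisfies $au=k_{[v]}u\neq0$, so $u\in W$, and distinct classes have disjoint closures inside the finite set $W$. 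Hence $b$ is a well-defined central degree-zero element, and $c:=a-b$ is central and homogeneous of degree zero. From $a\,a_{[v]}=k_{[v]}a_{[v]}$ and the orthogonality above, $c\,a_{[v]}=0$; since $a_{[v]}u=u$ for $u\in\overline{[v]}$ this yields $cu=0$ for every $u\in P_f$, while $cu=0$ for $u\in P_\infty$ by part~(i) applied to $c$. Thus $c$ annihilates every vertex of $P$. Suppose $c\neq0$: by Corollary~\ref{redhom} together with centrality and the degree-zero condition, there is a vertex $w$ with $cw=k'w$, $k'\in K^{\times}$. For any path $\gamma$ with $s(\gamma)=w$ one obtains $c\,r(\gamma)=k'\,r(\gamma)$, so $cz=k'z\neq0$ for every $z\in T(w)$; as $c$ has finite support this forces $T(w)$ to be finite. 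A finite hereditary tree has a terminal strongly connected component $S$, which is itself hereditary, and a short case analysis (sink, single vertex with a loop or with several loops, cycle without exit, or cycle with exit in a nontrivial component, which is then extreme) shows $S\subseteq P=P_l\cup P_c\cup P_{ec}$. Picking $z\in S$ gives both $cz=0$ and $cz=k'z\neq0$, a contradiction; hence $c=0$ and $a=b$.

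The main obstacle is the second case of part~(i): controlling $k_{[v]}$ when $\overline{[v]}$ is finite yet infinitely many paths enter it. Everything hinges on the observation that any cycle meeting $\overline{[v]}$ is absorbed into $[v]$ by clause~(ii) of the equivalence, so that first-entrance paths are simple; without this one could manufacture infinitely many first-entrance paths from a single external cycle, all of whose sources lie in $W$, and the support argument would collapse. The terminal-component analysis in~(ii) is the other point demanding care, but it is a finite, purely graph-theoretic argument once $T(w)$ is known to be finite.
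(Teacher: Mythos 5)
Your argument is correct; part (i) is essentially the paper's own proof, while part (ii) takes a genuinely different route. In (i) both arguments turn on the same two facts: a central element is supported on finitely many vertices, and any cycle reaching $\overline{[v]}$ is absorbed into $[v]$ via clause (ii) of Definitions \ref{classextendido}, so paths in $F_E(\overline{[v]})$ cannot revisit vertices. The paper assumes $av\neq 0$ and pigeonholes infinitely many first-entrance paths through a finite vertex set to force a repeated vertex; you instead prove simplicity of first-entrance paths outright, deduce that their sources form an infinite set, and kill the class constant $k_{[v]}$ of Lemma \ref{capullete} with a source outside the local unit --- the same ideas, rearranged (and your version cleanly avoids the contradiction setup). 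In (ii) the paper is constructive: it expands $au$ by repeated applications of (CK2), pushing down the graph until every branch terminates, by finite support, at a vertex related to $P\cup P_e$, and then matches coefficients along each class using centrality; you instead subtract the candidate $b=\sum k_{[v]}a_{[v]}$ and show $c=a-b=0$ via Corollary \ref{redhom}: degree zero plus centrality force $\alpha=\beta$ and yield an eigenvertex $w$ with $cw=k'w$, $k'\in K^{\times}$, the eigenvalue propagates along $T(w)$, finite support makes $T(w)$ finite, and a terminal strongly connected component of $T(w)$ lies inside $P$, contradicting $cP=0$. (Your coverage of $P_f$ by the classes in $X_f$ is legitimate because $\overline{[u]}$ and $F_E(\overline{[u]})$ depend only on the class of $u$, so $u\in P_f$ if and only if $[u]\in X_f$.) Two remarks on what each approach buys. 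First, your route presupposes that each $a_{[v]}$ is central; this is true but nontrivial --- it is exactly the verification the paper performs later in Theorem \ref{gradocero}, a verification independent of Proposition \ref{tresdiecinueve}, so there is no circularity, but your ``direct check'' is the heaviest suppressed step (note that checking vertices and real edges suffices only because $a_{[v]}$ is self-adjoint, so commutation with ghost edges follows by applying the involution). Second, your terminal-component analysis supplies a proof of precisely the graph-theoretic assertion the paper invokes without proof in Theorem \ref{idealDenso}(ii), namely that in a graph with finitely many vertices every vertex connects to $P_l\cup P_c\cup P_{ec}$. In exchange, the paper's expansion argument is self-contained at this point of the development and directly produces the standard form later used for spanning, whereas yours avoids the (CK2) bookkeeping at the cost of front-loading the centrality of the elements $a_{[v]}$.
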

\begin{proof}
(i). 
Suppose first $\vert \overline{[v]}\vert=\infty$. If $av\neq 0$, by Lemma \ref{capullete} we have $au\neq 0$ for every $u\in \overline{[v]}$, which is an infinite set; this is not possible, so $av=0$ and we have finished. Now, assume $\vert \overline{[v]}\vert<\infty$. Since $v\in P_\infty$, necessarily 
$\vert F_E({\overline{[v]}})\vert= \infty$, then we have an infinite collection of paths $\{\alpha_n\}_{n\in \mathbb{N}}\subseteq F_E({\overline{[v]}})$ with $\alpha_m\ne \alpha_n$ for $m\ne n$. Next we prove that $as(\alpha_n)\ne 0$ for each $n$. First, note that $r(\alpha_n)\in \overline{[v]}$; since $av\neq 0$ then $ar(\alpha_n)\neq 0$ by Lemma  \ref{capullete}, therefore $0\neq ar(\alpha_n)=a\alpha_n^*\alpha_n=\alpha_n^*as(\alpha_n)\alpha_n$ which implies our claim.

Since there is only a finite number of vertices $u\in E^0$ such that $au\neq 0$, the set
 $\{s(\alpha_n)\}_{n\in \mathbb{N}}$ must be finite. Moreover, the set $\overline{[v]}$, which contains all the ranges of the paths $\alpha_n$ is finite. Since $\vert \{\alpha_n^0\}_{n\in \mathbb{N}}\vert< \infty$ (because every vertex in $\alpha_n^0$ does not annihilate $a$ by Lemma \ref{capullete}), $v\notin P_f$ and no vertex in $\alpha_n^0\setminus r(\alpha_n)$ is an infinite emitter, we may conclude that there is a path $\alpha_m=\gamma_1 \dots \gamma_r \dots \gamma_t$, with $\gamma_i\in {\rm Path}(E)$ such that $l(\gamma_r)\geq 1$ and $s(\gamma_r)=r(\gamma_r)$, hence $\alpha_m$ contains a  cycle based at $s(\gamma_r)$, implying $s(\gamma_r)\in {[v]}$. This is a contradiction with the fact $\alpha_m\in F_E(\overline{[v]})$ and (i) has been proved.
 
 (ii). Write $a=\sum au$, with $u\in E^0$ and $au\neq 0$. If $u\sim v$, with $v\in P\cup P_e$,
 then $au=k_vu$ by Lemma \ref{capullete} and $k_v\in K^\times$. If $u\not\sim v$ for any  $v\in P\cup P_e$ then, as it is not a sink, by (CK2), we may write $u=\sum_{s(e)=u} ee^*$.
Then $au = a\sum_{s(e)=u} ee^* = \sum_{s(e)=u} eae^*$. Take $e$ in this summand. If $r(e)\in P\cup P_e$ then
$ar(e)=k_{r(e)}r(e)$ where $k_{r(e)} \in K$ and we get a summand as in the statement. Otherwise we apply (CK2) to $r(e)$ and write
$r(e)=\sum_{s(f)=r(e)} ff^*$; then $aee^*= ae\sum_{s(f)=r(e)} ff^*e^*$. Every summand $aeff^*e^*$ with $r(f)\in P\cup P_e$ is $k_{r(ef)}eff^*e^*$ by Lemma \ref{capullete}, which is a summand as in the statement. For every nonzero summand not being in this case we apply again (CK2). This process stops because otherwise we would have an infinite path $e_1e_2\dots$ with $e_i\in E^0$ such that $s(e_i)\neq s(e_j)$ for every $i\neq j$ and $ae_i\neq 0$ for every $i$, which is not possible as the number of vertices not annihilating $a$ has to be finite. Note that the path $e_1e_2\dots e_n$ we arrive at is, by construction, an element in $F_E({\overline{[v]}})$. We remark that $v\in P_f$ by (i).

Take $v\in E^0$ such that $[v]\in X_f$ and $av\neq 0$; by Lemma \ref{capullete} we have $av=k_vv$ for some $k_v\in K$. Note that $k_v\neq 0$. For any $u\in \overline{[v]}$, Lemma \ref{capullete} shows that $au=k_vu\neq 0$. If $\beta\in F_E(\overline{[v]})$, then by Lemma \ref{multipillin} $\beta^*a=k_{r(\beta)} \beta^*\beta\beta^*$. Since $\beta^*a=a\beta^*=ar(\beta)\beta^*=k_v\beta^*$, we get $k_{r(\beta)}=k_v\neq 0$. This shows that $a$ can be written as a linear combination of elements of the form 

$$(\dag)\quad  \sum_{u\in \overline{[v]}}u +\sum_{\alpha\in F_E({\overline{[v]}})}\alpha\alpha^*,$$
where $[v]\in X_f$. 
\end{proof}

\begin{theorem}\label{gradocero} Let $E$ be a row-finite graph. 
For every class $[v]\in X_f$, denote by $a_{[v]}=\sum_{u\in \overline{[v]}}u +\sum_{\alpha \in F_E(\overline{[v]})}\alpha\alpha^*$. Then
$$\mathcal{B}_0=\left\{
a_{[v]} \ \vert \ [v]\in X_f\right\}$$
is a basis of the zero component of the center of $L_K(E)$.
\end{theorem}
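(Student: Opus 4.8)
The plan is to show that $\mathcal{B}_0$ is a \emph{spanning set} for $Z_0(L_K(E))$ and that it is \emph{linearly independent}, which together establish the basis claim. The spanning assertion is essentially already done: by Lemma \ref{centrograd} the center of the $\mathbb{Z}$-graded algebra $L_K(E)$ is graded, so its zero component $Z_0 = Z_0(L_K(E))$ consists of the homogeneous central elements of degree zero. Applying Proposition \ref{tresdiecinueve}(ii) to an arbitrary nonzero $a\in Z_0$ yields precisely a $K$-linear combination $a=\sum_{[v]\in X_f}k_{[v]}a_{[v]}$ of the elements $a_{[v]}$. Hence every element of $Z_0$ lies in the $K$-span of $\mathcal{B}_0$, so it suffices to verify two things: first, that each $a_{[v]}$ genuinely belongs to $Z_0$ (so that $\mathcal{B}_0\subseteq Z_0$), and second, that the family $\{a_{[v]}\}_{[v]\in X_f}$ is linearly independent.

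For membership, I would fix $[v]\in X_f$ and check that $a_{[v]}=\sum_{u\in\overline{[v]}}u+\sum_{\alpha\in F_E(\overline{[v]})}\alpha\alpha^*$ is central. Note first that $a_{[v]}$ is homogeneous of degree zero. Because $\overline{[v]}$ is a hereditary and saturated set (it is the saturated closure of the hereditary set $[v]$ by Lemma \ref{disyuncion}(i)), one recognizes $a_{[v]}$ as an idempotent: indeed Lemma \ref{caminillos} shows each $w\in\overline{[v]}$ decomposes as a sum of terms $\alpha\alpha^*$ with $r(\alpha)\in[v]$, and the combinatorics of $F_E(\overline{[v]})$ together with Lemma \ref{multipillin} give $a_{[v]}^2=a_{[v]}$ and $a_{[v]}a_{[w]}=0$ for $[v]\neq[w]$. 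To see centrality it is enough to check that $a_{[v]}$ commutes with every vertex $u\in E^0$ and every edge $e,e^*\in E^1\cup(E^1)^*$, since these generate $L_K(E)$. Commuting with vertices is immediate from the orthogonality of distinct vertices and the hereditary/saturated nature of $\overline{[v]}$; commuting with edges $e$ uses that $s(e)\in\overline{[v]}$ forces $r(e)\in\overline{[v]}$ (heredity) and that the summand $\sum\alpha\alpha^*$ absorbs the edges that cross the boundary of $\overline{[v]}$ — this is exactly the defining role of $F_E(\overline{[v]})$, and is the reason the construction in Lemma \ref{idhersetLPA} uses these sets. The finiteness conditions defining $X_f$ (namely $|\overline{[v]}|<\infty$ and $|F_E(\overline{[v]})|<\infty$) guarantee $a_{[v]}$ is a genuine finite sum, hence a bona fide element of the algebra.

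For linear independence, suppose $\sum_{[v]\in X_f}k_{[v]}a_{[v]}=0$ with finitely many nonzero $k_{[v]}$. Fix a class $[w]$ with $k_{[w]}\neq 0$ and pick a vertex $u_0\in[w]\subseteq\overline{[w]}$. Multiplying the relation on both sides by $u_0$ and using the orthogonality $a_{[v]}\,u_0 = u_0$ if $u_0\in\overline{[v]}$ and $=0$ otherwise, together with Lemma \ref{disyuncion}(ii) (which gives $\overline{[v]}\cap\overline{[w]}=\emptyset$ for $[v]\neq[w]$, so $u_0\notin\overline{[v]}$ when $[v]\neq[w]$), collapses the sum to $k_{[w]}u_0=0$. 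Since $u_0\neq 0$ in $L_K(E)$ this forces $k_{[w]}=0$, a contradiction. Thus all coefficients vanish and $\mathcal{B}_0$ is linearly independent.

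The main obstacle, and the step deserving the most care, is the verification that each $a_{[v]}$ is actually central rather than merely a degree-zero idempotent. The delicate point is commutation with edges that leave $\overline{[v]}$: one must confirm that the correction terms $\alpha\alpha^*$ for $\alpha\in F_E(\overline{[v]})$ precisely account for paths entering the hereditary-saturated set from outside, so that $e\,a_{[v]} = a_{[v]}\,e$ holds for every edge $e$ with $r(e)\in\overline{[v]}$ but $s(e)\notin\overline{[v]}$. Here the saturation of $\overline{[v]}$ (needed so that regular vertices feeding only into $\overline{[v]}$ are already inside, preventing leftover boundary terms) and the row-finiteness hypothesis (ensuring the relevant (CK2) sums are finite) are both essential; without saturation the element $a_{[v]}$ need not commute with such edges, and the standard-form description would fail. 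Once this commutation is established on generators, centrality follows and the theorem is complete.
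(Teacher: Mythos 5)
Your skeleton coincides with the paper's: spanning comes from Proposition \ref{tresdiecinueve}(ii) after Lemma \ref{centrograd}, independence from disjointness of the closures, and membership $\mathcal{B}_0\subseteq Z_0$ is checked on the generators $E^0\cup E^1\cup(E^1)^*$. But there is a genuine gap at exactly the point you yourself flag as deserving the most care: the commutation with edges is asserted (``the summand $\sum\alpha\alpha^*$ absorbs the edges that cross the boundary''), not proved, and your diagnosis of where the difficulty lies is off. The case you single out, $s(e)\notin\overline{[v]}$ and $r(e)\in\overline{[v]}$, is the \emph{easy} one: then $e\in F_E(\overline{[v]})$, and Lemma \ref{multipillin} gives directly $ea_{[v]}=e=a_{[v]}e$. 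The delicate case, which your proposal never addresses, is $r(e)\notin\overline{[v]}$ with $e$ a proper prefix of paths in $F_E(\overline{[v]})$: there one must check that $ea_{[v]}=\sum e\alpha\alpha^*$ (sum over $\alpha\in F_E(\overline{[v]})$ with $s(\alpha)=r(e)$) equals $a_{[v]}e=\sum e\gamma\gamma^*$ (sum over those $\gamma$ with $e\gamma\in F_E(\overline{[v]})$), i.e.\ that $\gamma\mapsto e\gamma$ matches the two index sets, and, in the paper's version of the computation, that the cross term $e\sum e\alpha\alpha^*e^*$ vanishes. That last step is where the structure of the relation $\sim$ of Definitions \ref{classextendido} enters: $e^2\alpha\neq 0$ forces $e$ to be a loop, and then clause (ii) of $\sim^1$ puts $s(e)$ into $[v]$, a contradiction. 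Saturation, which you claim is ``essential'' and ``without [which] the element $a_{[v]}$ need not commute,'' plays no role here; what is essential is that $[v]$ is a full $\sim$-class (so that cycles feeding into $[v]$ already lie inside it) together with the finiteness conditions defining $X_f$ and row-finiteness for the (CK2) sums.

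There is a second, smaller gap in your independence argument. The claim ``$a_{[v]}u_0=0$ otherwise'' is not automatic: for $u_0\notin\overline{[v]}$ one gets $u_0a_{[v]}u_0=\sum_{s(\alpha)=u_0}\alpha\alpha^*$, which vanishes only if no $\alpha\in F_E(\overline{[v]})$ starts at $u_0$, and this needs an argument using $u_0\in[w]\subseteq P$. For $u_0\in P_c\cup P_{ec}$ every path from $u_0$ returns to its cycle, so heredity of $\overline{[v]}$ together with Lemma \ref{disyuncion}(ii) would force $u_0\in\overline{[v]}$, a contradiction; for $u_0\in P_l$, Lemma \ref{caminillos} yields a path from $u_0$ to a vertex of $[v]$, and clause (i) of $\sim^1$ (no bifurcations in $T(u_0)$) then gives $u_0\in[v]$, again a contradiction. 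Without this verification the collapse of the relation to $k_{[w]}u_0=0$ is unjustified, and the same caveat applies to your unproved assertions that $a_{[v]}^2=a_{[v]}$ and $a_{[v]}a_{[w]}=0$. In short: the strategy is the paper's, the spanning step is complete, but the heart of the theorem --- the edge-commutation computation occupying most of the paper's proof --- is missing, and the mechanism you offer in its place (saturation) is not the one that makes it work.
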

\begin{proof}
By Proposition \ref{tresdiecinueve} every element in the zero component of the center is a linear combination of the elements of $\mathcal{B}_0$. 
Lemmas \ref{disyuncion} and  \ref{multipillin} imply that $\mathcal{B}_0$ is a set of linearly independent elements. 

In what follows we show that every $a_{[v]}$ in $\mathcal{B}_0$ is in the center of $L_K(E)$.
To start, consider a vertex $w$ in $E^0$. It is not difficult to see $wa_{[v]}= a_{[v]}w$ since $a_{[v]}$ is in
$\oplus_{i=1}^n u_iL_K(E)u_i$ for a certain finite family of vertices $\{u_i\}_{i=1}^n$.

Consider and edge $e$ in $E$ and denote by $A:=\overline{[v]}\cup F_E(\overline{[v]})$. We claim:
$$(\dag) \quad r(e)\in A \quad \text{if and only if}\quad s(e)\in A\quad  \text{or}\quad e\in A.$$

Assume $r(e) \in A$. If $s(e) \notin A$ then $s(e)\notin \overline{[v]}$ and since $r(e) \in\overline{[v]}$  we conclude that $e \in F_E(\overline{[v]})\subseteq A$. Reciprocally, if $s(e) \in A$ then $r(e)\in A$ because $\overline{[v]}$ is hereditary (Lemma \ref{disyuncion} (i)).
Finally, if $e\in A$ then $e \in F_E(\overline{[v]})$ which implies $r(e) \in A$.

Now we see that $a_{[v]}$ commutes with $e$.
Suppose first that $r(e)\in A$. Then $ea_{[v]}=e$ by Lemma \ref{multipillin}. On the other hand, if $s(e)\in A$, then $a_{[v]}e=e+0=e$ by Lemma \ref{multipillin}; if $s(e)\notin A$, then $a_{[v]}e=0+e=e$ by Lemma \ref{multipillin}. Now, suppose $r(e)\notin A$. Then, by $(\dag)$ we have $s(e), e \notin A$. For each $\alpha$ we have $e\alpha=0$ or $e\alpha\neq 0$ and, in this last case, $e\alpha\in A$ since $s(e)\notin A$, by hypothesis, and $r(e)=s(\alpha)\notin A$.

Applying the results explained above, 
$$ea_{[v]}= e\sum_{\alpha\in F_E(\overline{[v]})}\alpha\alpha^*=
e\sum_{\overset{
\alpha\in F_E(\overline{[v]})}{\alpha^*e=0}}
\alpha\alpha^*+
e \sum_{\alpha\in F_E(\overline{[v]})}e\alpha\alpha^*e^*.$$
We claim that the second summand must be zero. Indeed, suppose $e^2\alpha\neq 0$; then, $s(e)\sim w$ for a vertex $w\in [v]$; this implies $s(e)\in [v]$, a contradiction since we assume $s(e)\notin A$.

Therefore

\begin{equation}\label{identi}  ea_{[v]}=
\sum_{\overset{
\alpha\in F_E(\overline{[v]})}{\alpha^*e=0}}
e\alpha\alpha^*
\end{equation}

In what follows we compute $a_{[v]}e$.

$$\begin{aligned}
a_{[v]}e & =  \left(\sum_{\overset{
\alpha\in F_E(\overline{[v]})}{\alpha^*e=0}}
\alpha\alpha^*\right)e+\left(
\sum_{\alpha\in F_E(\overline{[v]})}e\alpha\alpha^*e^*\right)e
= 
\sum_{\alpha\in F_E(\overline{[v]})}e\alpha\alpha^*e^*e\\
& = \sum_{\alpha\in F_E(\overline{[v]})}e\alpha\alpha^*
=
\sum_{\overset
{\alpha\in F_E(\overline{[v]})}{\alpha^*e = 0}}
e\alpha\alpha^*
+
\sum_
{\overset{\alpha\in F_E(\overline{[v]})}{\alpha^*e\neq 0}}
e\alpha\alpha^*
.\end{aligned}
$$
We claim that the second summand is zero. The reason is again that for $\alpha$ in the second summand, $e\alpha$ must be zero because $\alpha$ starts by $e$ and $\alpha \in F_E(\overline{[v]})$. Hence
$$a_{[v]}e = \sum_{\overset
{\alpha\in F_E(\overline{[v]})}{\alpha^*e = 0}}
e\alpha\alpha^*= ea_{[v]}$$
by (\ref{identi}).
\end{proof}

\medskip

\medskip

\begin{notation}\label{notati} {\rm 
Let $E$ be a graph. For a cycle $c=e_1\dots e_n$ in $E$ and $u_i=s(e_i)$ we will write $c_{u_i}$ to denote the cycle $e_i \dots e_{i-1}$. Let $\mathcal S$ be the set of all those cycles without exits $c$ such that there is a finite number of paths ending at $c$  and not containing $c$.

Note that if $c$ is a cycle in $\mathcal S$ and $u, v\in c^0$ with $u\neq v$, then $c_u$ and $c_v$ will be different elements in $\mathcal S$. 
}
\end{notation}

\begin{proposition}\label{Zgen} Let $E$ be an arbitrary graph and $K$ any field. Consider
a homogeneous element $a$ in $Z(L_{K}(E))$ with ${\rm deg}(a) > 0$.
  Then $au=0$ for all $u \in P_l\cup {P}_e$. If $u\in P_c$ then $au=k_uc_{u}^{r}$, where $k_u\in K$,  $c_u$ is a cycle without exits and $r\in \mathbb N$. Moreover, if $u\in P_c^+$, then $k_u=0$.
\end{proposition}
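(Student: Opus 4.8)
The plan is to exploit the grading together with Corollary \ref{redhom} and the structural Lemma \ref{capullete}, analyzing each type of vertex separately. First I would record the key consequence of $a$ being homogeneous of positive degree $d$: for any vertex $u$, the element $au = uau$ lies in $uL_K(E)_d u$, where $L_K(E)_d$ is the degree-$d$ component. The strategy throughout is that, since $a$ is central, for every path $\mu$ with $s(\mu)=u$ we have $a\mu = \mu a$, and likewise $a\mu^\ast = \mu^\ast a$; these commutation relations will force strong constraints on where $au$ can be nonzero.

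For $u \in P_l$, the tree $T(u)$ contains neither bifurcations nor cycles, so by the proof of \cite[Proposition 1.8]{ARSs} one has $uL_K(E)u = Ku$, which is concentrated in degree zero. Since $au \in uL_K(E)_d u$ with $d>0$, this forces $au = 0$. For $u \in P_e$ (a vertex in a cycle \emph{with} an exit), I would invoke the homogeneous reduction: if $au \neq 0$ then, applying Corollary \ref{redhom} to the homogeneous element $au$, there exist paths $\alpha,\beta$ and $k\in K^\times$, $w\in E^0$ with $\alpha^\ast (au)\beta = kw$; pushing this through the centrality of $a$ and the existence of an exit at the cycle, one derives a contradiction (the exit provides a path along which the degree cannot be absorbed back, so no nonzero homogeneous central element of positive degree can survive at $u$). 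Thus $au=0$ for all $u\in P_l \cup P_e$.

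For $u \in P_c$, let $c_u = e_1\cdots e_n$ be the (exit-free) cycle based at $u$. The essential point is that $uL_K(E)u$ is governed by $c_u$: by \cite[Corollary 7]{CMMSS1} the corner at a vertex in a cycle without exits is a Laurent-polynomial-type object, so $uL_K(E)u \cong K[c_u, c_u^\ast]$ as a graded object, with homogeneous components spanned by powers $c_u^r$ (degree $rn>0$) and $(c_u^\ast)^r$. Since $au = uau$ is homogeneous of positive degree $d$, it must be a scalar multiple of the appropriate positive power, i.e. $au = k_u c_u^{r}$ with $d = rn$, giving exactly the claimed form. Finally, for the subcase $u \in P_c^+$, I would use that there are infinitely many paths ending at $c^0$ not traversing all of $c$: centrality propagates the coefficient $k_u$ along all these paths (as in the computation $\lambda^\ast a \lambda$ used in Lemma \ref{capullete}), but matching a \emph{positive-degree} central element against infinitely many such incoming paths produces infinitely many vertices not annihilating $a$, which is impossible; hence $k_u = 0$.

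The main obstacle I anticipate is the $P_e$ case and the $P_c^+$ refinement, rather than the routine $P_l$ and $P_c$ computations. For $P_e$, the difficulty is turning "the cycle has an exit" into a clean contradiction at the level of the homogeneous central element; the cleanest route is probably to assume $au\neq 0$, reduce via Corollary \ref{redhom} to a scalar vertex $kw$, and then use the exit to build a path that shifts the degree irreversibly, contradicting that $a$ is central of fixed positive degree. For $P_c^+$, the subtlety is making precise the finiteness argument: I must argue that a nonzero $k_u$ would force $ar(\alpha)\neq 0$ for the infinitely many paths $\alpha$ ending at $c^0$ (via $a\alpha^\ast = \alpha^\ast a$), contradicting that only finitely many vertices fail to annihilate a given central element — the same finiteness principle underlying the proof of Proposition \ref{tresdiecinueve}(i).
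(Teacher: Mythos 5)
Your $P_l$ and $P_c$ computations are correct and essentially the paper's (the paper quotes \cite[Proposition 4.7 and Remark 4.8]{AMMS2} for $uL_K(E)u=Ku$ when $u\in P_l$, and \cite[Proposition 2.3]{ABS} for $uL_K(E)u\cong K[x,x^{-1}]$ when $u\in P_c$), but the $P_e$ case --- which you rightly single out as the crux --- does not close along the route you propose, and this is a genuine gap. Follow your own reduction through: applying Corollary \ref{redhom} to $au$ gives $0\neq\alpha^*(au)\beta=kw$ with $s(\alpha)=s(\beta)=u$, and centrality turns this into $kw=a\,\alpha^*\beta$; since ${\rm deg}(a)>0$, $\alpha^*\beta$ cannot be a real path, so $\alpha=\beta\delta$ and $kw=a\delta^*$, which forces $\delta$ to be a closed path of length ${\rm deg}(a)$ based at $w=r(\alpha)\in T(u)$ with $aw=k\delta$. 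All the information now sits at the \emph{downstream} vertex $w$, not at $u$. If $\delta$ has an exit you can indeed force $k=0$ (rotate $\delta$ to the exit vertex $z$, get $az=k\delta_2\delta_1$, and commute $a$ with the exit edge $e$: one finds $ae^*=0$, hence $ar(e)=0$, hence $ea=0$, while $ae=k\delta_2\delta_1e\neq0$ unless $k=0$); but if $\delta$ is a power of a cycle \emph{without} exits there is no contradiction at all --- you have merely shown that the support of $a$ flows into $P_c$, which is exactly the behaviour the proposition permits, and nothing has been said about $au$ itself. For a cycle $d$ with an exit feeding a no-exit cycle $c$, your reduction from a hypothetical $au\neq 0$, $u\in d^0$, lands consistently on $c$. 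The paper instead works entirely inside the corner $uL_K(E)u$: it exhibits the spanning set $A\cup B$ (with $A=\{d^n(d^m)^*\}$ and $B$ the monomials passing through exits of $d$), observes that the shift $S(x)=d^*xd$ fixes $au$ (by centrality) and annihilates each $b\in B$ after finitely many iterations, concludes $au\in{\rm span}(A)$, collapses $au=S^l(au)$ to a scalar multiple of $d^{{\rm deg}(a)}$, and gets the contradiction from the failure of $d^{{\rm deg}(a)}$ to commute with $d^*$ --- the only place the exit is used, via $dd^*\neq u$. This collapsing mechanism is the missing idea; ``the exit shifts the degree irreversibly'' is not a substitute for it.

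Your $P_c^+$ argument has a second gap: infinitely many paths ending at $c^0$ do \emph{not} in general yield infinitely many vertices failing to annihilate $a$. If the infinitude comes from a cycle $d$ lying above $u$, the paths $d^n\alpha$ pass through only finitely many vertices and your counting contradiction evaporates. The paper's proof makes exactly this case split: if some cycle $d$ satisfies $s(d)\geq u$, then $d$ connects to the no-exit cycle through $u$, hence $d$ has an exit and $s(d)\in P_e$, so the already-proved vanishing on $P_e$ gives $a\,s(d)=0$, and pulling back along a path $\alpha=s(d)\alpha u$ yields $k_uc^r=\alpha^*a\,s(d)\alpha=0$; only in the complementary case (no cycle above $u$) does one conclude $\vert\{s(\gamma_n)\}\vert=\infty$ --- and even that step uses row-finiteness, since an infinite emitter above $u$ produces infinitely many paths with a single source --- after which your finite-support principle applies. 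So this case is fixable with ingredients you already have (your part one plus the finiteness principle, as in Proposition \ref{tresdiecinueve}), but as written the single counting argument would fail.
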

\begin{proof} 
If $u\in P_l$, then $au=uau\in uL_K(E)u =Ku$, by \cite[Proposition 4.7 and Remark 4.8]{AMMS2}, hence there exists $\lambda\in K$ such that $au=\lambda u$. Then $\lambda$ must be zero as the degree of $\lambda u$ is zero and ${\rm deg}(a) > 0$.
\newline

Now, take $u\in {P}_e$ and let $d$ be a cycle such that $u=s(d)$. 

We will use partially a reasoning that appears in \cite[Theorem 6]{CMMSS1}; we include it here for the sake of completeness.

 A generator system for $uL_K(E)u$ is $A\cup B$, for 

$$A= \left\{
d^n(d^m)^*\ \vert \ n, m \geq 0\right\}$$
and
$$
B= \left\{
d^n\alpha \beta^*(d^m)^*\ \vert \ n, m \geq 0,\ \alpha, \beta \in {\rm{Path}}(E),\ s(\alpha)=u=s(\beta), d \not\leq \alpha, \beta,\ \alpha^1\cup\beta^1\not\subseteq d^1 \right\}.$$
For $n=0$ we understand $d^n=u$. Note that given $n,m \geq 0$ and $d^n\alpha \beta^*(d^m)^*\in B$, there exists a  suitable $r\in \mathbb{N}$ such that $(d^r)^*d^n\alpha \beta^*(d^m)^*d^r=0$. This gives us that if we define the map $S:L_K(E) \to L_K(E)$  by
$S(x) = d^\ast x d$,  for every $b\in B$ there is an $n\in \mathbb{N}$ satisfying $S^n(b)=0$. Note that $au$ is a fixed point for $S$. A consequence of this reasoning is that $au\in span(A)$. Write $au=\sum_nk_{n}d^n(d^m)^*$, for $m=n-deg(a)$, where $k_n\in K$. Then, for some $l\in \mathbb{N}$ we have $au=S^l(au)=\sum_nk_nd^{deg(a)}$. Since $au$ commutes with every element in $uL_K(E)u$, the same should happen to $d^{deg(a)}$, but this is not true as it does not commute with $d^*$, giving $au=0$.
\newline

Next we show the second part of the statement. For $u\in P_c$, $au=uau\in uL_K(E)u\cong K[x,x^{-1}]$ (by \cite[Proposition 2.3]{ABS}). Since ${\rm deg}(a) > 0$, $au=k_uc_u^{r}$ for some
$r\in \mathbb{N}\setminus$\{0\}, $c_u$ a cycle without exits and $k_u\in K$.
\newline

To finish, consider $u\in P_c^+$ and write $au=k_u c^r$, for $c$ a cycle without exits and $k_u\in K$. Then, two cases can happen. First, assume that there exists a cycle $d$ such that $v:=s(d)\geq u$ and take a path $\alpha$ satisfying $\alpha= v\alpha u$. Then, as we have proved before, $av=0$ and so $k_uc^r= au= a\alpha^*\alpha = \alpha^*a\alpha=
\alpha^*av\alpha=0$; this implies $k_u=0$ as required. In the second case  there exists infinitely many paths $\gamma_n$, for $n\in \mathbb{N}$ ending at $v$. Since $E$ is row-finite (and we are assuming that we are not in the first case, so there are no cycles involved), $\vert \{s(\gamma_n)\}\vert = \infty$. Then there exists $m\in \mathbb{N}$ such that $as(\gamma_m)=0$ and so $av=a\gamma_m^\ast\gamma_m= \gamma_m^\ast as(\gamma_m)\gamma_m=0$ as desired.
\end{proof}
\medskip

\begin{lemma}\label{elsalvador} Let $E$ be a row-finite  graph and $K$ any field. For any
homogeneous element $a$ in $Z(L_{K}(E))$ with ${\rm deg}(a) > 0$ we have
$a=\sum a\alpha\alpha^*$, where $\alpha \in F_E(c^0)$ for some cycle without exits $c$ and $r(\alpha)\in P_c^-$.
\end{lemma}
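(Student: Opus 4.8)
The plan is to run the (CK2)-expansion of a right local unit for $a$, in the spirit of the proof of Proposition~\ref{tresdiecinueve}, letting Proposition~\ref{Zgen} govern which vertices survive. Since $a$ is a finite $K$-linear combination of monomials $\gamma\delta^*$, local units give a finite set $V\subseteq E^0$ with $a=a\sum_{v\in V}v=\sum_{v\in V}av$; discarding the zero summands we may assume $av\neq0$ for every $v\in V$. By Proposition~\ref{Zgen}, $au=0$ whenever $u\in P_l\cup P_e$ and whenever $u\in P_c^+$, so each $v\in V$ lies either in $P_c^-$ or is what I will call a \emph{transient} vertex, meaning it belongs to none of $P_l,P_c,P_e$. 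In particular a transient vertex lies on no cycle, and, not being a sink (every sink is a line point), it is regular because $E$ is row-finite.

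For a transient vertex $v$ I would apply (CK2), a finite sum by row-finiteness: $v=\sum_{e\in s^{-1}(v)}ee^*$. A short computation using the centrality of $a$ and the idempotency of $ee^*$ gives $aee^*=e\bigl(ar(e)\bigr)e^*$, whence $av=\sum_{e\in s^{-1}(v)}e\,(ar(e))\,e^*$. Summands with $ar(e)=0$ are dropped; if $ar(e)\neq0$ then, again by Proposition~\ref{Zgen}, $r(e)$ is either in $P_c^-$ — in which case $e\in F_E(c^0)$ for the exit-free cycle $c$ through $r(e)$ and $a\,e\,e^*$ already has the desired shape — or transient, in which case I iterate on $r(e)$. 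Carrying this out recursively produces terms $a\,\alpha\alpha^*$ indexed by paths $\alpha$ whose source and proper intermediate vertices are transient and whose range is the first vertex of $\alpha$ lying in $P_c^-$; since transient vertices meet no $c^0$, such an $\alpha$ is exactly an element of $F_E(c^0)$ with $r(\alpha)\in P_c^-$.

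The step I expect to be the main obstacle is termination of this recursion. Because $a$ is a finite combination of monomials, the set $W=\{u\in E^0\mid au\neq0\}$ is finite, and every vertex occurring along a surviving branch lies in $W$. If the process did not stop, a surviving branch would furnish an infinite path $v_0\to v_1\to\cdots$ through transient vertices with all $v_i\in W$; finiteness of $W$ forces a repetition $v_i=v_j$ with $i<j$, hence a closed path and therefore a cycle passing through a transient vertex. This contradicts the fact that every vertex on a cycle belongs to $P_c\cup P_e$. Thus only finitely many paths $\alpha$ arise, and each surviving branch genuinely reaches $P_c^-$, which delivers the claimed expansion for the contribution of the transient vertices of $V$.

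It remains to record the vertices $v\in V\cap P_c^-$: here $av=a\,v\,v^{*}$ with $r(v)=v\in P_c^-$, so reading the trivial path $v$ as the degenerate member of $F_E(c^0)$ that enters $c^0$ at once (this degenerate case is unavoidable already for a single loop, where $F_E(c^0)=\emptyset$ yet $a\neq0$), these contributions are of the required form as well. Assembling them with the transient contributions yields $a=\sum a\,\alpha\alpha^{*}$ with $\alpha\in F_E(c^0)$, $c$ a cycle without exits and $r(\alpha)\in P_c^-$, as stated; everything beyond the termination argument consists of the routine (CK2)-manipulations already used earlier in the paper.
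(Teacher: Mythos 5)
Your proposal is correct and follows essentially the same route as the paper's proof: expand via (CK2), let Proposition~\ref{Zgen} discard the $P_l$, $P_e$ and $P_c^+$ contributions, collect the $P_c^-$ terms, and obtain termination from the finiteness of $\{u\in E^0 \mid au\neq 0\}$ together with the fact that the vertices being expanded lie on no cycle (you argue by pigeonhole/repetition, the paper by noting all branch vertices are distinct --- the same finiteness argument in two guises). Your explicit treatment of the degenerate case $v\in P_c^-$ (the trivial path, unavoidable for a single loop) is a fair catch of an imprecision in the lemma's statement, and it matches how the paper actually uses the lemma in Theorem~\ref{HP}, where $\alpha\in F_E(c^0)\cup\{v\}$ is allowed.
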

\begin{proof} Let $u$ be in $E^0$ such that $au\neq 0$. 
Define 
$$T_1(u)=\{v\in T(u) \ \vert \ \exists \ e\in E^1,\ \text{such that}\ s(e)=u, r(e) =v\}.$$
Let $T_n(u)$ be
$$T_n(u)=\{v\in T(u)\setminus \left(\cup_{i< n} T_i(u)\right) \ \vert \ \exists \ \alpha\in {\rm Path (E)}, \ l(\alpha) = n,\ \text{such that}\ s(\alpha)=u, r(\alpha) =v\}.$$
\par

Given $0\neq au$, write $au= a\sum_{s^{-1}(u)=e} ee^*= \sum_{s^{-1}(u)=e} eae^*$ (note that $u$ cannot be a sink and so we may use (CK2)).
By Proposition \ref{Zgen}, the possibly nonzero summands are those $eae^*$ such that $r(e)\in T_1(u)\setminus (P_l \cup P_e\cup P_c^+)$.
Let $$S_1:=\{ aee^*\ \text{such that}\ r(e)\in P_c^-\}.$$
Now, to every element $aee^*$ with $r(e)\in T_1(u)\setminus (P_l \cup P_e\cup P_c)$, apply Condition (CK2). Then $aee^*= a(e\sum_{s^{-1}(r(e))=f} ff^*)e^*= (e\sum_{s^{-1}(r(e))=f} faf^*)e^*$. Again by Proposition \ref{Zgen} the nonzero summands are those $efaf^*e^*$ such that $r(f)\in T_2(u)\setminus (P_l \cup P_e\cup P_c^+)$.
Let $$S_2:=\{ aeff^*e^*\ \text{such that}\ r(f)\in P_c^-\}.$$

This reasoning must stop in a finite number of steps, say $m$, because otherwise we would have infinitely many edges $e_1, e_2\dots$ such that $ae_1\dotsc e_n\neq 0$ for every $n$. Since for every vertex $w$ in a cycle $aw=0$, the ranges of these paths are all different. Therefore, there would be infinitely many vertices not annihilating $a$, a contradiction. Note that, by construction, $a=\sum_{x\in \cup_{i=1}^mS_i} x$ and so $a$ can be written as in the statement, where $\alpha\in F_E(c^0)$ for some cycle without exits $c$.
\end{proof}
\bigskip

\begin{lemma}\label{multiplicacion}
Let $E$ be an arbitrary graph and consider   $c_u, d_w \in \mathcal{S}$, $\alpha \in {F_E(c_u^0)}$,  $\beta \in {F_E(d_w^0)}$ and  $n, m \in \mathbb Z$.  
\begin{enumerate}[\rm (i)]
\item If $w=u$, then
$ \alpha^*\beta \neq 0$ if and only if $\alpha =\beta$.
\item  If $w\neq u$ then $  c_u^m\alpha^*\beta d_w^n= 0$.
\end{enumerate}
\end{lemma}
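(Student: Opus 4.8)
The plan is to reduce everything to the elementary multiplication rule for terms of the form $\alpha^\ast\beta$ in a Leavitt path algebra, combined with the rigidity of cycles without exits. Recall from (CK1) that for paths $\alpha,\beta\in{\rm Path}(E)$ the product $\alpha^\ast\beta$ is nonzero if and only if one of them is an initial subpath (prefix) of the other, in which case $\alpha^\ast\beta$ equals the remaining suffix: a real path $\delta$ when $\beta=\alpha\delta$, or a ghost path $\gamma^\ast$ when $\alpha=\beta\gamma$. The second ingredient is that every vertex lying on a cycle without exits emits exactly one edge, namely the corresponding edge of the cycle; hence any path issuing from a vertex of $c^0$ never leaves $c^0$, so $T(v)=c^0$ for each $v\in c^0$, and two cycles without exits that share a vertex must coincide.

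For part (i) I would first observe that $w=u$ forces $c_u=d_w$. Indeed, $u$ lies on both cycles without exits and emits a unique edge, so inductively the two cycles based at $u$ have identical edges; thus the underlying cycles satisfy $c=d$ and $c_u^0=d_w^0=c^0$. Now $\alpha,\beta\in F_E(c^0)$ share the same hereditary target $c^0$, and the argument of Lemma \ref{multipillin}(i) applies: if $\alpha^\ast\beta\neq0$ then, say, $\beta=\alpha\delta$; were $\delta$ nontrivial, its source $r(\alpha)\in c^0$ would be an \emph{interior} vertex of $\beta$, contradicting $\beta\in F_E(c^0)$, whose interior vertices lie outside $c^0$. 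Hence $\delta$ is trivial and $\alpha=\beta$; the converse is immediate since $\alpha^\ast\alpha=r(\alpha)\neq0$.

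For part (ii) I would argue that the whole product vanishes by showing that, under the only circumstances in which the outer factors survive, the middle factor $\alpha^\ast\beta$ is already zero. Since $c_u^m$ and $d_w^n$ are closed at the single vertices $u$ and $w$ respectively for every $m,n\in\mathbb Z$ (including ghost powers), the idempotents $u$ and $w$ sit at the inner ends, so $c_u^m\alpha^\ast\beta d_w^n\neq0$ would force $r(\alpha)=u$ and $r(\beta)=w$. Assuming this, suppose for contradiction that $\alpha^\ast\beta\neq0$; then one of $\alpha,\beta$ is a prefix of the other, and since $r(\alpha)=u\neq w=r(\beta)$ the resulting suffix is a nontrivial path joining $u$ and $w$. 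That suffix starts at a vertex of a cycle without exits, so it stays inside that cycle's vertex set; this places a common vertex in $c^0\cap d^0$, forcing $c=d$, and then the starting vertex of the suffix, which is an interior vertex of the longer of $\alpha,\beta$, lies in $c^0=d^0$, contradicting the defining property of $F_E(c^0)=F_E(d^0)$. Therefore $\alpha^\ast\beta=0$ and the product is zero.

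The main obstacle I anticipate is bookkeeping rather than conceptual: one must treat the two a priori distinct cycles $c$ and $d$ symmetrically, handle the two prefix cases ($\beta=\alpha\delta$ and $\alpha=\beta\gamma$) without loss of generality, and keep straight which vertices are forced to be interior (hence outside the target set) versus terminal in the definition of $F_E$. The cleanest way to organize this is to isolate, as a preliminary remark, both the fact that a nontrivial path emanating from a cycle-without-exits vertex remains in that cycle and the statement that a shared vertex forces the two cycles to be equal; after that, both parts follow quickly.
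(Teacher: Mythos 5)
Your proposal is correct and takes essentially the same route as the paper's proof: both parts rest on the prefix analysis of $\alpha^\ast\beta$ coming from (CK1), the rigidity fact that a path starting at a vertex of a cycle without exits must stay on that cycle, and the definition of $F_E(\cdot)$, which forbids interior vertices of $\alpha$ or $\beta$ from lying in the cycle. The only cosmetic difference is in (ii), where the paper deduces from $\alpha^\ast\beta\neq 0$ that $c_u^0=d_w^0$ and $\alpha=\beta$ and then kills the product via $c_u^m d_w^n=0$ since $u\neq w$, whereas you first use the outer factors to force $r(\alpha)=u$ and $r(\beta)=w$ and then show $\alpha^\ast\beta=0$ outright; the preliminary observations you isolate (a shared vertex forces two cycles without exits to coincide) are exactly what the paper leaves implicit.
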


\begin{proof}
(i). If $\alpha^*\beta \ne 0$ then $\alpha=\beta \gamma$ or  $\beta=\alpha\delta$ for some $\gamma, \delta \in \text {Path}(E)$.
In the first case, since $r(\beta)\in c_u^0$, by the very definition of ${F_E(c_u^0)}$ and taking into account $\alpha \in {F_E(c_u^0)}$ we get $\alpha=\beta$. In the second case we get analogously the same conclusion.
The converse is obvious.
\par
(ii). Take $w \neq u$ and assume $\alpha^*\beta \ne 0$; this implies as before  $\alpha=\beta \gamma$ or  $\beta=\alpha\delta$ for some $\gamma, \delta \in \text {Path}(E)$. In the first case, $r(\beta)\in d_w^0$; since $d_w$ has no exits, then $\gamma^0\subseteq d_w^0$. On the other hand, $r(\alpha)=r(\gamma)\in c_u^0\cap d_w^0$, therefore $c_u^0= d_w^0$ and arguing as in (i) we conclude $\alpha=\beta$. This implies $  c_u^m\alpha^*\beta d_w^n=  c_u^md_w^n= 0$ since $w\neq u$.

The other possibility  yields the same conclusion.
\end{proof}

The following result will relate elements in the $n$ component of the center to elements in the 0 component.

\begin{lemma}\label{aa*} Let $E$ an arbitrary graph and $K$ any field.
Let $a$ be a  nonzero  element in $Z(L_{K}(E))$. Then $aa^*\neq 0.$
\end{lemma}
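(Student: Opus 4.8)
The plan is to reduce $a$ to an element of a very simple corner by means of the reduction Theorem~\ref{red}, to verify the claim there (where the involution is transparent), and then to transport the conclusion back to $a$ using that $a$ is central. Throughout I use that $\ast$ is the $K$-linear anti-automorphism of $L_K(E)$ fixing the vertices, so that $v^\ast=v$ for $v\in E^0$ and $(xy)^\ast=y^\ast x^\ast$. Since $a$ is not assumed homogeneous, I would invoke the full Theorem~\ref{red} rather than Corollary~\ref{redhom}, so both of its cases must be treated.

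First I would apply Theorem~\ref{red} to the nonzero element $a$, obtaining paths $\alpha,\beta\in\mathrm{Path}(E)$ such that $b:=\alpha^\ast a\beta$ is nonzero and has one of two forms: (i) $b=kv$ with $k\in K^\times$ and $v\in E^0$; or (ii) $b=p(c,c^\ast)$ with $c$ a cycle without exits and $0\neq p\in K[x,x^{-1}]$. The central claim is that in either case $bb^\ast\neq 0$. In case (i) this is immediate: $bb^\ast=(kv)(kv)^\ast=(kv)(kv)=k^2v\neq 0$, because $v^2=v$ and $k^2\neq 0$ in the field $K$. In case (ii) one works inside the corner $vL_K(E)v$ with $v=s(c)$, which is isomorphic to the commutative domain $K[x,x^{-1}]$ and on which $\ast$ restricts to the automorphism interchanging $c$ and $c^\ast$; thus $b^\ast\neq 0$ and $bb^\ast\neq 0$, since a product of two nonzero elements of a domain is nonzero.

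Finally I would transport the conclusion. Since $\ast$ is an anti-automorphism, $b^\ast=(\alpha^\ast a\beta)^\ast=\beta^\ast a^\ast\alpha$, and therefore
\[
bb^\ast=(\alpha^\ast a\beta)(\beta^\ast a^\ast\alpha)=\alpha^\ast a\,(\beta\beta^\ast)\,a^\ast\alpha .
\]
Here is the one place where centrality is essential: as $a\in Z(L_K(E))$ it commutes with $\beta\beta^\ast$, so $a(\beta\beta^\ast)a^\ast=(\beta\beta^\ast)(aa^\ast)$ and hence $bb^\ast=\alpha^\ast(\beta\beta^\ast)(aa^\ast)\alpha$. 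Since $bb^\ast\neq 0$, the factor $aa^\ast$ cannot vanish, which proves $aa^\ast\neq 0$. The only step requiring genuine care is the verification that $bb^\ast\neq 0$ in case (ii); this rests on the structural fact $vL_K(E)v\cong K[x,x^{-1}]$ for $v$ on a cycle without exits and on $\ast$ acting there as $c\mapsto c^\ast$, with everything else being formal manipulation of the involution.
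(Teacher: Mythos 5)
Your proof is correct and takes essentially the same route as the paper's: reduce via Theorem~\ref{red} to $b=\alpha^\ast a\beta$ of the form $kv$ or $p(c,c^\ast)$, verify $bb^\ast\neq 0$ in each case (in the cycle case using that the corner $r(c)L_K(E)r(c)$ is isomorphic to the domain $K[x,x^{-1}]$), and then use centrality of $a$ to rewrite $bb^\ast=\alpha^\ast\beta\beta^\ast aa^\ast\alpha$, forcing $aa^\ast\neq 0$. Your only addition is making explicit that $\ast$ preserves this corner and acts there by exchanging $c$ and $c^\ast$, a point the paper leaves implicit.
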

\begin{proof} By Theorem \ref{red}, there are $\alpha, \beta \in Path (E)$ such that $0\neq \alpha^{*} a \beta = kv$ for some $k\in K^\times$, $v\in E^{0}$, or  $0\neq \alpha^{*} a \beta = p(c,c^*)$, where $p(c,c^*)$ is a polynomial in a cycle without exits $c$.

In the first case $0\neq k^2 v = (kv)(kv^*)=\alpha^{*} a \beta \beta^* a^*\alpha = \alpha^{*} \beta \beta^* aa^*\alpha $, since $a$ is in the center of $L_K(E)$,  and so $aa^*$ must be nonzero.

In the second case, $0\neq p(c, c^*)p(c, c^*)^*$ since $p(c, c^*)\in r(c)L_K(E)r(c)$ which is isomorphic to the Laurent polinomial algebra $K[x, x^{-1}]$ (see \cite[Proposition 2.3]{ABS}), hence $0\neq  \alpha^{*} a \beta\beta^*a^*\alpha = \alpha^{*}  \beta\beta^*aa^*\alpha $, and so $aa^*$ must be nonzero.
\end{proof}

\begin{theorem}\label{HP}Let $E$ be a row-finite graph. Then, the set 
$$\mathcal{B}_n=\left\{\sum_{\substack {m \cdot  l(c)=n \\ \alpha \in F_E(c^0)\cup \{c^0\}\\ u\in c^0}}\alpha c_u^m \alpha^*\quad \vert \quad c \in {\mathcal S}\right\}$$
is a basis of $Z_n(L_K(E))$ with $n\in \mathbb{Z}\setminus\{0\}$.
\end{theorem}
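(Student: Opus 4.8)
The plan is to verify the three properties that make $\mathcal{B}_n$ a basis of $Z_n(L_K(E))$ --- that each of its elements is central and homogeneous of degree $n$, that it spans $Z_n(L_K(E))$, and that it is linearly independent --- and first to reduce everything to the case $n>0$. For the reduction, recall that the involution $*$ is a $K$-linear anti-automorphism of $L_K(E)$ that reverses degrees and preserves the center (if $a\in Z(L_K(E))$ then $a^*\in Z(L_K(E))$), so it restricts to a $K$-linear bijection $Z_n(L_K(E))\to Z_{-n}(L_K(E))$. Since $c_u$ is a unit of $uL_K(E)u\cong K[x,x^{-1}]$ with inverse $c_u^*$, one has $\left(\alpha c_u^m\alpha^*\right)^*=\alpha c_u^{-m}\alpha^*$, so applying $*$ to a generator of $\mathcal B_n$ yields the corresponding generator of $\mathcal B_{-n}$; hence $\mathcal B_{-n}=\{b^*\mid b\in\mathcal B_n\}$ and it suffices to treat $n>0$. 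I will also use throughout that for $c\in\mathcal S$ the set $c^0$ is hereditary (a cycle without exits satisfies $T(c^0)=c^0$) and contained in $P_c\subseteq P$, and that, once $m$ is taken to be the unique solution of $m\,l(c)=n$ (which exists only when $l(c)\mid n$, so $b_c=0$ and is excluded otherwise), the generator equals $\sum_{u\in c^0}c_u^m+\sum_{\alpha\in F_E(c^0)}\alpha c_{r(\alpha)}^m\alpha^*$, since only the summand with $u=r(\alpha)$ survives in $\alpha c_u^m\alpha^*$.

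Next I would show $\mathcal B_n\subseteq Z_n(L_K(E))$. Each term $\alpha c_u^m\alpha^*$ has degree $m\,l(c)=n$, so every $b_c$ is homogeneous of degree $n$. For centrality I would argue exactly as in the proof of Theorem \ref{gradocero}: it is enough to check that $b_c$ commutes with every vertex and every edge. Commutation with vertices is immediate because $b_c$ lies in $\oplus_s sL_K(E)s$ for the finite set of sources $s=s(\alpha)$. For an edge $e$ one sets $A:=c^0\cup F_E(c^0)$, proves the dichotomy that $r(e)\in A$ if and only if $s(e)\in A$ or $e\in F_E(c^0)$, and then computes $eb_c$ and $b_ce$ case by case, the cross terms vanishing precisely because an element of $F_E(c^0)$ cannot begin with an edge whose source lies outside $A$; the bookkeeping is controlled by the orthogonality relations of Lemma \ref{multiplicacion}. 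Alternatively, and more conceptually, one may observe that $b_c$ is the scalar central element of $I(c^0)\cong M_N(K[x,x^{-1}])$ and invoke Corollary \ref{centroideal} (valid since $c^0\subseteq P$) to pass from $Z(I(c^0))=I(c^0)\cap Z(L_K(E))$ to centrality in all of $L_K(E)$.

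For spanning, take a nonzero $a\in Z_n(L_K(E))$ with $n>0$. Combining Lemma \ref{elsalvador} with Proposition \ref{Zgen}, $a$ is a $K$-combination of terms $\alpha c_u^m\alpha^*$ with $u=r(\alpha)$ lying in $c^0\cap P_c^-$ for cycles without exits $c$ (allowing $\alpha$ of length $0$), where $au=k_u c_u^m$. The key bookkeeping is to check that the scalar $k_u$ is constant along each cycle: comparing $ae$ and $ea$ for a cycle edge $e$ from $u$ to $u'$ gives $k_u c_u^m e=k_{u'}c_u^m e$, whence $k_u=k_{u'}=:k_c$. Proposition \ref{Zgen} forces $k_c=0$ whenever some vertex of $c$ lies in $P_c^+$, so only cycles $c\in\mathcal S$ contribute; and for each surviving $c$ every $\alpha\in F_E(c^0)$ does occur with coefficient $k_c$, because $a\,s(\alpha)\neq 0$ follows from $a\alpha=\alpha\,a\,r(\alpha)=k_c\,\alpha c_u^m\neq 0$. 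Collecting terms yields $a=\sum_{c\in\mathcal S}k_c\,b_c$, so $\mathcal B_n$ spans $Z_n(L_K(E))$.

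Finally, for linear independence suppose $\sum_{c\in\mathcal S}k_c b_c=0$, fix $c_0\in\mathcal S$ and $u_0\in c_0^0$, and multiply on both sides by $u_0$. Distinct cycles without exits are vertex-disjoint and have disjoint trees, so there is no path from $u_0$ to $d^0$ when $d\neq c_0$; hence $u_0 b_d u_0=0$ for every $d\neq c_0$, whereas $u_0 b_{c_0}u_0=c_{u_0}^m$, the orthogonality relations of Lemma \ref{multiplicacion} guaranteeing that no cancellation occurs inside this last expression, so that $c_{u_0}^m\neq 0$. Thus $k_{c_0}c_{u_0}^m=0$ and $k_{c_0}=0$, proving independence. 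I expect the main obstacle to be the centrality verification in the second step, namely the edge-by-edge case analysis together with the need to reconcile the length-zero, cycle-internal contributions $\sum_{u\in c^0}c_u^m$ with the genuine $F_E(c^0)$ terms; the structural shortcut through Corollary \ref{centroideal} and the matrix description of $I(c^0)$ is the cleanest way around it. Lemma \ref{aa*}, guaranteeing $aa^*\neq 0$ for central $a$, provides an alternative organizing principle relating each degree-$n$ central element to the degree-$0$ basis of Theorem \ref{gradocero}, and can be used to cross-check the spanning argument.
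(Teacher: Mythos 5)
Your proposal is correct, and its skeleton (reduction to $n>0$ via the involution; Lemma \ref{elsalvador} together with Proposition \ref{Zgen} to pin down the shape of a homogeneous central element; the orthogonality relations of Lemma \ref{multiplicacion}) matches the paper's, but you handle two sub-steps by genuinely different means. For the fact that, once a cycle $c$ contributes, \emph{all} rotations $c_v$ and \emph{all} $\alpha\in F_E(c^0)$ occur with one common coefficient $k_c$, the paper computes $aa^*$, invokes Lemma \ref{aa*} to get $aa^*\neq 0$, and reads off the completeness of the class from the degree-zero basis of Theorem \ref{gradocero}; you instead compare $ae$ with $ea$ along cycle edges (via the rotation identity $c_u^m e=ec_{u'}^m$) and $a\alpha$ with $\alpha\,a\,r(\alpha)$. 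This is more elementary and decouples the degree-$n$ spanning argument from Theorem \ref{gradocero}; just make the final half-line explicit: from the decomposition of Lemma \ref{elsalvador} and Lemma \ref{multiplicacion} one extracts $a\alpha=k_\alpha\,\alpha c_{r(\alpha)}^m$, and comparing with $a\alpha=\alpha\,a\,r(\alpha)=k_c\,\alpha c_{r(\alpha)}^m\neq 0$ yields $k_\alpha=k_c$ --- the non-vanishing of $as(\alpha)$ alone does not identify the coefficient. For linear independence, cutting with a vertex $u_0\in c_0^0$ and using that cycles without exits are pairwise vertex-disjoint with $T(u_0)=c_0^0$ is simpler than the paper's left multiplication by $\beta^*$. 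Your structural shortcut for centrality --- identifying $b_c$ with the scalar central element $x^m$ of $I(c^0)\cong M_N(K[x,x^{-1}])$, with $N$ finite because $c\in\mathcal{S}$, and then applying Corollary \ref{centroideal} --- is sound and substantially shorter than the paper's page-long case analysis proving $A_n\beta=\beta A_n$; it does require tracing the isomorphism of \cite[Proposition 3.5]{AAPS} to see that $b_c$ really maps to the scalar matrix, which works because a path ending in $c^0$ that uses no edge of $c$ meets $c^0$ only at its range (as $c$ has no exits), so the matrix units are indexed exactly by $c^0\cup F_E(c^0)$.

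One small but genuine slip in the direct verification: you assert that it suffices to check that $b_c$ commutes with every vertex and every edge. Since $L_K(E)$ is generated by vertices, real edges \emph{and} ghost edges, and $b_c$ is not self-adjoint (unlike the elements $a_{[v]}$ of Theorem \ref{gradocero}, for which the edge check does suffice), commutation with $e^*$ is not automatic. The repair is already contained in your setup: establish $b_ce=eb_c$ for all real edges for every $n\in\mathbb{Z}\setminus\{0\}$, i.e.\ for both signs of $m$, and then apply the involution to $b_c^*\in\mathcal{B}_{-n}$ to obtain commutation with ghost edges; this is precisely how the paper closes its own proof, by deriving $\beta^*A_{-n}=A_{-n}\beta^*$ from $A_n\beta=\beta A_n$.
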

\begin{proof} We will assume $n>0$. The case $n<0$ follows by using the involution in the Leavitt path algebra.

Let $a \in Z_n(L_K(E))$. For every $u\in E^0$ such that $au\neq 0$, Lemma \ref{elsalvador} and Proposition  \ref{Zgen} imply

$$au=a\sum\alpha\alpha^{\ast}=\sum\alpha a r(\alpha)\alpha^{\ast}= \sum\alpha k_\alpha  c_{\alpha}^{m_{\alpha}}\alpha^{\ast}=\sum k_\alpha \alpha c_{\alpha}^{m_{\alpha}}\alpha^{\ast}$$
  
\noindent
where  $\alpha \in F_E(c^0)\cup \{u\},\ r(\alpha) \in P^-_c$ and $ s(\alpha)=u$.
 
Hence 

 \begin{equation}\label{expr}
 {a=\sum k_\alpha \alpha c_v^{m}\alpha^{\ast},  \text{where the $c_v$'s\ } \text{are elements of $\mathcal S, $ $\alpha \in F_E(c^0)\cup \{v\},\ r(\alpha) \in P^-_c$}.}
 \end{equation}
 
Now we see that 
$k_{\alpha}=k_{\beta}$ for every $\alpha, \beta \in F_E(c^0)$ appearing in the expresion before.

We first note that $\alpha c^r\beta^*$ is a nonzero element in $L_K(E)$ for every $r\in \mathbb N$. Since $a\alpha\beta^*=\alpha\beta^* a$, using Lemma \ref{multiplicacion} we get 
$k_\alpha \alpha c_u^m\beta^* = k_\beta \alpha c_u^m\beta^*$ and so $k_\alpha = k_\beta$.

In what follows we prove that if $c_u$ appears in ($\ref{expr}$) then for every $v\in c_u^0$ and for every $\beta\in F_E([c_u^0])$ we have that $c_v$ and $\beta c_v\beta^*$ also appear.

Apply  (\ref{expr}) and Lemmas \ref{aa*} and \ref{multipillin}  to get
 
$$0\neq aa^*=\left(\sum k_\alpha \alpha c_v^{m}\alpha^{\ast}\right)\left( \sum k_\alpha \alpha c_v^{-m}\alpha^{\ast}  \right) = \sum k_\alpha^2\alpha v\alpha^*.$$

By Theorem \ref{gradocero}, $v$ must appears in this summand and also every $\beta\in F_E([c_v^0])$.
This shows that the set $\mathcal{B}_n$ generates the $n$-component of the center. \par

In what follows we see that the elements of $\mathcal{B}_n$ are linearly independent. In fact, we show that elements of the form $\alpha c_u^m\alpha^*$ are linearly independent. To this end, let $k_\alpha \in K$ and suppose $ \sum k_\alpha \alpha c_u^{m_u}\alpha^* =0$, where all the summands are different. Choose a nonzero 
$\beta c_v^{m_v}\beta^*$. Then $0 =\beta^*\sum k_\alpha \alpha c_u^{m_u}\alpha^* =$ (by Lemma \ref{multiplicacion}) $\beta^*k_\beta \beta c_v^{m_v}\beta^* =  k_\beta c_v^{{m_v}}\beta^*$; this implies $k_\beta=0$
and our claim has been proved. 
\medskip

Finally we see that $\mathcal{B}_n\subseteq Z(L_K(E))$.
Fix $c_u	\in 	\mathcal{S}$ and denote by 
$A_n=\{\alpha c_u^m\alpha^* \ \vert\  m.l(c_u)=n,\ \alpha \in F_E([c_u^0])\cup \{u\}\}$, i.e., $A_n$ contains all the summands of $a$ in (\ref{expr}). We see that for every $\beta\in {\rm Path}(E)$, $A_n\beta = \beta A_n$ and $A_n\beta^\ast= \beta^\ast A_n$ for all $n\in \mathbb{Z}$; this will prove our statement. 

So, take $\alpha c_u^m\alpha^* \in A_n$. Then $\alpha c_u^m\alpha^* \beta$ is non zero if and only if $\beta=\alpha$ (by Lemma \ref{multiplicacion}); in this case, $\alpha c_u^m\alpha^* \beta=\beta c_u^m\in  \beta A_n$. Suppose $\alpha c_u^m\alpha^* \beta=0$; if $\beta \alpha c_u^m\alpha^*=0$ then we have shown that $0\in \beta A_n$;  otherwise $ \beta \alpha c_u^m\alpha^*\neq 0$; this implies $r(\beta)=s(\alpha)$. Now we distinguish two cases: first, $\alpha=u$. Note that $c_u^m\beta=0$ implies $\beta\neq c_u\gamma$, for any $\gamma\in {\rm Path}(E)$ and so $\beta c_u^m\beta^*\in A_n$.
Then $0=\beta\beta c_u^m\beta^*\in \beta A_n$ (because $r(\beta)=u$). If $\alpha$ is not a vertex, then 
$r(\beta)\neq u$  hence $0= \beta c_u^m\in \beta A_n$. 
 This shows $A_n\beta\subseteq \beta A_n$.
  \medskip
 
 For the converse, note that 
$\beta\alpha c_u^m\alpha^*$ is nonzero if and only if $\beta\alpha$ is a nonzero element in $F_E([c_u^0])$, for $w=s(\beta)$; in this case $\beta\alpha c_u^m\alpha^*\beta^*\in A_n$ and so $\beta\alpha c_u^m\alpha^*= \beta\alpha c_u^m\alpha^*\beta^*\beta \in A_n\beta$. Now, suppose $\beta\alpha c_u^m\alpha^*=0$. Then, multiplying on the right hand side by $\alpha{(c_u^m)}^*$ and on the left hand side by $\beta^*$ we get $r(\beta)\alpha=0$. If $c_u^m\beta =0$ we have shown $0=c_u^m\beta \in A_n\beta$. If $c_u^m\beta\neq 0$, as $c_u$ has no exits, then $\beta=c_u^s\lambda$, where $\lambda$ is such that $c_u$ starts by $\lambda$. If $l(c_u)=1$, then $\beta =c_u^s$ for some $s\in \mathbb{N}$; we see that this implies $\alpha c_u^m\alpha^*\beta=0$. Suppose otherwise $\alpha c_u^m\alpha^*\beta\neq 0$. Then $\alpha^*\beta\neq0$ and so $\alpha \geq \beta$ or $\beta \geq \alpha$. The first case is not possible (note that neither  $c_u^1\subseteq \alpha^1$ nor $\alpha=u$), hence $\beta \geq \alpha$, implying $s(\alpha)=u$. This happens only when $\alpha=u$, but this is not possible as $0=\beta\alpha c_u^m\alpha^*=c_u^{s+m}$ is a contradiction. Now, suppose $l(c) >1$. Let $e$ be the last edge in $c_u$. Then $ec_u^me^*\beta$, which is an element in $A_n\beta$, is zero (because the first edge of $\beta$ is the first one of $c_u$, which is not $e$). This proves $\beta A_n\subseteq A_n\beta$ and therefore $A_n\beta= \beta A_n$. Applying the involution we have $\beta^*A_{-n}=A_{-n}\beta^*$ for all $n\in \mathbb Z$, as required.
\end{proof}

\begin{definitions}\label{bifurcation}
Let $E$ be an arbitrary graph. Denote by $P_{b^\infty}$ the set of all vertices in $E$ such that $T(v)$ has infinite bifurcations. Define 
$$ H_f:= \bigsqcup_{[v]\in X_f}\overline{[v]}\quad \text{and} \quad H_\infty:= \left(\bigsqcup_{[v]\in X_\infty}\overline{[v]}\right) \cup P_{b^\infty}.$$
\end{definitions}

\begin{proposition}\label{sumadirecta} Let $E$ be a row-finite graph and $K$ be any field. Then 
$$L_K(E)= I(H_f)\oplus I(H_\infty).$$
\end{proposition}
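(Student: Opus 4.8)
The plan is to present $H_f$ and $H_\infty$ as disjoint hereditary subsets of $E^0$ whose union has saturated closure equal to $E^0$, and then to let Proposition \ref{sumher} do the work: granting disjointness and hereditariness, that proposition gives
\[
I(H_f)\oplus I(H_\infty)=I(H_f\cup H_\infty)=I\!\left(\overline{H_f\cup H_\infty}\right),
\]
so everything comes down to the identity $\overline{H_f\cup H_\infty}=E^0$, which turns the last term into $I(E^0)=L_K(E)$.

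The hereditariness and disjointness are the routine part. Every class $[v]$—which, via clause (ii) of the defining relation $\sim^1$, also contains the cycle-with-exit vertices that connect its members—is hereditary by Lemma \ref{disyuncion}(i), so each saturated closure $\overline{[v]}$ is hereditary; by Lemma \ref{disyuncion}(ii) inequivalent classes have disjoint closures. Since $X=X_f\sqcup X_\infty$, this makes $H_f=\bigsqcup_{[v]\in X_f}\overline{[v]}$ and the class-part of $H_\infty$ disjoint hereditary sets; adjoining the hereditary set $P_{b^\infty}$ keeps $H_\infty$ hereditary. To finish disjointness one checks $H_f\cap P_{b^\infty}=\emptyset$: a vertex $u\in\overline{[v]}$ with $[v]\in X_f$ satisfies $T(u)\subseteq\overline{[v]}$ by hereditariness, and $\overline{[v]}$ is finite by clause (i) of the definition of $P_f$, so $T(u)$ carries only finitely many bifurcations and $u\notin P_{b^\infty}$.

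The crux, and the step I expect to be the main obstacle, is the covering identity $\overline{H_f\cup H_\infty}=E^0$. I would prove it by a dichotomy on a vertex $w$, using row-finiteness. If $T(w)$ carries infinitely many bifurcations, then $w\in P_{b^\infty}\subseteq H_\infty$. If $T(w)$ has only finitely many bifurcations, then pruning $T(w)$ along its finitely many branches forces each branch to terminate at a line point, a cycle without exits, or an extreme cycle; all these terminal vertices lie in $P$, and an upward saturation argument in the spirit of Lemma \ref{caminillos}—run from the terminal vertices back towards $w$—should place $w$ in $\overline{H_f\cup H_\infty}$. The delicate point is exactly this upward pass: membership of $w$ in $\overline{H_f\cup H_\infty}$ is strictly stronger than the mere connectivity of $w$ to $P$ supplied by Proposition \ref{density}, and it is here that finiteness of the bifurcation set of $T(w)$, together with row-finiteness, must be used to guarantee that the saturation actually climbs all the way up to $w$. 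Once this is in place, Proposition \ref{sumher} yields $L_K(E)=I(H_f)\oplus I(H_\infty)$.
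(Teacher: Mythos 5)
Your proposal is correct and takes essentially the same route as the paper: the paper likewise feeds the disjoint hereditary sets $H_f$ and $H_\infty$ into Proposition \ref{sumher}, after first showing that every vertex of $E^0$ lies in $I(H_f\cup H_\infty)$. The ``delicate upward pass'' you flag is handled in the paper by exactly the mechanism you name --- a repeated (CK2)-expansion of a vertex $v\notin H_f\cup H_\infty$, with vertices lying on cycles swept into classes via clause (ii) of $\sim^1$, the process terminating because otherwise $T(v)$ would carry infinitely many bifurcations and hence $v\in P_{b^\infty}\subseteq H_\infty$, which is precisely your bifurcation dichotomy.
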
 
\begin{proof} We will show $v\in  I(H_f \cup H_\infty)$ for every $v\in E^0$. If $v\in H_f\cup H_\infty$ we have finished. Suppose that this is not the case. In particular, this means $s^{-1}(v)\neq \emptyset$. Write $v=\sum_{e\in s^{-1}(v)}ee^*$. If for every $e$ in this sum $r(e)\in H_f\cup H_\infty$, then we have finished. If for some $e$, $r(e)$ is a sink, we have finished; if $r(e)$ is a vertex in a cycle, taking into account that every vertex of $E^0$ connects to $H_f\cup H_\infty$ then $r(e)\sim u$ for some $u\in H_f\cup H_\infty$. Note that $u\notin P_{b^\infty}$ as otherwise $v\in P_{b^\infty}$, and we are assuming $v\notin H_f\cup H_\infty$, therefore $r(e)\in \cup_{w\in X}[w]$. For those $e\in s^{-1}(v)$ in the remaining cases we apply again Condition (CK2) to $u_1:=r(e)$ and write $u_1=\sum_{f\in s^{-1}(u_1)}ff^*$. Now we proceed in the same way concerning $r(f)$. This process must stop because otherwise there would be infinitely many bifurcations and so $v\in P_{b^\infty}$, a contradiction.

Apply Proposition \ref{sumher} to the disjoint hereditary subsets $H_f\cup H_\infty$ to get the result.
\end{proof}
\bigskip

\begin{theorem}\label{EstructuraCentroFinito}{\rm {\bf(Structure Theorem for the center of a row-finite graph).}}
Let $E$ be a row-finite graph. Then:
$$Z(L_K(E))\cong K^{\vert{X}\setminus {X}^c_f\vert} \oplus K[x, x^{-1}]^{\vert{X}^c_f\vert}$$
More concretely,

$$Z(L_K(E))\cong K^{\vert{X}^l_f\vert} \oplus K^{\vert {X}^{ec}_f\vert}\oplus K[x, x^{-1}]^{\vert{X}^c_f\vert}.$$
\end{theorem}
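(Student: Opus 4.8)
The plan is to exploit the $\mathbb{Z}$-grading of the center. By Lemma \ref{centrograd} we have $Z(L_K(E))=\bigoplus_{n\in\mathbb{Z}}Z_n(L_K(E))$ as a graded subalgebra, and Theorems \ref{gradocero} and \ref{HP} already furnish explicit $K$-bases $\mathcal{B}_0$ and $\mathcal{B}_n$ ($n\neq 0$) of the homogeneous components. Hence the hard linear-algebra content is done; what remains is to upgrade this vector-space description to an isomorphism of algebras, i.e. to read off the multiplicative structure carried by these bases. I would therefore organise the entire center around the family of central idempotents $\{a_{[v]}\}_{[v]\in X_f}$ supplied by $\mathcal{B}_0$.

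First I would observe that each $a_{[v]}$ is the identity element of the ideal $I(\overline{[v]})$: via the isomorphism $\varphi\colon L_K({}_{\overline{[v]}}E)\to I(\overline{[v]})$ of Lemma \ref{idhersetLPA}, and using that $\overline{[v]}$ and $F_E(\overline{[v]})$ are finite for $[v]\in X_f$, the element $a_{[v]}=\sum_{u\in\overline{[v]}}u+\sum_{\alpha\in F_E(\overline{[v]})}\alpha\alpha^{*}$ is exactly $\varphi(1)$, so it is idempotent and acts as the identity on $I(\overline{[v]})$. Orthogonality is then immediate: for $[u]\neq[v]$ Lemma \ref{disyuncion}(ii) gives $\overline{[u]}\cap\overline{[v]}=\emptyset$, whence $I(\overline{[u]})\cap I(\overline{[v]})=0$ by Proposition \ref{sumher}, and $a_{[u]}a_{[v]}$ lies in this intersection. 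Since every basis element of $\mathcal{B}_0$ and of each $\mathcal{B}_n$ is built from vertices and paths ending in a single set $\overline{[v]}$, it is supported on exactly one class; consequently $Z(L_K(E))=\bigoplus_{[v]\in X_f}a_{[v]}Z(L_K(E))$ splits as a direct sum of the corners $a_{[v]}Z(L_K(E))$.

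Next I would compute each corner. If $[v]\in X_f^l\cup X_f^{ec}$, no basis element of nonzero degree is attached to $[v]$, since the index set of $\mathcal{B}_n$ in Theorem \ref{HP} is $\mathcal{S}$, consisting only of cycles without exits (this is ultimately Proposition \ref{Zgen}, which kills the higher-degree center on line points and on $P_e$). Hence such a corner is just $K\,a_{[v]}\cong K$. If instead $[v]\in X_f^c$, its underlying cycle $c$ has no exits, so $\overline{[v]}=c^{0}$, and the finiteness conditions defining $X_f$ match these classes exactly with the underlying cycles of $\mathcal{S}$, each counted once because the element of $\mathcal{B}_n$ is symmetrised over the base points $u\in c^{0}$. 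Writing $z^{m}$ for the degree-$m\,l(c)$ basis element $\sum\alpha c_u^{m}\alpha^{*}$ (so $z^{0}=a_{[v]}$), a short computation with Lemma \ref{multiplicacion} — where $\alpha^{*}\beta$ survives only for $\alpha=\beta$ with matching base point — yields $z^{m}z^{m'}=z^{m+m'}$. Thus $m\mapsto z^{m}$ is an isomorphism of the corner onto $K[x,x^{-1}]$.

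Summing the corners over $X_f=X_f^l\sqcup X_f^c\sqcup X_f^{ec}$ then gives
$$Z(L_K(E))\cong K^{|X_f^l|}\oplus K^{|X_f^{ec}|}\oplus K[x,x^{-1}]^{|X_f^c|},$$
and the first, more compact, isomorphism follows by merging the two families of one-dimensional summands. I expect the main obstacle to lie in the cycle corner: verifying the multiplicativity $z^{m}z^{m'}=z^{m+m'}$ rigorously (so that one genuinely obtains $K[x,x^{-1}]$ and not merely an infinite-dimensional space), and carefully reconciling the base-pointed elements of $\mathcal{S}$ appearing in Notation \ref{notati} with the unbased classes of $X_f^c$, so that each copy of $K[x,x^{-1}]$ is counted precisely once.
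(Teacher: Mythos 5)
Your proposal is correct in substance, but it takes a genuinely different (and in one respect more complete) route than the paper. The paper works from the outside in: it first decomposes the whole algebra, $L_K(E)=\bigl(\bigoplus_{[v]\in X_f}I(\overline{[v]})\bigr)\oplus I(H_\infty)$, via Proposition \ref{sumadirecta} — which is precisely where row-finiteness and the auxiliary set $P_{b^\infty}$ enter — together with Proposition \ref{sumher}; it then applies Remark \ref{centrosumasdirectas} to split the center along this decomposition, uses Theorems \ref{gradocero} and \ref{HP} to see that $Z(L_K(E))\subseteq\bigoplus_{[v]\in X_f}Z(I(\overline{[v]}))$ (so the $H_\infty$ summand contributes nothing), and finally identifies each $Z(I(\overline{[v]}))$ with $K$ or $K[x,x^{-1}]$ by expanding its elements in the bases. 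You instead work from the inside out: you never decompose $L_K(E)$, and hence never need Proposition \ref{sumadirecta}, $H_\infty$ or $P_{b^\infty}$; the basis theorems already confine the center to the classes in $X_f$, and you split $Z(L_K(E))$ directly by the orthogonal central idempotents $a_{[v]}$. Your route buys two things the paper leaves implicit: a clean structural reason why $a_{[v]}$ is an idempotent acting as the identity on $I(\overline{[v]})$ (namely $a_{[v]}=\varphi(1)$ under Lemma \ref{idhersetLPA}, using finiteness of $\overline{[v]}$ and $F_E(\overline{[v]})$ — the paper never records this), and an explicit verification of multiplicativity $z^{m}z^{m'}=z^{m+m'}$ via Lemma \ref{multiplicacion}; the paper simply declares the assignment $a_{[v]}\mapsto 1$, $\sum\alpha c_u^m\alpha^*\mapsto x^m$ to be a $K$-algebra isomorphism without checking that products of basis elements behave as they must. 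What the paper's route buys in exchange is the decomposition of $L_K(E)$ itself (Proposition \ref{sumadirecta}), which is of independent interest beyond the center.

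One concrete inaccuracy to repair: your parenthetical claim that $[v]\in X_f^c$ forces $\overline{[v]}=c^0$ is not literally true. A vertex $w$ with a single edge $f$ into $c$ and no bifurcations in its tree satisfies $w\sim^1 v$ by condition (i) of Definitions \ref{classextendido}, so $\overline{[v]}$ may properly contain $c^0$. What saves your corner computation is that (CK2) collapses the extra terms — e.g. $w=ff^*$ — so that $a_{[v]}$ and $z^0=\sum_{u\in c^0}u+\sum_{\alpha\in F_E(c^0)}\alpha\alpha^*$ coincide as elements of $L_K(E)$, and the corner is still $\mathrm{span}\{z^m\,\vert\, m\in\mathbb Z\}\cong K[x,x^{-1}]$. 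You correctly flag the reconciliation of the base-pointed set $\mathcal{S}$ with the classes in $X_f^c$ as the delicate point; note that the paper's own proof is no more detailed there, so this caveat does not put your argument behind the paper's — it is exactly the point where both treatments rely on the basis theorems having matched each $K[x,x^{-1}]$-class with a single cycle of $\mathcal{S}$.
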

\begin{proof}
By Proposition \ref{sumadirecta},  $L_K(E)=I(H_f) \oplus I(H_\infty)$. Since  $H_f:= \sqcup_{[v]\in X_f}\overline{[v]}$ and $ H_\infty:= \left(\sqcup_{[v]\in X_\infty}\overline{[v]}\right) \cup P_{b^\infty}$, Proposition  \ref{sumher} implies 
$$L_K(E)=\left( \bigoplus_{[v]\in X_f}I(\overline{[v]})\right) \bigoplus I(H_\infty),$$

\noindent
hence $$Z(L_K(E))=\left( \bigoplus_{[v]\in X_f}Z(I(\overline{[v]})\right) \bigoplus Z(I(H_\infty)).$$

By Theorems  \ref{gradocero} and \ref{HP} we know $Z(L_K(E))\subseteq  \bigoplus_{[v]\in X_f}Z(I(\overline{[v]})$); therefore

 $$Z(L_K(E))= \bigoplus_{[v]\in X_f}Z(I(\overline{[v]}).$$

 We claim that $Z(I(\overline{[v]}))$ is isomorphic to $K$ if $v\in P_l\cup P_{ec}$ or isomorphic to $K[x, x^{-1}]$ in case $v\in P_c$.

Indeed, take $x\in Z(I(\overline{[v]}))$. By the theorems of the basis of the center we may write $x=\sum_{w}x_w$, where $[w]\in X_f$ and 
$$x_w=k_{w, 0}\ a_{[w]}+ \sum_{n}k_{w, n}\left(\sum_{{\substack {m \cdot  l(c_w)=n \\ \alpha \in F_E(c_w^0)\cup \{c_w^0\}\\ u\in c_w^0}}}\alpha c_u^m \alpha^*\right),$$

\noindent
 where $a_{[w]}$ is as in Theorem \ref{gradocero}, $k_{w, 0},\ k_{w, n} \in K$ and $k_{w, n}$ is zero for almost every $n\in \mathbb{Z}$ and $w\in P_c$. Note that  $x_w\in I(\overline{[w]})$ and that $\{I(\overline{[w]})\}_{[w]\in X_f}$ is an independent set (in the sense that its sum is direct). This means that, necessarily, $x=x_v$. 

If $v\in P_l\cup P_{ec}$, then $x=k_v^0a_{[v]}$ and we have an isomorphism of $K$-algebras spaces between $Z(I(\overline{[v]}))$ and $K$. If $v\in P_c$ then the following map:
$$a_{[v]}\mapsto 1 \quad \text{and} \quad
\sum_{\substack {m \cdot  l(c_w)=n \\ \alpha \in F_E(c_w^0)\cup \{c_w^0\}\\ u\in c_w^0}}\alpha c_u^m \alpha^* \mapsto x^m$$

\noindent
describes an isomorphism of $K$-algebras
between $Z(I(\overline{[v]}))$ and $K[x, x^{-1}]$. This finishes the proof.
\end{proof}  
\bigskip

\section*{Acknowledgments}
The authors would like to thank Gene Abrams and Pere Ara for
useful discussions.
\medskip

All the authors have been partially supported by the Spanish MEC and Fondos FEDER through project MTM2010-15223,  by the Junta de Andaluc\'{\i}a and Fondos FEDER, jointly, through projects FQM-336 and FQM-3737. The first and last authors have been partially supported also by the  programa de becas para estudios doctorales y postdoctorales SENACYT-IFARHU, contrato no. 270-2008-407,  Gobierno de Panam\'a and by the University of Panam\'a. This work was done
during research stays of the first and last author in the University of M\'alaga. Both
authors would like to thank the host center for its hospitality and support.


\medskip

\end{document}